\theoremstyle{definition}
\newtheorem{theorem}{Theorem}[section]
\newtheorem{prop}[theorem]{Proposition}
\newtheorem{lemma}[theorem]{Lemma}
\newtheorem{cor}[theorem]{Corollary}
\newtheorem{rem}[theorem]{Remark}
\newtheorem{definition}[theorem]{Definition}
\newtheorem{example}[theorem]{Example}
\newtheorem{setting}[theorem]{Setting}
\newtheorem{notation}[theorem]{Notation}
\newcounter{res}
\newtheorem{result}[res]{Main Result}
\numberwithin{equation}{section}
\newcommand{\1}{\mbox{1}\hspace{-0.25em}\mbox{l}}
\DeclareMathOperator*{\rs}{\widehat{\mathbb{C}}}
\DeclareMathOperator*{\nn}{\mathbb{N}}
\DeclareMathOperator*{\rat}{\mathrm{Rat}}
\DeclareMathOperator*{\poly}{\mathrm{Poly}}
\DeclareMathOperator*{\cm}{\mathrm{CM}}
\DeclareMathOperator*{\ocm}{\mathrm{OCM}}
\DeclareMathOperator{\supp}{supp}
\DeclareMathOperator{\diam}{diam}
\title{Non-i.i.d.\ random holomorphic dynamical systems
 and the probability of tending to infinity
\footnote{Date: 23 October 2018. 
Published  in  Nonlinearity 32 (2019) No. 10, 3742-3771. 
\  MSC: 37F10, 37H10. 
Keyword: random holomorphic dynamical systems, Markov random dynamical systems, randomness-induced phenomenon, noise-induced phenomenon, cooperation principle}
}
\date{\empty}
\author{Hiroki Sumi\\
Course of Mathematical Science, Department of Human Coexistence,\\
 Graduate School of Human and Environmental Studies, Kyoto University\\
  Yoshida Nihonmatsu-cho, Sakyo-ku, Kyoto, 606-8501, Japan\\
E-mail: sumi@math.h.kyoto-u.ac.jp\\
\href{http://www.math.h.kyoto-u.ac.jp/~sumi/index.html}{http://www.math.h.kyoto-u.ac.jp/$\sim$sumi/index.html}\\  \\
Takayuki Watanabe (corresponding author)\\
Course of Mathematical Science,  Department of Human Coexistence,\\
 Graduate School of Human and Environmental Studies, Kyoto University\\
  Yoshida Nihonmatsu-cho, Sakyo-ku, Kyoto, 606-8501, Japan\\
E-mail: watanabe.takayuki.43c@st.kyoto-u.ac.jp}
\begin{document}
\maketitle
\begin{abstract}
We consider random holomorphic dynamical systems on the Riemann sphere whose choices of maps are related to Markov chains. Our motivation is to generalize the facts which hold in i.i.d. random holomorphic dynamical systems.  In particular, we focus on the function $\mathbb{T}_{\infty, \tau}$ which represents the probability of tending to infinity. We show some sufficient conditions which make $\mathbb{T}_{\infty, \tau}$ continuous on the whole space and we characterize the Julia sets in terms of the function $\mathbb{T}_{\infty, \tau}$  under certain assumptions.  
\end{abstract}
\setcounter{subsection}{0}
\section{Introduction}
\subsection{Background}
We consider discrete-time random dynamical systems \if0 on the Riemann sphere  $\rs$ \fi which are {\bf not i.i.d.}
The theory of random dynamics is rapidly growing both theoretically and experimentally. 
We focus in this paper on random {\bf holomorphic} dynamical systems on the Riemann sphere $\rs$ from the mathematical viewpoint. 
Using complex analysis, we can investigate the systems deeply.  

The first study of random holomorphic dynamics was given by Fornaess and Sibony \cite{fs91}.
They investigated  independent and identically-distributed (i.i.d.) random dynamical systems on $\rs$ constructed by small perturbations $\{ f_c \}_{c \in B(c_0, \delta)}$ of a rational map $f_{c_0} \colon \rs \to \rs$ which depend holomorphically on the parameter $c$, where $B(c_0, \delta)$ denotes the open ball with center $c_0$ and radius $\delta$ endowed with the normalized Lebesgue measure.  
They showed that, if $\delta$ is small and $f_{c_0}$ has $k \geq 1$ attractive cycles $\gamma_1, \dots, \gamma_k$,
then there exist $k$ continuous functions $T_{\gamma_j} \colon \rs \to [0,1]\, (j =1, \dots, k)$ with the following properties:
(i) $\sum_{j =1}^k  T_{\gamma_j}(z) =1$ for all $z \in \rs$, and
(ii) for all $z \in \rs$, the random orbit $f_{c_N}\circ  \cdots \circ f_{c_2}\circ f_{c_1} (z)$ tends to the attractive  basin of $\gamma_j$  as $N \to \infty$ with probability  $T_{\gamma_j} (z)$.
(See \cite[Theorem 0.1]{fs91}.)

In \cite{sumi11} the first author generalized this theorem to the case where noise is  not small and 
 deeply analyzed  the function $T_{A}$ which represents the probability of tending to an attracting minimal set $A$. 
These results are called the Cooperation Principles. 
His strategy is to consider both random dynamics of rational maps and dynamics of rational semigroups, which are semigroups of non-constant rational maps on $\rs$ where the semigroup operation is functional composition.
For details on  rational semigroups, see \cite{hm}, \cite{stank12}, \cite{sumi00}. 

The first author introduced the random relaxed Newton's method in \cite{sumi17} and suggested that the random relaxed Newton method might be a more useful method to compute the roots of polynomials than the classical deterministic Newton's method.
The key is that sufficiently large noise collapses bad attractors and makes the system more stable.

These works find new  phenomena which cannot hold in deterministic dynamics.
The phenomena are called {\bf noise-induced phenomena} or {\bf randomness-induced phenomena}, which are of great interest from the mathematical viewpoint. 
For more research on random holomorphic dynamical systems and related fields, see \cite{bb03}, \cite{com11}, \cite{js15}, \cite{js17}, \cite{msu}, \cite{mu18}, \cite{sumi00}, \cite{sumi11}, \cite{sumi17},  \cite{uz16}.

However, most of the previous studies concern i.i.d.\ random dynamical systems.
It is very natural  to generalize the settings and consider non-i.i.d random dynamical systems.
In this paper, we especially treat {\bf random dynamical systems with  ``Markovian rules'' whose noise depends on the past}.  

We extend the theory of i.i.d.\ random dynamical systems and {\bf we find new (noise-induced) phenomena which cannot hold in  i.i.d.\ random dynamical systems}. 
Moreover, our studies may be applied to the skew products whose base dynamical systems have Markov partitions.

We believe that this research will contribute not only toward mathematics but also toward applications to the real world.
One motivation for studying dynamical systems is to analyze mathematical models used in the natural or social sciences.  
Since the environment changes  randomly, 
it is natural to investigate random dynamical systems which describe the time evolution of systems with probabilistic terms.  
In this sense, it is quite important to understand ``Markovian'' noise because there are a lot of systems whose noise depends on the past.

Therefore the study of Markov random dynamical systems is natural and meaningful from both the pure and applied mathematical viewpoint. 
In this paper we aim to generalize the theory of i.i.d.\ random holomorphic dynamical systems and  the theory of rational semigroups simultaneously to the setting of random dynamical systems with Markovian rules and the associated set-valued dynamical systems. 

It is essentially new to consider the set-valued dynamical systems with Markovian rules itself, which we call graph directed Markov systems.
Although our concept  is similar to that of \cite{mu03}, these are completely different. 
In \cite{mu03} Mauldin and Urba\'nski are concerned with  the limit sets of systems of contracting maps, but in this paper we discuss the Julia sets of general continuous maps and clarify the relationship between the Julia sets of rational semigroups and that of graph directed Markov systems.

\subsection{Main results}

We now introduce our rigorous settings and present our main results.
Let $\rat$ be the space of non-constant holomorphic maps on $\rs$ and let $m \in \nn$.
We endow $\rat$ with the distance $\kappa$ defined by $\kappa ( f, g) := \sup_{z \in \rs} d(f(z), g(z))$ where $d$ denotes the spherical distance on $\rs$.
Suppose that $m^2$ Borel measures $ (\tau_{ij})_{i, j =1, \dots, m}$ on $\rat$ satisfy  $ \sum_{ j =1}^m \tau_{i j}(\rat) = 1$ for all $i =1, \dots, m$.
For the given  $\tau = (\tau_{ij})_{i, j =1, \dots, m}$, we consider the Markov chain on $\rs \times \{1, \dots, m\}$ whose transition probability from $(z, i ) \in \rs \times \{1, \dots, m\}$ to $ B \times \{ j \}$ is 
$$\mathbb{P} \left( (z,i), B \times \{j\} \right) = \tau_{ij}(\{ f \in \rat  ; f(z) \in B\}) $$
where $B$ is a Borel subset of $\rs $  and $ j \in  \{1, \dots, m\}$.
This system is called the {\it rational Markov random dynamical system} (rational MRDS for short) induced by $\tau$ .
For the rest of this subsection, we consider such systems.

Roughly speaking, the MRDS induced by $\tau = (\tau_{ij})_{i, j =1, \dots, m}$ describes the following random dynamical system on the phase space $\hat{\Bbb{C}}.$
Fix an initial point $z_{0} \in \rs$ and choose a vertex $i = 1, \dots, m$ (with some probability if we like). 
We choose a vertex $i_{1} = 1, \dots, m$ with probability $\tau_{i i_{1} }(\rat)$ and choose a map $f_{1}$ according to the probability distribution $ \tau_{i i_{1}} / \tau_{i i_{1} }(\rat).$ 
Repeating this, we randomly choose a vertex $i_{n}$ and a map $f_{n}$ for each $n$-th step. 
We in this paper investigate the asymptotic behavior of random orbits of the form $f_{n} \circ \dots \circ f_{2} \circ f_{1} (z_{0}).$ 

In  particular, we can apply functional analytical method by extending the phase space from $\rs$ to $\rs \times \{ 1, \dots, m \}$. 
More precisely, we consider iterations of a single transition operator, and it enables us  
to analyze the MRDS and the above random dynamical system deeply 
(see Section \ref{setting_sec}).  

Define the vertex set as $V:= \{ 1,2, \dots , m\} $ and the directed edge set as $$E :=\{ (i ,j) \in V \times V ; \, \tau _{ij}(\rat)  > 0 \}. $$
We set $S_{\tau} :=(V,E,(\supp \tau_e)_{e \in E})$ and utilize the terminology of directed graphs following \cite{mu03}.
We define $i : E \to V$ (resp.\ $t : E \to V$) as the projection to the  first (resp.\ second) coordinate and we call $i(e)$ (resp.\ $t(e)$) the initial (resp.\ terminal) vertex of $e \in E$.

A word  $e = (e_1,e_2, \dots, e_N) \in E^N$ with length $N \in \nn $ is said to be {\it admissible} if $t(e_n) = i(e_{n+1})$ for all  $n =1,2, \dots , N-1$. 
For this word $e$, we call $i(e_1)$ (resp. $t(e_N)$) the initial (resp. terminal) vertex of $e$ and we denote it by $i(e)$ (resp. $t(e)$). 
For each $i, j \in V$,  we define the following sets.
\begin{align*}
H_i^j(S) &:= \{f_{N} \circ  \dots  \circ f_{1};\, \exists N \in \nn, \exists e=(e_1, \dots, e_N) \in E^N \text{ s.t. }\\
 &     f_{n} \in \supp \tau_{e_n} (\forall n =1, \dots, N) \text{ and } e \text{ is admissible with } i(e)=i, t(e)=j\}, \\
	J_i (S_\tau)&:= \{ z \in \rs ;\bigcup_{j \in V} H_i^j(S_\tau) \text{ is not equicontinuous on any neighborhood of } z\}, \\
	J_{\ker,i} (S_\tau)&:=  \bigcap_{ j \in V :  H_i^j(S_\tau) \neq \emptyset} \bigcap_{h \in H_i^j(S) } h^{-1} (J_j(S)).
\end{align*}

The compact set $J_i (S_\tau)$ is called the Julia set at $i \in V$, which is the set of all initial points where the dynamical system sensitively depends on initial  conditions.
The subset $J_{\ker,i} (S_\tau)$ is called the kernel Julia set at $i \in V$.

To present our main results, we introduce the following Borel probability measures $\tilde{\tau}_i$ on $(\rat  \times E )^{\nn}$. 

\begin{definition}
We define Borel probability measures $\tilde{\tau}_i\, (i =1, \dots, m)$  on $(\rat  \times E )^{\nn}$ by 
 \begin{align*}
 &\tilde{\tau}_i \left(A'_1 \times \cdots \times A'_N \times \prod_{N+1}^{\infty} (\rat  \times E) \right) \\
 =& \begin{cases}
     \tau_{e_1}(A_1) \cdots \tau_{e_N}(A_N), & \text{if } (e_1, \dots , e_N)   \text{ is admissible  with }  i(e_1)=i \\
     0, & \text{otherwise}
   \end{cases}
   \end{align*}
for $N$ Borel sets $A_n \,(n=1,\dots,N)$ of  $\rat $ and for  $(e_1, \dots , e_N) \in E^N$ where
$A'_n = A_n \times \{ e_n \} $.
\end{definition}

For each element $\xi = (\gamma_n, e_n )_{n \in \nn}$ of $\supp \tilde{\tau}_{i}$ we can naturally consider the non-autonomous dynamics of $\xi$ and  define the Julia set $J_\xi$ as the set of non-equicontinuity of $\{ \gamma_N \circ \cdots \circ \gamma_1\}_{N \in \nn}$.
The following is a partial generalization of the Cooperation Principle.

\begin{result}[Proposition \ref{pathwisenull_prop}]\label{res-stable} 
If $J_{\ker,j} (S_\tau) = \emptyset$ for all $j \in V$,
then the ``averaged system'' is stable and the non-autonomous Julia set $J_\xi$ is of (Lebesgue) measure-zero for $\tilde{\tau}_i$-almost every $\xi$.
 \end{result}
We say that the system $S_\tau$ is {\it irreducible} if the directed graph $(V,  E)$ is strongly connected.
Using the theory of rational semigroups, we have the following result.

\begin{result}[Corollary \ref{cor-density}]\label{res-dense}
If $S_\tau$ is irreducible and $\# J_j(S_\tau) \geq 3$ for some $j =1, \dots,m$,  
then $$J_i(S_\tau) =\overline{ \bigcup_{ h \in H_i^i(S_\tau)} \{ \text{repelling fixed points of }h \} } = \overline{\bigcup_{\xi \in \supp \tilde{\tau}_i} J_{\xi}}$$
for all $i =1, \dots , m$.
Here $\# A$ denotes the cardinality of a set $A$.
\end{result}

We next focus on systems of polynomial maps on $\rs$ and the functions  which represent the probability of tending to infinity. 
For the rest of this subsection, suppose that $S_\tau$ is irreducible and $\supp \tau_e$ is a compact subset of the space $\poly$ of all polynomial maps on $\rs$  of degree $2$ or more for each $e \in E$. 

\begin{definition}\label{Tinfty_intro}
We define the function $\mathbb{T}_{\infty, \tau} \colon \rs \times V \to [0, 1]$ by
\[\mathbb{T}_{\infty ,\tau}(z,i) := \tilde{\tau}_i (\{ \xi = (\gamma_n , e_n )_{n \in \nn}  ; \, d(\gamma_{n}\circ \dots \circ \gamma_1 (z), \infty) \to 0 \, (n \to \infty) \} ) \]
for any point  $(z,i) \in \rs \times V$.
\end{definition}

We have the following results regarding the relation between the kernel Julia sets $ J_{\ker, i} (S_\tau)$ and the continuity of $\mathbb{T}_{\infty, \tau}$.

\begin{result}[Proposition \ref{prop-tInfty}]\label{res_Tconti}
	If $J_{\ker,j} (S_\tau) = \emptyset$ for some $j \in V$,
	then $\mathbb{T}_{\infty, \tau}$ is continuous on $\rs \times V$.
\end{result}

\begin{result}[Corollary \ref{cor-kernelJuliaCond} (ii)]\label{res_addnoise}
	Suppose that there exists $e \in E$ such that 
	$$\supp \tau_e \supset \{ f + c ; | c -c_0| < \varepsilon \}$$
	 for some $f \in \poly$, $c_0 \in \mathbb{C}$ and $\varepsilon >0$.
	Then  $J_{\ker,j} (S_\tau) = \emptyset$ for some $j \in V$ and hence $\mathbb{T}_{\infty, \tau}$ is continuous on $\rs \times V$.
\end{result}

Roughly speaking, if there are sufficiently many maps in one system, then the maps cooperate with one another and thereby  eliminate the chaos on average.
Consequently the function \if0 $\mathbb{T}_{\infty, \tau}$ \fi is continuous on the whole space.
This phenomenon cannot hold in deterministic dynamical systems since $\mathbb{T}_{\infty, \tau}$ takes the value $0$ on the filled-in Julia set  and the value $1$ outside of it.

Let us consider systems with  finite maps.
In this case, we need certain conditions which make $\mathbb{T}_{\infty, \tau}$ continuous.

\begin{definition}
We say that a system $S_\tau$  satisfies the {\it backward separating condition} if $f_{1} ^{-1} (J_{t(e_1)}(S) ) \cap f_{2} ^{-1} (J_{t(e_2)}(S) ) = \emptyset$ for every  $e_1, e_2 \in E$ with the same initial vertex and for every $f_{1}\in \supp \tau_{e_1}, f_{2} \in \supp \tau_{e_2} $, except the case $e_1 =e_2$ and $f_{1} =f_{2}  $.
\end{definition}

We say that $S_\tau$ is {\it essentially non-deterministic} if there exist $e_1, e_2 \in E$ with $i(e_1)=i(e_2)$ and there exist $f_1 \in \supp \tau_{e_1}, f_2 \in \supp \tau_{e_2}$ such that either $e_1 \neq e_2$ or $f_1 \neq f_2$.

We now present a result for  systems with finite maps regarding the continuity of $\mathbb{T}_{\infty, \tau}$ and the set of points where $\mathbb{T}_{\infty, \tau}$  is not locally constant.

\begin{result}[Lemma \ref{sepcond_lemma}, Proposition \ref{prop-intEmpty},  Theorem \ref{theorem-Tinfty}]\label{res-T}
	Suppose that the polynomial system $S_\tau$ satisfies the  backward separating condition.
If $\supp \tau_e$ is finite for each $e \in E$,
then $J_i(S_\tau)$ has no interior points for each $i \in V$ and we have  either $\mathbb{T}_{\infty, \tau} \equiv 1$ or 
$$J_i(S_\tau) = \{ z \in \mathbb{C} ; \mathbb{T}_{\infty, \tau}(\cdot, i) \text{ is not constant on any neighborhood of } z\}$$
for each $i \in V$.
Moreover, if additionally $S_\tau$ is essentially non-deterministic,
then $\mathbb{T}_{\infty, \tau}$ is continuous on $\rs \times V$.
\end{result}

The former part of Main Result \ref{res-T} is a generalization of the classical fact that the Julia set of polynomial $f$ of degree $2$ or more is the boundary of the filled-in Julia set of $f$.
However, the latter part of Main Result \ref{res-T} indicates a kind of randomness-induced phenomenon and is a generalization of the fact known in i.i.d. cases \cite[Lemma 3.75]{sumi11}.

We next present the applications of the main results.
For a fixed  $m \in \nn$, given $f_1, \dots,f_m \in \poly$ and a given irreducible stochastic matrix $P=(p_{ij})_{i,j = 1, \dots,m}$, we define  $\tau_{ij}$ as the  measure $p_{ij} \delta_{f_i}$, where $\delta_{f_i}$ denotes the Dirac measure at $f_i$.
We consider the polynomial MRDS induced by  $\tau = (\tau_{ij})$.
Let  $p =(p_1, \dots, p_m)$ be  the positive vector such that $\sum_{i=1}^m p_i =1$ and $p P =p$.
Set $T_{\infty, \tau} \colon \rs \to [0,1]$ as $T_{\infty, \tau} (z) := \sum_{i=1}^m p_i \mathbb{T}_{\infty, \tau}(z,i)$.
In other words, we consider the random dynamical system whose choice of maps is as follows: we choose a map $f_{i_1}$ with probability $p_{i_1}$ at the first step, and after choosing a map $f_{i_N}$ we choose the next map $f_{i_{N+1}}$ with probability $p_{i_N i_{N+1}}$ at the ($N+1$)-st step, where $i_1, \dots, i_N, i_{N+1} \in \{1, \dots, m\}$. 

\begin{cor}[Corollary \ref{cor-foeExample}]
Suppose that  ${T}_{\infty, \tau} \not \equiv 1$
	and $J_i (S_\tau)\cap J_j(S_\tau) = \emptyset$ for all  $i,j \in V$ with $i\neq j$.
	Then  $\bigcup _{i\in V}J_{i}(S_{\tau })$ has no interior points and 
	\begin{align*}
		 \bigcup_{i \in V} J_i(S_\tau)
		 = \{ z \in \mathbb{C} ; {T}_{\infty, \tau} \text{ is not constant on any neighborhood of } z\}.
	\end{align*}
Moreover, if there exist $i,j,k \in \{1, \dots , m \}$ with $j \neq k$  such that $p_{ij} >0$ and $p_{ik} >0$  in addition to the assumption above, 
then $T_{\infty,\tau}$ is continuous on $\rs$.
\end{cor}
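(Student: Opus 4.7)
The plan is to reduce everything to Main Result \ref{res-T}. The crucial preliminary step is verifying that $S_\tau$ satisfies the backward separating condition. Given two edges $e_1 = (i, j_1)$ and $e_2 = (i, j_2)$ sharing initial vertex $i$, the only map in either $\supp \tau_{e_1}$ or $\supp \tau_{e_2}$ is $f_i$ itself, so
$$f_i^{-1}(J_{j_1}(S_\tau)) \cap f_i^{-1}(J_{j_2}(S_\tau)) = f_i^{-1}\bigl(J_{j_1}(S_\tau) \cap J_{j_2}(S_\tau)\bigr),$$
which is empty whenever $j_1 \ne j_2$ by the disjointness hypothesis, and when $j_1 = j_2$ we are in the excepted case $e_1 = e_2$, $f_1 = f_2$. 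Since each $\supp \tau_{ij} = \{f_i\}$ is finite, Main Result \ref{res-T} applies.

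Because $p_i > 0$ for all $i$ and $T_{\infty,\tau} = \sum_i p_i \mathbb{T}_{\infty,\tau}(\cdot, i) \not\equiv 1$, the function $\mathbb{T}_{\infty,\tau}$ on $\rs \times V$ is not identically $1$, so Main Result \ref{res-T} yields both the emptiness of $\mathrm{int}\, J_i(S_\tau)$ and the characterization
$$J_i(S_\tau) = \{z \in \mathbb{C} : \mathbb{T}_{\infty,\tau}(\cdot, i) \text{ is not constant on any neighborhood of } z\}.$$
A finite union of closed nowhere-dense sets is nowhere dense (an elementary Baire-style argument), so $\bigcup_i J_i(S_\tau)$ has empty interior, giving the first claim.

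For the set equality, on $\rs \setminus \bigcup_i J_i(S_\tau)$ every $\mathbb{T}_{\infty,\tau}(\cdot, i)$ is locally constant, hence so is the convex combination $T_{\infty,\tau}$; this yields the inclusion $\subset$. For $\supset$, if $z \in J_i(S_\tau)$ then the pairwise disjointness $J_i \cap J_j = \emptyset$ ($j \ne i$) furnishes a neighborhood $U$ of $z$ meeting no $J_j(S_\tau)$ with $j \ne i$; on $U$ each such $\mathbb{T}_{\infty,\tau}(\cdot, j)$ is constant, so
$$T_{\infty,\tau}|_U = c + p_i\, \mathbb{T}_{\infty,\tau}(\cdot, i)|_U$$
for some constant $c$, and since $p_i > 0$ the non-local-constancy at $z$ transfers from $\mathbb{T}_{\infty,\tau}(\cdot, i)$ to $T_{\infty,\tau}$.

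For the continuity statement, the extra hypothesis that $p_{ij}, p_{ik} > 0$ for some $j \ne k$ immediately exhibits two distinct admissible edges $e_1 = (i, j)$ and $e_2 = (i, k)$ sharing an initial vertex, so $S_\tau$ is essentially non-deterministic. Main Result \ref{res-T} then gives continuity of $\mathbb{T}_{\infty,\tau}$ on $\rs \times V$, and $T_{\infty,\tau}$ is a finite convex combination of its slices and therefore continuous on $\rs$. The only genuinely delicate point is the $\supset$ direction of the set equality: it really requires the disjointness of Julia sets to preclude a cancellation in the sum that would mask the non-constant behavior of the single active slice.
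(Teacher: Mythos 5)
Your proposal is correct and follows essentially the same route as the paper, which simply cites Theorem \ref{theorem-Tinfty}, Corollary \ref{cor-cpt} and Proposition \ref{prop-intEmpty} (packaged as Main Result \ref{res-T}); your verification of the backward separating condition and the transfer of non-local-constancy from $\mathbb{T}_{\infty,\tau}(\cdot,i)$ to $T_{\infty,\tau}$ via the disjointness of the $J_j(S_\tau)$ are exactly the details the paper leaves implicit. The only cosmetic point is that the neighborhood $U$ in your $\supset$ argument should be taken connected (e.g.\ a ball) so that ``locally constant'' upgrades to ``constant'' for each $\mathbb{T}_{\infty,\tau}(\cdot,j)$, $j\neq i$.
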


There are new phenomena in Markov random dynamical systems which cannot hold in i.i.d.\ random dynamical systems.
More precisely, in the i.i.d.\ case, if the function ${T}_{\infty, \tau}$ is not identically $1$, then there exists $z_0 \in \mathbb{C}$ such that ${T}_{\infty, \tau}(z_0) =0 $ (see \cite{sumi11} or Lemma \ref{lem-filledEquiv}).
However, we show the following result. 
See also Figure \ref{fig-IntromrkvT} and Figure \ref{fig-section}. 

\begin{result}[Proposition \ref{prop-mrkv} and Example \ref{ex-mrkvT}]\label{res-6}
There exists $\tau=(\tau_{ij})_{i, j}$ with $\supp \tau_{ij} \subset \poly $ such that $T_{\infty, \tau}$ is continuous, ${T}_{\infty, \tau} \not \equiv 1$,  ${T}_{\infty, \tau}(z) > 0$ for each  $z \in \rs $ and $S_{\tau}$ is irreducible. 
\end{result}

 \vspace{-5mm}

\begin{figure}[htbp]
\begin{minipage}{71truemm}
\begin{center}
\includegraphics[width=4cm]{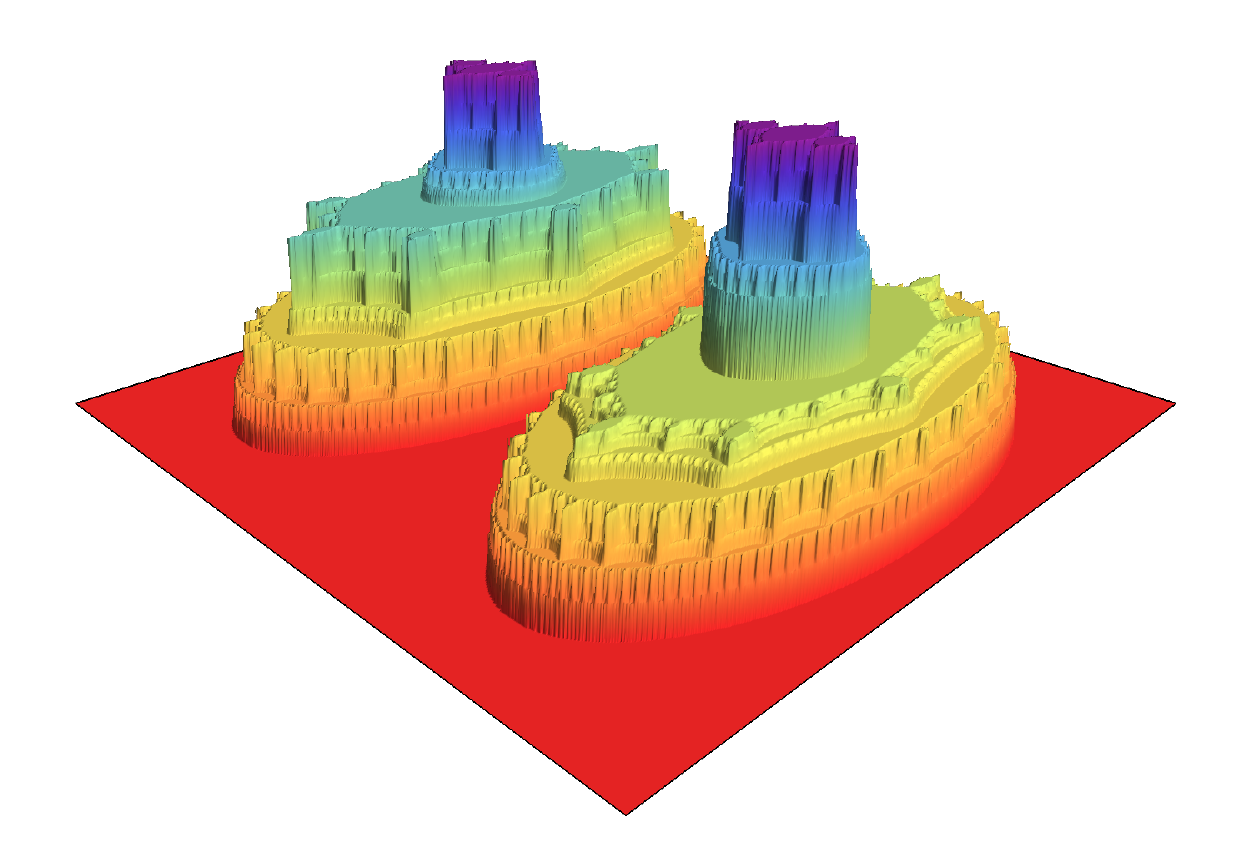}
\end{center}
 \caption{The graph of $1 - T_{\infty ,\tau }$ with a  new phenomenon which cannot hold in i.i.d. random dynamical systems of polynomials.}
\label{fig-IntromrkvT}
\end{minipage}
\hspace{5mm}
\begin{minipage}{72truemm}
  \begin{center}
 \includegraphics[width=4cm]{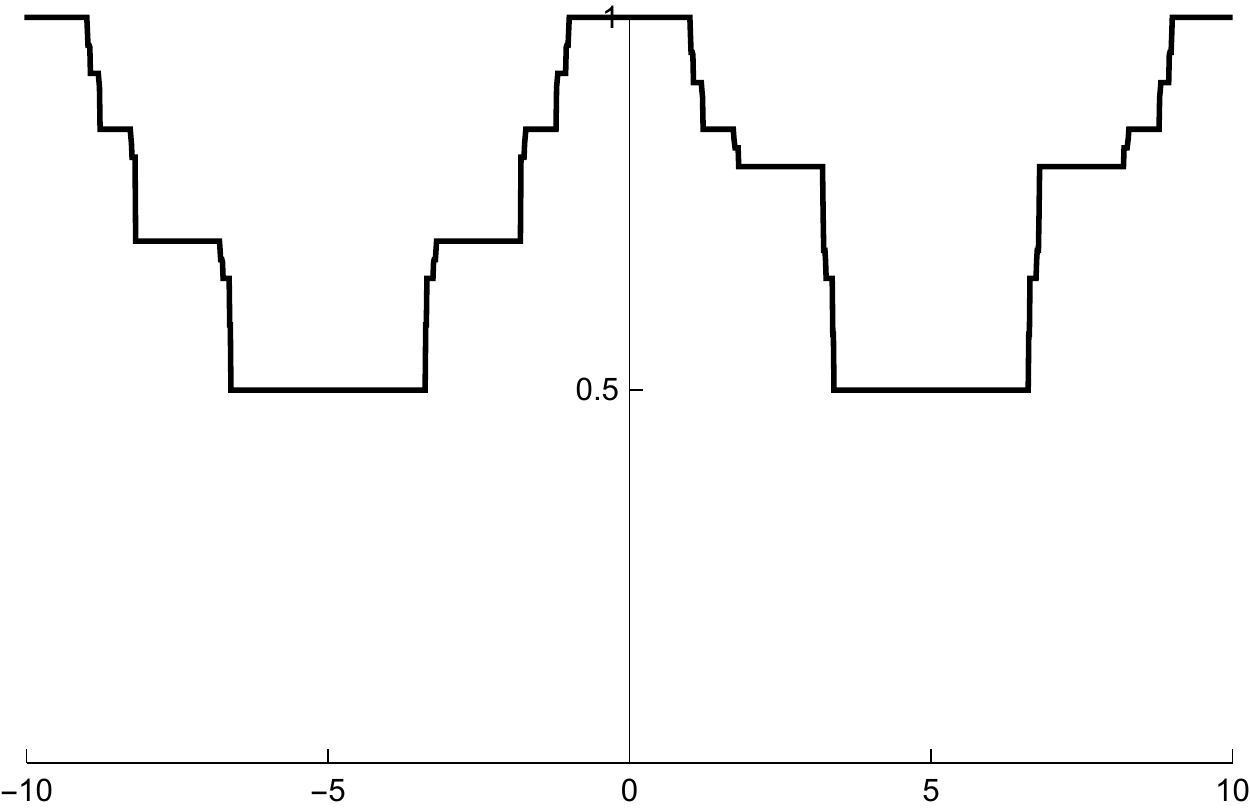}
 \end{center}
 \caption{The graph of the function $T_{\infty ,\tau }$ on the real line}
   \label{fig-section}
  \end{minipage}
\end{figure}

\subsection{Organization}
This paper is organized as follows.
In Section \ref{pre sec}, we introduce graph directed Markov systems on a compact metric space $(Y, d)$ and discuss some basic concepts.
Although we are most interested in holomorphic dynamics, we first treat such systems
on  general  compact metric spaces
 in order to show more generality.
The concept of graph directed Markov systems is similar to that of  rational semigroups in the theory of i.i.d.\ random holomorphic dynamical systems.
We define some kinds of Julia sets and show fundamental properties. 
We also discuss the dynamics of Markov operators following \cite{sumi11}.
In Section \ref{setting_sec}, we consider Markov random dynamical systems  that are induced by  given families $\tau$ of measures;
we define probability measures on the space of infinite product of $\cm (Y)$ and define a Markov operator $M_\tau$ induced by $\tau$.
Furthermore,  we prove Main Result \ref{res-stable}.
In Section \ref{ratMar_sec}, we focus on holomorphic dynamical systems on the Riemann sphere $\rs$.
We consider {\it rational} graph directed Markov systems in subsection \ref{julia_sec}, and prove Main Result \ref{res-dense} and other fundamental properties.
In subsection \ref{probfunc_section}, we investigate {\it polynomial} Markov random dynamical systems and prove Main Results \ref{res_Tconti}, \ref{res_addnoise}, \ref{res-T}   and \ref{res-6}.

\section*{Acknowledgment}
The authors thank Rich Stankewitz for valuable comments. The first author is partially sup- ported by JSPS Grant-in-Aid for Scientific Research (B) Grant Number JP 19H01790 and JSPS Grant-in-Aid for Scientific Research (A) Grant Number JP 18H03671. The second author is partially supported by JSPS Grant-in-Aid for JSPS Fellows Grant Number JP 19J11045.

\section{Preliminaries}\label{pre sec}
In this section, we introduce graph directed Markov systems on a compact metric space $(Y, d)$ and discuss some basic concepts.
These systems are similar to rational semigroups in the theory of i.i.d.\ random holomorphic dynamical systems.
In subsection \ref{julia_subsec}, we define some kinds of Julia sets and show fundamental properties of them. 
In subsection \ref{skewprod_subsec}, we give the definition of  skew product maps associated with graph directed  Markov systems and consider its dynamics.
In subsection \ref{mrkvop_subsec}, we discuss dynamics of Markov operators following \cite{sumi11}.

\subsection{Julia sets of graph directed Markov systems}\label{julia_subsec}
\begin{notation}\label{cm_notation}
We denote by $\cm (Y)$ the set of all continuous maps from $Y$ to itself and we define a metric $\kappa$ on $\cm (Y)$ by $\kappa (f,g):= \sup_{y \in Y} d(f(y),g(y))$.  
The space $\cm (Y)$ is a separable complete metric space since $Y$ is compact.
We denote by $\ocm (Y)$ the set of all open continuous maps from $Y$ to itself.
\end{notation}

\begin{definition}
Let $(V,E)$ be a directed graph with finite vertices and finite edges, and let $\Gamma_e$ be a non-empty subset of $\cm (Y)$ indexed by a directed edge $e \in E$.
We call $S = (V, E,(\Gamma_e)_{e \in E})$ a graph directed Markov system (GDMS for short) on $Y$.
The symbol $i(e)$ (resp. $t(e)$) denotes the initial (resp. terminal) vertex of each directed edge $e \in E$.   
\end{definition}

 \begin{figure}[h]
 \centering
 \includegraphics[width=5cm]{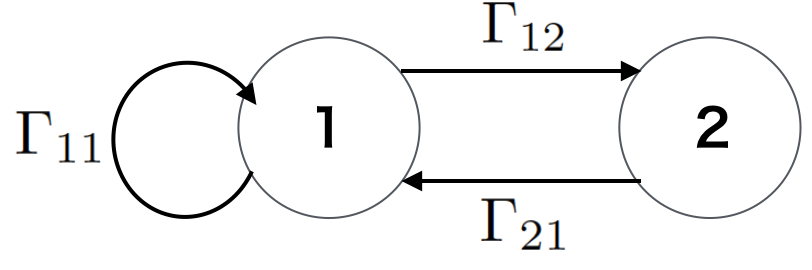}
\label{graph}
\caption{A schematic diagram of a GDMS}
\end{figure}

For an example of GDMS, see Figure 3. 
In the following, $S = (V, E,(\Gamma_e)_{e \in E})$ denotes a GDMS on $Y$.

\begin{definition}
Let $S= (V, E,(\Gamma_e)_{e \in E})$ be a GDMS.
\begin{enumerate}
\item A word  $e = (e_1,e_2, \dots, e_N) \in E^N$ with length $N \in \nn $ is said to be admissible if $t(e_n) = i(e_{n+1})$  for all  $n =1,2, \dots , N-1$. 
For this word $e$, we call $i(e_1)$ (resp. $t(e_N)$) the initial (resp. terminal) vertex of $e$ and we denote it by $i(e)$ (resp. $t(e)$). 
\item We set 
\begin{align*}
H(S)&:= \{f_{N} \circ \dots  \circ f_{2} \circ f_{1} ;\\
& N \in \nn, f_{n} \in \Gamma_{e_n}, t(e_n)=i(e_{n+1})(\forall n=1,\ldots, N-1)\}, \\
H_i(S)&:= \{f_{N} \circ  \dots \circ f_{2} \circ f_{1} \in H(S) ;\\
& N \in \nn, f_{n} \in \Gamma_{e_n}, t(e_n)=i(e_{n+1})(\forall n=1,\ldots, N-1),  i=i(e_1)\}, \\
H_i^j(S) &:= \{f_{N} \circ  \dots \circ f_{2} \circ f_{1} \in H(S) ;\\
& N \in \nn, f_{n} \in \Gamma_{e_n}, t(e_n)=i(e_{n+1})(\forall n=1,\ldots, N-1),  i=i(e_1), t(e_N)=j\}. 
\end{align*}
\end{enumerate}
\end{definition}

Now we define the Fatou sets and Julia sets of  GDMSs.
Recall that a subset $\mathcal{F} \subset \cm (Y)$ is said to be equicontinuous at a point $y \in Y$ if for every $\varepsilon >0$ there exists $\delta > 0$ such that for every $ f \in \mathcal{F}$ and for every $z \in Y$ with $d(y,z) < \delta$, we have $d(f(y),f(z))< \varepsilon$.
A subset $\mathcal{F} \subset \cm (Y)$ is said to be equicontinuous on a subset  $U \subset Y$ if  $\mathcal{F} $ is equicontinuous at every point of $U$.
See \cite{bear}.

\begin{definition}\label{julia_def}
Let $S= (V, E,(\Gamma_e)_{e \in E})$ be a GDMS.
\begin{enumerate}
\item  We denote by $F(S)$ the set of  all points $y \in Y$ for which there exists a neighborhood $U$ in $Y$ such that  the family  $H(S)$ is  equicontinuous on $U$.
 $F(S)$ is called the Fatou set of $S$ and the complement $J(S) := Y \setminus F(S)$ is called the Julia set of $S$.
\item For each $i \in V$, we denote by $F_i(S)$  the set of  all points $y \in Y$ for which there exists a neighborhood $U$ in $Y$ such that  the family  $H_i(S)$ is  equicontinuous on $U$.
 $F_i(S)$ is called the Fatou set of $S$ at the vertex $i$  and the complement $J_i(S) := Y \setminus F_i(S)$ is called the Julia set of $S$ at the vertex $i$.
\item Set $\mathbb{F}(S) :=\bigcup_{i \in V} F_i(S) \times \{i\},\mathbb{J}(S) :=\bigcup_{i \in V} J_i(S) \times \{i\}$.
\end{enumerate}
\end{definition}

\begin{rem}\label{semiJulia_rem}
Let $S= (V, E,(\Gamma_e)_{e \in E})$  be a GDMS with just one vertex and just one edge, say $V=\{ 1 \}, E= \{ (1,1)\}$. Then $H(S)=H_1(S)$ coincides with the semigroup generated by $\Gamma_{(1,1)}$, where the product is the  composition of maps,  and the Fatou (resp.\ Julia) set of the GDMS $S$ coincides with the Fatou (resp.\ Julia) set of the semigroup $H(S)$ of continuous maps on $Y$.
By definition, the Fatou set of a semigroup $G$ of continuous maps on $Y$ is the set of  all points $y \in Y$ for which there exists a neighborhood $U$ in $Y$ such that  the family  $G$ is  equicontinuous on $U$.
The Fatou sets of  semigroups are related to i.i.d. random dynamical systems.
See \cite{stank12}, \cite{sumi11}. 
\end{rem}

\begin{rem}
The Fatou sets $F(S)$ and $F_i(S)$ are open subsets of $Y$ and  the Julia sets $J(S)$ and $J_i(S)$ are compact subsets of $Y$.
Moreover, we have $F(S)= \bigcap_{i \in V} F_i(S)$ and $J(S)=\bigcup_{i \in V} J_i(S)$.
\end{rem}

\begin{notation}
For families $\mathcal{F}_1, \mathcal{F}_2 \subset \cm (Y)$  of maps, define \[ \mathcal{F}_2 \circ \mathcal{F}_1 := \{ f_2 \circ f_1\in \cm (Y)\, ;\, f_1 \in \mathcal{F}_1  \text{ and } f_2 \in \mathcal{F}_2 \}.\] 
For an element $h\in \mathrm{CM}(Y)$ and ${\mathcal F}\subset \mathrm{CM}(Y)$, we set $h\circ {\mathcal F}:= \{ h\} \circ {\mathcal F}.$
\end{notation}

\begin{notation}
$B_d (y,r)$ denotes the ball of  radius $r$ and center  $y$ with  respect to the metric $d$ in the space $Y$.
 It is also denoted  briefly by $B (y,r)$.
 \end{notation}

\begin{lemma}\label{equiconti_lemma}
Suppose that $Y$ is locally connected, 
that $\mathcal{F} \subset \cm (Y)$  is not equicontinuous at a point  $y_0 \in Y$ and that 
$h \in  \cm (Y)$ satisfies the following condition.
\begin{enumerate}
\item[]  $\sup \{ \diam C ; \,C \text{ is a connected component of } h^{-1}(B (y,\varepsilon))\} \to 0$ as $\varepsilon \to 0$ for any point $y \in Y$. 
\end{enumerate}
Then $h \circ \mathcal{F} $ is  not equicontinuous at the point  $y_0$.
\end{lemma}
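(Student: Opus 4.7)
The plan is to argue by contradiction: assume $h\circ\mathcal{F}$ is equicontinuous at $y_0$, and use the non-equicontinuity of $\mathcal{F}$ at $y_0$ together with the ``no collapse'' property of $h$ to derive a contradiction. Unpacking the failure of equicontinuity of $\mathcal{F}$ at $y_0$, I would fix $\varepsilon_0>0$ and choose sequences $f_n\in\mathcal{F}$ and $z_n\in Y$ with $z_n\to y_0$ and $d(f_n(y_0),f_n(z_n))\ge\varepsilon_0$.

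The first real step is to upgrade the hypothesis on $h$ from pointwise to uniform in $y$: for every $\eta>0$ there is some $\varepsilon>0$ such that every connected component of $h^{-1}(B(y,\varepsilon))$ has diameter $<\eta$ \emph{for every} $y\in Y$. This uniformity will be needed because the relevant centers $h(f_n(y_0))$ will vary with $n$. I would prove it by contradiction using compactness of $Y$: if it failed, one could extract $y_n\to y^*$ and connected components $C_n$ of $h^{-1}(B(y_n,\varepsilon_n))$ of diameter $\ge\eta$; for any $\delta>0$ and $n$ large, $B(y_n,\varepsilon_n)\subset B(y^*,\delta)$, so each $C_n$ is contained in some connected component of $h^{-1}(B(y^*,\delta))$, violating the pointwise hypothesis at $y^*$ for small $\delta$.

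With that, I apply the uniform version to $\eta=\varepsilon_0/2$ to produce $\varepsilon_1>0$. Under the contradictory hypothesis of equicontinuity of $h\circ\mathcal{F}$ at $y_0$, there is some $\delta>0$ such that $d(h(f(y_0)),h(f(z)))<\varepsilon_1$ for every $f\in\mathcal{F}$ and every $z\in B(y_0,\delta)$. Using local connectedness of $Y$, shrink $\delta$ so that $B(y_0,\delta)$ contains a connected open neighborhood $U$ of $y_0$; for large $n$, $z_n\in U$. Then $f_n(U)$ is connected, it contains both $f_n(y_0)$ and $f_n(z_n)$, and it sits inside $h^{-1}(B(h(f_n(y_0)),\varepsilon_1))$, hence inside a single connected component $C_n$ of that preimage. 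By the uniform choice of $\varepsilon_1$, $\diam C_n<\varepsilon_0/2$, but $\diam C_n\ge d(f_n(y_0),f_n(z_n))\ge\varepsilon_0$, the desired contradiction.

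The only genuinely subtle step is the uniformization of the diameter condition on $h$; the rest is a straightforward combination of continuity, local connectedness, and the standard trick of confining the image of a connected neighborhood to a single connected component of a preimage. I expect the uniformization to be the main obstacle only in the sense that it must be stated and proved carefully; once in hand, the contradiction is immediate.
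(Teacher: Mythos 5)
Your proposal is correct and follows essentially the same route as the paper: argue by contradiction, use compactness of $Y$ to make the diameter bound on components of $h^{-1}(B(y,\varepsilon))$ uniform in the center $y$, and then trap the image of a connected neighborhood of $y_0$ inside a single small component of a preimage. The only difference is that you spell out the uniformization step (which the paper dispatches with ``since $Y$ is compact, we may assume $\varepsilon$ does not depend on $z$''), and your compactness argument for it is valid.
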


\begin{proof}
We first note that the following.
Since $\mathcal{F}$  is not equicontinuous at a point  $y_0 \in Y$, there exists a positive real number $\epsilon_0$ such that for any $\delta_0 >0$ there exists $y \in B(y_0, \delta_0)$ and $f \in \mathcal{F}$ such that $d(f(y),f(y_0)) \geq \epsilon_0$.
By the assumption, for each $z \in Y$, there exists $\varepsilon >0$ such that the diameter of each connected component of $h^{-1}\left(B (z,\varepsilon) \right)$ is less than $\epsilon_0$. 
Since $Y$ is compact, we may assume that $\varepsilon$ does not depend on $z$.

The proof is by contradiction. 
Suppose that  $h \circ \mathcal{F} $ is equicontinuous at $y_0$. 
Then there exists $\delta >0$ such that for any $y \in B(y_0, \delta)$ and for any $f \in \mathcal{F}$ such that $d(h \circ f(y),h \circ f (y_0)) < \varepsilon$.
Since $Y$ is locally connected, we may assume that $B(y_0, \delta)$ is connected.
By the definition of $\epsilon_0$, there exists $y_1 \in B(y_0, \delta)$ and $g \in \mathcal{F}$ such that $d (g(y_1),  g(y_0)) \geq \epsilon_0$.
Let $C$ be the connected component of $h^{-1}\left(B (h \circ g(y_0),\varepsilon) \right)$ which contains $g(y_0)$, 
whose diameter  is necessarily less than $\epsilon_0$.
Since $B(y_0, \delta)$ is connected and $h \circ g ( B(y_0, \delta)) \subset B (h \circ g(y_0),\varepsilon) $, we have $g ( B(y_0, \delta)) \subset C$, and hence $g(y_1) \in C$.
However, this contradicts the fact that $d (g(y_1),  g(y_0)) \geq \epsilon_0$ and  $\diam C < \epsilon_0$.
\end{proof}

\begin{definition}
A GDMS $S= (V, E,(\Gamma_e)_{e \in E})$  is said to be {\it irreducible} if the directed graph of $S$ is strongly connected, that is, for any $(i,j) \in V \times V$, there exists an admissible word $e$ such that $i=i(e)$ and $t(e)=j$.
\end{definition}

\begin{lemma}\label{sysJulia=semiJulia_lemma}
Let $S= (V, E,(\Gamma_e)_{e \in E})$  be an irreducible GDMS such that every $h \in H(S)$ satisfies the condition mentioned in Lemma \ref{equiconti_lemma}: $$\sup \{ \diam C ; \,C \text{ is a connected component of } h^{-1}(B (y,\varepsilon))\} \to 0$$ as $\varepsilon \to 0$ for any point $y \in Y$. 
Then $J_i(S) = J(H_i^i(S) ) $  for any $i \in V$, where $J(H_i^i(S) ) $ is the Julia set of the semigroup $H_i^i(S)$.
\end{lemma}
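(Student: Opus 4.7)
The plan is to prove both set inclusions $J_i(S) \subset J(H_i^i(S))$ and $J(H_i^i(S)) \subset J_i(S)$ separately, treating the latter as trivial and expending most of the effort on the former.

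The easy direction $J(H_i^i(S)) \subset J_i(S)$ follows immediately from the inclusion $H_i^i(S) \subset H_i(S)$: equicontinuity of a larger family at a point implies equicontinuity of any subfamily at that point, so $F_i(S) \subset F(H_i^i(S))$ and therefore $J(H_i^i(S)) \subset J_i(S)$.

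For the reverse inclusion I would argue by contrapositive. Fix $y_0 \in Y$ and assume that $H_i(S)$ fails to be equicontinuous at $y_0$. The first step is to decompose $H_i(S) = \bigcup_{j \in V} H_i^j(S)$ as a \emph{finite} union (because $V$ is finite) and observe, by a standard pigeonhole-type argument on continuity moduli (for a given $\varepsilon$, take the minimum over $j \in V$ of the associated $\delta_j$), that non-equicontinuity of the union forces non-equicontinuity of at least one piece. Hence there exists $j_0 \in V$ such that $H_i^{j_0}(S)$ is not equicontinuous at $y_0$. The second step uses irreducibility of $S$: an admissible word from $j_0$ to $i$ exists, so $H_{j_0}^i(S) \neq \emptyset$, and I fix any $g \in H_{j_0}^i(S)$. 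By concatenating admissible words one has $g \circ H_i^{j_0}(S) \subset H_i^i(S)$. The third step applies Lemma \ref{equiconti_lemma} to $\mathcal{F} := H_i^{j_0}(S)$ and $h := g$: the hypothesis on $g$ is available because $g \in H(S)$ and every element of $H(S)$ is assumed in the lemma's statement to satisfy the diameter-shrinking condition. The lemma yields that $g \circ H_i^{j_0}(S)$ is not equicontinuous at $y_0$, and since this subfamily is contained in $H_i^i(S)$, the bigger family $H_i^i(S)$ also fails equicontinuity at $y_0$. Thus $y_0 \in J(H_i^i(S))$, which is what was needed.

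The main conceptual obstacle is the non-trivial direction as just outlined, and within it the crucial leverage comes from combining irreducibility (to guarantee a return loop $g$ from $j_0$ to $i$) with Lemma \ref{equiconti_lemma} (to pass non-equicontinuity through post-composition by $g$). A minor technicality that should be flagged is that Lemma \ref{equiconti_lemma} requires $Y$ to be locally connected; this is not restated explicitly in the present lemma's hypotheses but is part of the standing assumption on $Y$ in the targeted applications (in particular $Y = \rs$), so it may be worth either adding this hypothesis or invoking it at the point of application.
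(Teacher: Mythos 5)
Your proposal is correct and follows essentially the same route as the paper: the trivial inclusion from $H_i^i(S)\subset H_i(S)$, a finite pigeonhole over the terminal vertices to isolate a non-equicontinuous piece $H_i^{j_0}(S)$, a return word $g\in H_{j_0}^i(S)$ supplied by irreducibility, and Lemma \ref{equiconti_lemma} to push non-equicontinuity through post-composition by $g$. The only cosmetic difference is that you phrase the argument pointwise while the Julia sets are defined via equicontinuity on neighborhoods; since your pointwise implication applies at every point of any given neighborhood, this is immediately equivalent, and your remark about the local connectedness hypothesis needed for Lemma \ref{equiconti_lemma} is a fair observation about the paper as well.
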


\begin{proof}
Since $J_i(S)  \supset J(H_i^i(S) ) $  trivially, we show $J_i(S)  \subset J(H_i^i(S) ) $.
For any  $y_0 \in J_i(S)$, there exists a vertex $j \in V$ such that $H_i^j(S)$ is not equicontinuous on any neighborhood of  $y_0$ since $\# V < \infty$.
We fix  $h \in H_j^i(S)$, which exists  by the irreducibility of $S$.
According to Lemma \ref{equiconti_lemma}, $h \circ H_i^j(S)$ is not equicontinuous  on any neighborhood of   $y_0$.
Thus $H_i^i(S)$ is not equicontinuous  on any neighborhood of   $y_0$, 
and hence $y_0 \in  J(H_i^i(S) ) $.
\end{proof}

\begin{rem}\label{sysJulia=semiJulia_rem}
If $Y = \rs$ and  $\Gamma_e \subset \rat$ for all $e \in E$, then every $g \in H(S)$ satisfies the condition mentioned in Lemma \ref{equiconti_lemma}.
Thus Lemma \ref{sysJulia=semiJulia_lemma} holds in this case.
\end{rem}

\begin{notation}
For a family $\mathcal{F} \subset \cm (Y)$ and a set $X \subset Y$, we set  $$\mathcal{F} (X) := \bigcup_{f \in \mathcal{F}} f(X),\,
\mathcal{F} ^{-1}(X) :=\bigcup_{f \in\mathcal{F}} f^{-1}(X) .$$
If $ \mathcal{F} = \emptyset$, then we set $\mathcal{F} (X) := \emptyset, \,
\mathcal{F} ^{-1}(X) := \emptyset .$
\end{notation}

\begin{definition}
Let $L_i$ be a subset of $Y$ for each $i \in V$. 
We consider the familiy  $(L_i)_{i \in V}$  indexed by $i \in V$.
\begin{enumerate}
\item $(L_i)_{i \in V}$ is said to be {\it forward $S$-invariant} if $\Gamma_e( L_{i(e)}) \subset L_{t(e)}$  for all $e \in E$.
\item $(L_i)_{i \in  V}$ is said to be {\it backward $S$-invariant} if $\Gamma_e^{-1}( L_{t(e)}) \subset L_{i(e)}$   for all $e \in E$. 
\end{enumerate}
\end{definition}

It is easy to prove the following lemma and the proof is left to the
readers. 

\begin{lemma}\label{inv_lemma}
\begin{enumerate}
\item If a GDMS $S= (V, E,(\Gamma_e)_{e \in E})$  satisfies $\Gamma_e  \subset \ocm(Y)$ for all $e \in E$, then the family $(F_i (S))_{i \in V}$ (resp. $(J_i(S))_{i \in V}$) of Fatou sets (resp. Julia sets) is forward (resp. backward) $S$-invariant.
\item If  $(L_i)_{i \in V}$ is forward $S$-invariant, then $ H_i^j (S)( L_{i}) \subset L_{j}$  for every $i , j \in V$.
If  $(L_i)_{i \in V}$ is backward $S$-invariant, then $ (H_i^j(S))^{-1} ( L_{j}) \subset L_{i}$  for every $i , j \in V$.
\item  Let  $S$ be an irreducible GDMS and let $(L_i)_{i \in V}$ be a forward $S$-invariant family.
Then $L_i =\emptyset$ for {\it all} $i \in V$ if and only if $L_j =\emptyset$ for {\it some} $j \in V$.
\end{enumerate}
\end{lemma}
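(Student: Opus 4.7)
The plan is to handle the three parts in order, with (i) being the substantive one and (ii), (iii) following by short elementary arguments.

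For (i) I would first prove forward invariance of $(F_{i}(S))_{i \in V}$ and then obtain backward invariance of $(J_{i}(S))_{i \in V}$ by complementation. Fix $e \in E$, $f \in \Gamma_{e}$, and $y \in F_{i(e)}(S)$; let $U$ be a neighborhood of $y$ on which $H_{i(e)}(S)$ is equicontinuous. The key observation is that $H_{t(e)}(S) \circ f \subset H_{i(e)}(S)$, since prepending the edge $e$ to any admissible word starting at $t(e)$ produces an admissible word starting at $i(e)$. Thus $H_{t(e)}(S) \circ f$ is equicontinuous on $U$. To upgrade this to equicontinuity of $H_{t(e)}(S)$ on a neighborhood of $f(y)$ I would invoke $\Gamma_{e} \subset \ocm(Y)$: the set $f(U)$ is an open neighborhood of $f(y)$, and for every $w = f(y') \in f(U)$ and every $\varepsilon > 0$, equicontinuity of $H_{i(e)}(S)$ at $y'$ furnishes some $\delta' > 0$ with $B(y', \delta') \subset U$, while openness of $f$ supplies $\delta_{w} > 0$ with $B(w, \delta_{w}) \subset f(B(y', \delta'))$. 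Then any $w' \in B(w, \delta_{w})$ lifts to some $y'' \in B(y', \delta')$ with $f(y'') = w'$, and for $g \in H_{t(e)}(S)$ one computes $d(g(w'), g(w)) = d((g \circ f)(y''), (g \circ f)(y')) < \varepsilon$ uniformly. Backward invariance of the Julia sets is then formal: $f^{-1}(J_{t(e)}(S)) = Y \setminus f^{-1}(F_{t(e)}(S)) \subset Y \setminus F_{i(e)}(S) = J_{i(e)}(S)$.

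For (ii), every $h \in H_{i}^{j}(S)$ factors as $f_{N} \circ \cdots \circ f_{1}$ along an admissible word $(e_{1}, \ldots, e_{N})$ with $i(e_{1}) = i$ and $t(e_{N}) = j$. An induction on $N$ using $\Gamma_{e_{n}}(L_{i(e_{n})}) \subset L_{t(e_{n})} = L_{i(e_{n+1})}$ gives $h(L_{i}) \subset L_{j}$; the backward statement is the same induction applied to $f_{n}^{-1}$, read from $n = N$ down to $n = 1$. For (iii), suppose $L_{j_{0}} = \emptyset$ and fix an arbitrary $i \in V$. Irreducibility provides an admissible word from $i$ to $j_{0}$, and because each $\Gamma_{e}$ is nonempty by definition, this produces a nonempty $H_{i}^{j_{0}}(S)$. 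Part (ii) then forces $H_{i}^{j_{0}}(S)(L_{i}) \subset L_{j_{0}} = \emptyset$, so $L_{i} = \emptyset$.

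The only genuine obstacle is the openness step in (i): without $f \in \ocm(Y)$, the image $f(U)$ need not contain any ball around $f(y)$, and the argument would only yield equicontinuity of $H_{t(e)}(S)$ at the single point $f(y)$ on a possibly non-open set, which is weaker than what the definition of $F_{t(e)}(S)$ requires.
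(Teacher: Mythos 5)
Your proof is correct. The paper itself gives no proof of this lemma (it states only that ``the proof is left to the readers''), and your argument is exactly the intended one: the inclusion $H_{t(e)}(S)\circ f\subset H_{i(e)}(S)$ plus openness of $f$ to transport equicontinuity from a neighborhood of $y$ to the open set $f(U)$, complementation for the Julia sets, induction along admissible words for (ii), and irreducibility plus nonemptiness of each $\Gamma_e$ for (iii); you also correctly identify the openness hypothesis as the one genuinely load-bearing assumption in (i).
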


\begin{prop}\label{selfsim_prop}
If $\Gamma_e$ is a compact subset of $\ocm (Y)$  for all $e \in E$, then  $$\bigcup_{e \in E \colon i(e) =i } \Gamma_e^{-1}( J_{t(e)}(S)) = J_{i}(S)$$  for all $i \in V$.
\end{prop}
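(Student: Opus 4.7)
The plan is to prove the two inclusions separately. The easy direction $\bigcup_{e : i(e)=i} \Gamma_e^{-1}(J_{t(e)}(S)) \subset J_i(S)$ is immediate from Lemma \ref{inv_lemma}(i), which says that the family $(J_j(S))_{j \in V}$ is backward $S$-invariant (this needs exactly the hypothesis $\Gamma_e \subset \ocm(Y)$), so $\Gamma_e^{-1}(J_{t(e)}(S)) \subset J_{i(e)}(S) = J_i(S)$ for every $e$ with $i(e)=i$.

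For the reverse inclusion I would argue by contrapositive: assume $z \in Y$ satisfies $f(z) \in F_{t(e)}(S)$ for every $e$ with $i(e)=i$ and every $f \in \Gamma_e$; I want $z \in F_i(S)$. Decompose
$$H_i(S) \subset \bigcup_{e : i(e)=i} (H_{t(e)}(S) \cup \{\mathrm{id}_Y\}) \circ \Gamma_e.$$
Since the indexing set of edges is finite, it suffices to prove that for each such $e$ the family $(H_{t(e)}(S) \cup \{\mathrm{id}_Y\}) \circ \Gamma_e$ is equicontinuous at $z$. Fix one $e$. I rely on four ingredients: (a) a compact subset of $(\cm(Y),\kappa)$ is a (uniformly) equicontinuous family of functions on $Y$, a standard Arzel\`a--Ascoli type fact, so $\Gamma_e$ itself is equicontinuous; (b) $K:=\Gamma_e(z)$ is a compact subset of the open set $F_{t(e)}(S)$; (c) pointwise equicontinuity of $H_{t(e)}(S)$ at each point of $K$ upgrades to uniform equicontinuity on some open neighborhood $U$ of $K$ with $\overline{U}\subset F_{t(e)}(S)$, via the standard finite-covering argument; (d) joint continuity of the evaluation map $\cm(Y)\times Y\to Y$, combined with compactness of $\Gamma_e$, yields an open neighborhood $W_e$ of $z$ with $\Gamma_e(W_e)\subset U$.

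With these in hand the argument is a two-step $\varepsilon$-$\delta$. Given $\varepsilon > 0$, choose $\delta > 0$ from (c) for the uniform equicontinuity of $H_{t(e)}(S)$ on $\overline{U}$, then choose $\eta > 0$ from (a) so that $d(y_1,y_2)<\eta$ forces $d(f(y_1),f(y_2))<\delta$ for all $f \in \Gamma_e$. For $y_1,y_2 \in W_e$ with $d(y_1,y_2)<\eta$ and any $g = h\circ f$ with $f\in \Gamma_e$, $h \in H_{t(e)}(S)\cup\{\mathrm{id}_Y\}$, the images $f(y_1),f(y_2)$ lie in $U$ within $\delta$ of one another, so $d(g(y_1),g(y_2)) = d(h(f(y_1)),h(f(y_2)))<\varepsilon$. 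Intersecting the neighborhoods $W_e$ and taking the minimum of the $\eta$'s over the finitely many edges $e$ with $i(e)=i$ yields equicontinuity of $H_i(S)$ at $z$, hence $z \in F_i(S)$.

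I expect the main obstacle to be bookkeeping the chain of uniform choices, in particular step (c): lifting the a priori only pointwise equicontinuity on the open set $F_{t(e)}(S)$ to genuine uniform equicontinuity on a compact thickening of $\Gamma_e(z)$. Once one is comfortable combining openness of the Fatou set with compactness of $\Gamma_e(z)$ and of $\Gamma_e$ itself, the remainder is a routine composition-of-continuities estimate.
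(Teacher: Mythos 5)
Your proof is correct and follows the same route as the paper: the easy inclusion via backward $S$-invariance of the Julia sets (Lemma \ref{inv_lemma}), and the reverse inclusion by exploiting finiteness of $E$ and compactness of each $\Gamma_e$ to build a neighborhood of the point on which $H_i(S)$ is equicontinuous. The paper compresses the hard direction into the single sentence ``Since $E$ is finite and $\Gamma_e$ is compact for all $e\in E$, we have $y\notin J_i(S)$''; your write-up supplies exactly the details that sentence leaves implicit.
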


\begin{proof}
If there is no $e \in E$ that satisfies $i(e) = i$, the statement is trivial.
Hence we may assume  there exists some $e \in E$ that satisfies $i(e) = i$.

According to Lemma \ref{inv_lemma}, $\bigcup_{i(e) =i } \Gamma_e^{-1}( J_{t(e)}(S)) \subset J_{i}(S) $.
Fix any $y \notin \bigcup_{i(e) =i } \Gamma_e^{-1}( J_{t(e)}(S))$.
Since $E$ is finite and $\Gamma_e$ is compact for all $e \in E$, we have $y \notin J_{i}(S)$.
Thus $\bigcup_{i(e) =i } \Gamma_e^{-1}( J_{t(e)}(S)) \supset J_{i}(S) $.
\end{proof}

\begin{definition}
We set $J_{\ker,i} (S):=  \bigcap_{ j \in V :  H_i^j(S) \neq \emptyset} \bigcap_{h \in H_i^j(S) } h^{-1} (J_j(S))$
and  call it the kernel Julia set of $S$ at the vertex $i \in V$.
Here, we set $J_{\ker,i} (S):= \emptyset$ if $H_i^j(S)= \emptyset$ for all $j \in V$.
Recall that the kernel Julia set of a semigroup $G \subset \cm (Y)$ is defined by $J_{\ker} (G) = \bigcap_{g \in G} g^{-1}(J(G))$, where $J(G)$ denotes the Julia set of $G$ defined in Remark \ref{semiJulia_rem} (see \cite{sumi11}).
Moreover, we set  $\mathbb{J} _{{\rm ker}} (S) := \bigcup_{i \in V} J_{{\rm {\rm ker}},i}(S) \times \{ i \} \subset Y \times V$.
\end{definition}

\begin{cor}
Suppose that an irreducible  GDMS $S= (V, E,(\Gamma_e)_{e \in E})$  satisfies   $\Gamma_e \subset \ocm(Y)$ for all $e \in E$ and 
 every $h \in H(S)$ satisfies the condition mentioned in Lemma \ref{equiconti_lemma}: $$\sup \{ \diam C ; \,C \text{ is a connected component of } h^{-1}(B (y,\varepsilon))\} \to 0$$ as $\varepsilon \to 0$ for any point $y \in Y$. 
Then $J_{{\rm ker} , i} (S) = J_{{\rm ker}} (H_i^i (S))$, 
 where the right hand side is the kernel Julia set of the semigroup $H_i^i(S)$.
\end{cor}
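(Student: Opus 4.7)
The plan is to reduce the identity to the equality $J_i(S) = J(H_i^i(S))$ already established by Lemma \ref{sysJulia=semiJulia_lemma}, and then to use irreducibility together with the backward $S$-invariance of $(J_j(S))_{j\in V}$ from Lemma \ref{inv_lemma} to pass between the different vertices. By Lemma \ref{sysJulia=semiJulia_lemma} we may rewrite
\[
J_{\ker}(H_i^i(S)) = \bigcap_{g\in H_i^i(S)} g^{-1}\bigl(J(H_i^i(S))\bigr) = \bigcap_{g\in H_i^i(S)} g^{-1}(J_i(S)),
\]
so the task becomes to compare this with $J_{\ker,i}(S) = \bigcap_{j\in V,\, H_i^j(S)\neq\emptyset}\bigcap_{h\in H_i^j(S)} h^{-1}(J_j(S))$.

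For the inclusion $J_{\ker,i}(S) \subset J_{\ker}(H_i^i(S))$ I would just specialize the outer intersection to $j=i$, which is a legitimate index because irreducibility guarantees $H_i^i(S) \neq \emptyset$; then the inner intersection over $H_i^i(S)$ appears directly and by the rewriting above yields $J_{\ker}(H_i^i(S))$.

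For the reverse inclusion, fix $y \in J_{\ker}(H_i^i(S))$ and let $j \in V$ with $H_i^j(S) \neq \emptyset$ and $g \in H_i^j(S)$; the goal is $g(y) \in J_j(S)$. Using irreducibility I pick some $h \in H_j^i(S)$, so that $h\circ g \in H_i^i(S)$. By hypothesis $(h\circ g)(y) \in J_i(S)$, hence $g(y) \in h^{-1}(J_i(S))$. Now $\Gamma_e \subset \ocm(Y)$ for every $e \in E$, so by Lemma \ref{inv_lemma}(i) the family $(J_k(S))_{k\in V}$ is backward $S$-invariant, and by Lemma \ref{inv_lemma}(ii) this extends to all admissible compositions, giving $h^{-1}(J_i(S)) \subset J_j(S)$. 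Therefore $g(y) \in J_j(S)$, proving $y \in J_{\ker,i}(S)$.

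The argument is essentially bookkeeping once the correct lemmas are in place; the only spot that requires any care is recognizing that irreducibility is exactly what allows us to upgrade control at the single vertex $i$ (provided by membership in $J_{\ker}(H_i^i(S))$) to control at every reachable vertex $j$, by composing with a path from $j$ back to $i$ and invoking backward invariance. So the main (mild) obstacle is cleanly stringing together irreducibility, Lemma \ref{sysJulia=semiJulia_lemma}, and backward $S$-invariance in the correct order.
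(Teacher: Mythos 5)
Your proof is correct and follows essentially the same route as the paper: the forward inclusion by specializing the defining intersection to $j=i$ and invoking Lemma \ref{sysJulia=semiJulia_lemma}, and the reverse inclusion by composing with a return path $h\in H_j^i(S)$ (available by irreducibility) and applying backward $S$-invariance from Lemma \ref{inv_lemma}. The only differences are notational.
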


\begin{proof}
By Lemma \ref{sysJulia=semiJulia_lemma}, we have $J_{{\rm ker} , i} (S) \subset  J_{{\rm ker}} (H_i^i(S))$.
We now fix   $ y \in J_{{\rm ker}} (H_i^i(S))$ and fix  $h \in H_i^j(S)$.
Since $S$ is irreducible, there exists   $f \in H_j^i(S)$ so that  $f \circ h \in H_i^i(S)$ and hence $f \circ h (y) \in J(H_i^i (S))$.
Thus, we have $h (y) \in J_j(S)$ by Lemma \ref{sysJulia=semiJulia_lemma} and Lemma \ref{inv_lemma}.
\end{proof}

\begin{notation}
Let $(L_i)_{i \in  V}, (\tilde{L}_i)_{i \in  V}$ be families of subsets of $Y$ indexed by $V$.
We write  $(L_i)_{i \in  V} \subset  (\tilde{L}_i)_{i \in  V}$  if $L_i \subset \tilde{L}_i$ for all $i \in V$.
\end{notation}

\begin{lemma}\label{kerjulia_lemma}
For kernel Julia sets $(J_{\ker , i}(S) )_{i \in V}$, the following statements hold.
\begin{enumerate}
\item The family  $(J_{\ker , i}(S) )_{i \in V}$ is forward $S$-invariant.
\item If a forward $S$-invariant  family $(L_i)_{i \in  V}$ satisfies $(L_i)_{i \in  V} \subset (J_i(S))_{i \in V}$, then  $(L_i)_{i \in  V} \subset (J_{\ker, i}(S))_{i \in V}$.
\end{enumerate}
\end{lemma}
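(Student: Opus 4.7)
The plan is to prove both parts by directly unpacking the definition
$$J_{\ker,i}(S) = \bigcap_{j \,:\, H_i^j(S) \neq \emptyset} \bigcap_{h \in H_i^j(S)} h^{-1}(J_j(S)).$$
The single algebraic observation that drives everything is that if $e \in E$ with $i(e)=i$, $t(e)=j$ and $h \in H_j^k(S)$, then $h \circ f \in H_i^k(S)$ for every $f \in \Gamma_e$; in particular the latter set is nonempty whenever the former is.

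For (i), I would fix an edge $e \in E$ with $i(e)=i$, $t(e)=j$, pick $f \in \Gamma_e$ and $y \in J_{\ker,i}(S)$, and verify $f(y) \in J_{\ker,j}(S)$. For any $k \in V$ with $H_j^k(S) \neq \emptyset$ and any $h \in H_j^k(S)$, the composition $h \circ f \in H_i^k(S)$ is one of the maps appearing in the intersection defining $J_{\ker,i}(S)$, so $y \in (h \circ f)^{-1}(J_k(S))$, which rearranges to $f(y) \in h^{-1}(J_k(S))$. Intersecting over all admissible $h$ and $k$ yields the claim. In the degenerate case where $j$ has no outgoing admissible word at all, I would note that $H_j(S) = \emptyset$ makes $J_j(S)=\emptyset$ (the empty family is trivially equicontinuous), and since the edge $e$ itself places $\Gamma_e$ inside $H_i^j(S)$, one gets $J_{\ker,i}(S) \subset f^{-1}(J_j(S)) = \emptyset$, making the forward-invariance inclusion vacuous.

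For (ii), I would take $y \in L_i$ and show $y \in h^{-1}(J_k(S))$ for every $k \in V$ with $H_i^k(S) \neq \emptyset$ and every $h \in H_i^k(S)$. Here Lemma \ref{inv_lemma} (ii) does all of the work: forward $S$-invariance of $(L_i)_{i \in V}$ yields $H_i^k(S)(L_i) \subset L_k$, so $h(y) \in L_k \subset J_k(S)$ by the hypothesis $L_k \subset J_k(S)$. Taking the intersection over all such $(k,h)$ gives $y \in J_{\ker,i}(S)$ directly.

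Neither part presents a substantive obstacle; the argument is essentially bookkeeping around the definition. The main (minor) care required is consistency with the convention $J_{\ker,i}(S) := \emptyset$ when vertex $i$ has no outgoing admissible paths, a case one treats by observing, for (ii), that then $J_i(S) = \emptyset$ already forces $L_i = \emptyset$; the analogous check for (i) was sketched above.
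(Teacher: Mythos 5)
Your proof is correct and follows essentially the same route as the paper, which simply remarks that the lemma is immediate from Lemma \ref{inv_lemma}: part (ii) is exactly the application of Lemma \ref{inv_lemma}(ii) you give, and part (i) is the direct unpacking of the definition via the observation that $h\circ f\in H_i^k(S)$ whenever $f\in\Gamma_e$ with $i(e)=i$, $t(e)=j$ and $h\in H_j^k(S)$. Your explicit treatment of the degenerate convention $J_{\ker,j}(S)=\emptyset$ is a welcome extra care the paper leaves implicit.
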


The proof is immediate by using Lemma \ref{inv_lemma}.
Now we define a condition that  plays an important role in section \ref{probfunc_section}.

\begin{definition}\label{sepcond_def}
We say that a GDMS $S= (V, E,(\Gamma_e)_{e \in E})$  satisfies the {\it backward separating condition} if $f_{1} ^{-1} (J_{t(e_1)}(S) ) \cap f_{2} ^{-1} (J_{t(e_2)}(S) ) = \emptyset$ for every  $e_1, e_2 \in E$ with the same initial vertex and for every $f_{1}\in \Gamma_{e_1}, f_{2} \in \Gamma_{e_2} $, except the case $e_1 =e_2$ and $f_{1} =f_{2}  $.
\end{definition}

\begin{definition}
	We say that a GDMS $S= (V, E,(\Gamma_e)_{e \in E})$ is {\it essentially non-deterministic} if there exist $e_1, e_2 \in E$ with $i(e_1)=i(e_2)$ and exist $f_1 \in \supp \tau_{e_1}, f_2 \in \supp \tau_{e_2}$ such that either $e_1 \neq e_2$ or $f_1 \neq f_2$.
\end{definition}

\begin{lemma}\label{sepcond_lemma}
Let $S= (V, E,(\Gamma_e)_{e \in E})$ be a GDMS which satisfies the backward separating condition.
If $S$ is essentially non-deterministic,
then   $J_{{\rm ker},j}(S) = \emptyset$ for some $j \in V$. 
Moreover, if, in addition to the assumption above, $S$ is  irreducible, then $\Bbb{J}_{{\rm ker} }(S)=\emptyset $.
\end{lemma}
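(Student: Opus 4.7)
The plan is to exploit the backward separating condition directly against the defining intersection of the kernel Julia set. Let $i \in V$ be the common initial vertex $i(e_1) = i(e_2)$ coming from the essentially non-deterministic hypothesis, with witnesses $f_1 \in \Gamma_{e_1}$ and $f_2 \in \Gamma_{e_2}$ such that $(e_1,f_1) \neq (e_2,f_2)$. I claim $J_{\ker, i}(S) = \emptyset$.

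Take any $z \in J_{\ker, i}(S)$. Since $f_1$ alone (viewed as a length-one composition along the admissible word $e_1$) belongs to $H_i^{t(e_1)}(S)$, and similarly $f_2 \in H_i^{t(e_2)}(S)$, the definition of $J_{\ker, i}(S)$ forces
\[ f_1(z) \in J_{t(e_1)}(S) \quad \text{and} \quad f_2(z) \in J_{t(e_2)}(S), \]
hence $z \in f_1^{-1}(J_{t(e_1)}(S)) \cap f_2^{-1}(J_{t(e_2)}(S))$. But the backward separating condition, applied to the pair $e_1, e_2$ with common initial vertex $i$ and to the maps $f_1, f_2$ (which are not the same pair), says this intersection is empty. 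This contradiction gives $J_{\ker, i}(S) = \emptyset$, establishing the first assertion.

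For the second assertion, assume moreover that $S$ is irreducible. By Lemma \ref{kerjulia_lemma} (i), the family $(J_{\ker, k}(S))_{k \in V}$ is forward $S$-invariant. Since we have just shown $J_{\ker, i}(S) = \emptyset$ for the particular $i$ above, Lemma \ref{inv_lemma} (iii) (the irreducibility dichotomy for forward-invariant families) immediately upgrades this to $J_{\ker, j}(S) = \emptyset$ for every $j \in V$, and therefore $\mathbb{J}_{\ker}(S) = \bigcup_{j \in V} J_{\ker, j}(S) \times \{j\} = \emptyset$, as desired.

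There is really no hard step here: the whole argument is a one-line unpacking of definitions, followed by invoking the previously established forward invariance and irreducibility lemmas. The only thing to be a little careful about is that the witnesses $f_1, f_2$ from the essentially non-deterministic hypothesis must genuinely lie in the one-step compositions $H_i^{t(e_1)}(S)$ and $H_i^{t(e_2)}(S)$, which is immediate from the definition of $H_i^j(S)$ applied to the admissible words of length one.
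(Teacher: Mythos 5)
Your proposal is correct and follows essentially the same argument as the paper: any $z\in J_{\ker,i}(S)$ would give $f_n(z)\in J_{t(e_n)}(S)$ for $n=1,2$ (since each $f_n$ is a length-one element of $H_i^{t(e_n)}(S)$), contradicting the backward separating condition, and the irreducible case follows from Lemma \ref{kerjulia_lemma} together with Lemma \ref{inv_lemma} exactly as in the paper's proof.
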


\begin{proof}
Since $S$ is essentially non-deterministic, there exist $e_1, e_2 \in E$ with $i(e_1)=i(e_2)=: j$ and exist $f_1 \in \supp \tau_{e_1}, f_2 \in \supp \tau_{e_2}$ such that either $e_1 \neq e_2$ or $f_1 \neq f_2$.
If there exists some $z \in J_{\ker, j}(S)$, then $f_n(z) \in J_{t(e_n)}(S)\ (n =1,2)$ by definition.
However, this implies that $f_{1} ^{-1} (J_{t(e_1)} (S))$ and $f_{2} ^{-1} (J_{t(e_2)} (S))$ share a point $z$, which contradicts  the backward separating condition.
If $S$ is irreducible, then $\Bbb{J}_{{\rm ker} }(S)=\emptyset $ by Lemma \ref{inv_lemma} and Lemma \ref{kerjulia_lemma}.
\end{proof}
\subsection{Skew product maps}\label{skewprod_subsec}
Let  $S= (V, E,(\Gamma_e)_{e \in E})$ be a GDMS on $Y$.
We define the skew product map associated with $S$ and investigate its dynamics.
We consider only admissible maps as in subsection \ref{julia_subsec}.

\begin{definition}\label{xi_nota}
We say that  a sequence $\xi = (\gamma_n , e_n )_{n =1}^N  \in (\cm(Y) \times E )^{N}$ is admissible with length $N$ if $e=(e_1, \dots , e_N) $ is admissible and $\gamma_n \in \Gamma_{e_n}$ for all $n \in \{1, \dots, N\}$.
Also, we say that a sequence $\xi = (\gamma_n , e_n )_{n \in \nn}  \in (\cm(Y) \times E )^{\nn}$  is admissible  if  $\gamma_n \in \Gamma_{e_n}$ and  $t(e_n)=i(e_{n+1})$ for all $n \in \nn$.
For any admissible sequence $\xi = (\gamma_n , e_n )_{n \in \nn}$ and for any $N, M \in \nn$ with $N>M$, we set $\gamma_{N,M}:=\gamma_N \circ \cdots \circ \gamma_M$ and  $\xi_{N,M} := (\gamma_n , e_n )_{n =M}^N $.
Let  $h=(\gamma_n,e_n)_{n=1}^N$ be an admissible sequence.
We regard $h$ as a map from $Y \times \{ i(e_1)\}$ to $Y\times \{ t(e_N)\}  $ by setting $h(y,i(e_1) ) :=(\gamma_{N,1}(y) , t(e_N)) \quad( y \in Y)$.
\end{definition}

\begin{definition}
We define the set of all admissible infinite sequences by
\begin{align*}
X(S):=\{ \xi = (\gamma_n , e_n )_{n \in \nn} & \in (\cm(Y) \times E )^{\nn} ;
 &\gamma_n \in \Gamma_{e_n} \text{ and } t(e_n)=i(e_{n+1}) \text{ for all } n \in \nn \}. 
\end{align*}
We denote by $X_i(S)$ the subset of $X(S)$ consisting of all admissible infinite sequences with initial vertex $i \in V$; thus any $\xi \in X_i(S)$ can be written as $\xi = (\gamma_n , e_n )_{n \in \nn}$ such that  $i(e_1)=i, \gamma_n \in \Gamma_{e_n} \text{ and } t(e_n)=i(e_{n+1}) \text{ for all } n \in \nn$.
We endow  $ (\cm(Y) \times E )^{\nn}$ with  the product topology, where $E$ has the discrete topology.
We endow $X(S)$ and $X_i(S)$ with the relative topology from  $ (\cm(Y) \times E )^{\nn}$.
Note that $X(S)$ and $X_i(S)$ are compact if $\Gamma_e$ is compact for all $e \in E$.
\end{definition}

\begin{definition}
For any $\xi = (\gamma_n , e_n )_{n \in \nn}  \in X(S)$, we denote by $F_\xi$ the set  of all points $y \in Y$ for which there exists a neighborhood $U$ in $Y$ such that the family of maps $\{\gamma_{N,1} = \gamma_N \circ \cdots \circ \gamma_1 ; \,N \in \nn\}$ is equicontinuous on $U$.
We call $F_\xi$ the Fatou set of $\xi$ and call the complement $J_\xi := Y \setminus F_\xi$  the Julia set of $\xi$.
Set $F^\xi : = \{\xi \} \times F_\xi  \subset X(S) \times Y $ and  $J^\xi := \{\xi \} \times J_\xi  \subset X(S) \times Y$.
\end{definition}

\begin{lemma}\label{limsupdiam_lemma}
\begin{enumerate}
\item For any $ \xi  \in X_i(S)$, we have  $J_\xi \subset J_i(S)$. 
\item We have $$\bigcup_{\xi \in X(S) } F^\xi \subset \{ (\xi , y ) \in   X(S) \times Y ; \lim_{\varepsilon \to 0} \sup_{n \in \nn} {\rm diam} (\gamma_{n,1} B(y , \varepsilon) )=0 \} .$$
\item For any $ \xi = (\gamma_n , e_n )_{n \in \nn} \in X_i (S)$, we have $J_\xi \subset \bigcap_{n \in \nn } \gamma_{n , 1} ^{-1} (J_{t(e_n)} (S) ) $.
\end{enumerate}
\end{lemma}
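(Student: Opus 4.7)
The plan is to verify each of the three inclusions directly from the definitions, using in each case the compositional structure $\gamma_{N,1}=\gamma_N\circ\cdots\circ\gamma_1$.

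For (i), I would simply observe that for $\xi=(\gamma_n,e_n)_{n\in\nn}\in X_i(S)$, each initial composition $\gamma_{N,1}$ is an element of $H_i(S)$ (the definition of $X_i(S)$ forces the admissibility required for membership in $H_i(S)$). So if $y\in F_i(S)$, there is a neighborhood $U$ of $y$ on which $H_i(S)$ is equicontinuous, and a fortiori $\{\gamma_{N,1}\}_{N\in\nn}$ is equicontinuous on $U$, giving $y\in F_\xi$. Contrapositively, $J_\xi\subset J_i(S)$.

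For (ii), I would unpack equicontinuity at a Fatou point. Suppose $(\xi,y)\in F^\xi$. By definition there is a neighborhood $U$ of $y$ on which $\{\gamma_{N,1}\}_{N\in\nn}$ is equicontinuous, hence equicontinuous at $y$ itself. Given $\varepsilon>0$, choose $\delta>0$ such that $B(y,\delta)\subset U$ and $d(\gamma_{N,1}(y'),\gamma_{N,1}(y))<\varepsilon$ for every $y'\in B(y,\delta)$ and every $N\in\nn$. By the triangle inequality, $\diam(\gamma_{N,1}B(y,\delta))\le 2\varepsilon$ for all $N$, so $\sup_{N}\diam(\gamma_{N,1}B(y,\delta))\le 2\varepsilon$. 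Letting $\delta\to 0$ yields the required limit.

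For (iii), the key idea is the factorization $\gamma_{N,1}=\gamma_{N,n+1}\circ\gamma_{n,1}$ for $N>n$, where $\gamma_{N,n+1}\in H_{t(e_n)}(S)$ since the subword $(e_{n+1},\dots,e_N)$ is admissible with initial vertex $t(e_n)$. I would prove the contrapositive: if $\gamma_{n,1}(y)\in F_{t(e_n)}(S)$ for some $n$, then $y\in F_\xi$. Pick a neighborhood $W$ of $\gamma_{n,1}(y)$ on which $H_{t(e_n)}(S)$ is equicontinuous, then pick a neighborhood $U$ of $y$ with $\gamma_{n,1}(U)\subset W$, using continuity of $\gamma_{n,1}$. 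For each $y'\in U$ and each $\varepsilon>0$, equicontinuity of $\{\gamma_{N,n+1}\}_{N>n}$ at $\gamma_{n,1}(y')$ followed by continuity of $\gamma_{n,1}$ at $y'$ gives equicontinuity of $\{\gamma_{N,1}\}_{N>n}$ at $y'$; the finitely many maps $\gamma_{1,1},\dots,\gamma_{n,1}$ trivially contribute an equicontinuous family at $y'$. Hence the full family $\{\gamma_{N,1}\}_{N\in\nn}$ is equicontinuous on $U$, so $y\in F_\xi$.

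None of these steps should be hard; the only mild subtlety is in (iii), where one must be careful that equicontinuity of $H_{t(e_n)}(S)$ on a neighborhood of $\gamma_{n,1}(y)$, combined with continuity of the fixed map $\gamma_{n,1}$, really does transfer to equicontinuity of the compositions on a full neighborhood of $y$ (not just at the single point $y$). The standard $\varepsilon$--$\delta$ argument above handles this cleanly.
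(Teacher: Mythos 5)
Your proposal is correct and takes essentially the same route as the paper's proof: part (i) via the inclusion $\{\gamma_{N,1}\}_{N\in\nn}\subset H_i(S)$, part (ii) via the triangle inequality at a point of equicontinuity, and part (iii) by the contrapositive using the factorization $\gamma_{N,1}=\gamma_{N,n+1}\circ\gamma_{n,1}$ with the tails lying in $H_{t(e_n)}(S)$. The extra care you take in (iii) about transferring equicontinuity to a full neighborhood of $y$ is exactly what the paper accomplishes by working on the open set $\gamma_{n,1}^{-1}(F_{t(e_n)}(S))$.
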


\begin{proof}
\begin{enumerate}
\item For any $\xi= (\gamma_n , e_n )_{n \in \nn} \in X_i(S)$, we have $\{ \gamma_N \circ \cdots \circ \gamma_1 ; \, N \in \nn\} \subset H_i(S).$
Thus $J_\xi \subset J_i(S)$. 
\item For any  $(\xi, y) \in F^\xi$, we have $y \in F_\xi$ and
set $ \xi = (\gamma_n , e_n )_{n \in \nn}$.
For any $\eta >0 $, there exists $\delta > 0$ such that $d(\gamma_{n,1}(y) , \gamma_{n,1}(y') ) < \eta$  for any $y' \in B(y,\delta)$ and any $n \in \nn$.
Now we take  $\varepsilon < \delta$.
Then \[ d(\gamma_{n,1}(y_1) , \gamma_{n,1}(y_2) ) \leq d(\gamma_{n,1}(y_1) , \gamma_{n,1}(y) ) + d(\gamma_{n,1}(y) , \gamma_{n,1}(y_2) ) < 2 \eta \]
 for any $y_1 , y_2 \in B(y , \varepsilon)$.
\item Suppose $\gamma_{m,1} (y) \in F_{t(e_m)}(S)$ for some $y \in J_\xi$ and $m \in \nn$.
Then  $\gamma_{m,1}^{-1}(F_{t(e_m)}(S))$ is a neighborhood of $y$.
Now $\{ \gamma_{n,m+1} \} _{n >m} \subset H_{t(e_m)}(S) $ implies that  $\{ \gamma_{n,1} \} _{n \in \nn} $ is equicontinuous on  $\gamma_{m,1}^{-1}(F_{t(e_m)}(S))$.
This contradicts the hypothesis that $y \in J_\xi$.
\end{enumerate}
\end{proof}

\begin{rem}
If $Y = \rs$ and  $\Gamma_e \subset \rat$ for all $e \in E$, then the equality holds in the statement of Lemma \ref{limsupdiam_lemma}(ii), where $\rat$ denotes the space of all non-constant rational maps.
\end{rem}

\begin{definition}
Let $S= (V, E,(\Gamma_e)_{e \in E})$ be a GDMS and
let $\sigma \colon X(S) \to X(S)$ be the (left) shift map.
Define the skew product map  $\tilde{f} : X(S) \times Y \to X(S) \times Y$ associated with $S$, by $\tilde{f}( \xi , y) = (\sigma (\xi) , \gamma_1 (y) )$ for any $\xi = (\gamma_n , e_n )_{n \in \nn} \in X(S)$ and any $y \in Y$.
Also we set $\tilde{J}(\tilde{f}) := \overline{\bigcup_{\xi \in X(S) }J^\xi}$, where the closure is taken in the product space $X(S) \times Y$, and we call this the skew product Julia set of $\tilde{f}$.
\end{definition}

\begin{lemma}\label{skewprod0_lemma}
The skew product map $\tilde{f}$ is continuous on $X(S) \times Y$ and $ J_\xi \subset \gamma_1^{-1} (J_{\sigma(\xi)})$ holds for any  $\xi = (\gamma_n , e_n )_{n \in \nn} \in X(S)$.
If  $\gamma_1$ is an open map, then $ J_\xi = \gamma_1^{-1} (J_{\sigma(\xi)})$.
\end{lemma}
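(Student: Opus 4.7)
The plan is to prove the three assertions in order, with the second inclusion being the substantive part.

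For continuity of $\tilde{f}$, I would argue coordinate-wise. The shift map $\sigma$ is continuous in the product topology on $(\cm(Y)\times E)^{\nn}$. For the second coordinate, I would show that the evaluation map $(\xi,y)\mapsto \gamma_1(y)$ is continuous: if $(\xi^{(k)},y_k)\to(\xi,y)$, then in particular the first coordinate of $\xi^{(k)}$ converges to $\gamma_1$ with respect to the uniform metric $\kappa$ on $\cm(Y)$, so the triangle estimate $d(\gamma_1^{(k)}(y_k),\gamma_1(y))\le \kappa(\gamma_1^{(k)},\gamma_1)+d(\gamma_1(y_k),\gamma_1(y))$ and continuity of $\gamma_1$ give the convergence.

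For the inclusion $J_\xi\subset \gamma_1^{-1}(J_{\sigma(\xi)})$, I would prove the contrapositive $\gamma_1^{-1}(F_{\sigma(\xi)})\subset F_\xi$. Fix $y$ with $\gamma_1(y)\in F_{\sigma(\xi)}$; then there is an open neighborhood $U$ of $\gamma_1(y)$ on which the family $\{\gamma_{N,2}\}_{N\ge 2}$ is equicontinuous (this family is exactly the iterates associated with $\sigma(\xi)$). By continuity of $\gamma_1$, the set $\gamma_1^{-1}(U)$ is an open neighborhood of $y$, and for any $y''\in\gamma_1^{-1}(U)$ I would use continuity of $\gamma_1$ at $y''$ combined with equicontinuity of $\{\gamma_{N,2}\}_{N\ge 2}$ at $\gamma_1(y'')\in U$ to bound $d(\gamma_{N,1}(y''),\gamma_{N,1}(y'''))=d(\gamma_{N,2}(\gamma_1(y'')),\gamma_{N,2}(\gamma_1(y''')))$ for $N\ge 2$, handling the $N=1$ case directly from continuity of $\gamma_1$.

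The reverse inclusion when $\gamma_1$ is open is the heart of the lemma. I would take $y\in F_\xi$ with neighborhood $V\ni y$ on which $\{\gamma_{N,1}\}_N$ is equicontinuous, and show $\gamma_1(y)\in F_{\sigma(\xi)}$ using $\gamma_1(V)$ as a candidate neighborhood (open by the open-map hypothesis). Given $z\in\gamma_1(V)$ and $\varepsilon>0$, I pick some $y'\in V$ with $\gamma_1(y')=z$ and invoke equicontinuity at $y'$ to obtain $\delta_0>0$ with $d(\gamma_{N,1}(y'),\gamma_{N,1}(y''))<\varepsilon$ for all $y''\in B(y',\delta_0)\cap V$ and all $N$. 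The key move is that $\gamma_1(B(y',\delta_0)\cap V)$ is open by the openness of $\gamma_1$, so it contains some ball $B(z,\eta)$; any $z'\in B(z,\eta)$ lifts to some $y''\in B(y',\delta_0)\cap V$ with $\gamma_1(y'')=z'$, and then the $N\ge 2$ estimate $d(\gamma_{N,2}(z),\gamma_{N,2}(z'))=d(\gamma_{N,1}(y'),\gamma_{N,1}(y''))<\varepsilon$ follows.

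The main obstacle is precisely this lifting step: without openness of $\gamma_1$, a point $z'$ arbitrarily close to $z$ in $Y$ need not come from any $y''$ close to $y'$ in $V$, so equicontinuity transferred along $\gamma_1$ cannot be recovered. Openness is exactly what repairs this gap, and I expect the two continuity ingredients (for $\tilde{f}$ and for the first inclusion) to be routine by comparison.
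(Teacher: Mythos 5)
Your proof is correct. Note that the paper states Lemma \ref{skewprod0_lemma} without any proof (it is treated as routine and immediately used in the next lemma), so there is no argument of the authors to compare against; your write-up supplies exactly what is missing. All three steps check out: the coordinatewise continuity argument via the metric $\kappa$, the contrapositive form $\gamma_1^{-1}(F_{\sigma(\xi)})\subset F_\xi$ using $\gamma_{N,1}=\gamma_{N,2}\circ\gamma_1$, and, for the reverse inclusion, the lifting of points $z'$ near $z=\gamma_1(y')$ to preimages $y''\in B(y',\delta_0)\cap V$ --- which, as you correctly identify, is precisely where openness of $\gamma_1$ is indispensable.
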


\begin{lemma}
With above terminology, we have $\tilde{J} (\tilde{f}) \subset \tilde{f}^{-1}(\tilde{J}(\tilde{f}))$.
If  $\Gamma_e \subset \ocm(Y)$ for all  $ e \in E$, then $\tilde{f}$ is open and $\tilde{J} (\tilde{f})  = \tilde{f}^{-1}(\tilde{J}(\tilde{f}))$  holds, where $\ocm(Y)$ denotes the set of all open continuous maps $Y \to Y$.
\end{lemma}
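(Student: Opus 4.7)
My plan is to deduce both assertions from Lemma \ref{skewprod0_lemma}. For the first inclusion, I pick any $(\xi, y) \in \bigcup_{\eta \in X(S)} J^\eta$ with $\xi = (\gamma_n, e_n)_{n \in \nn}$ and $y \in J_\xi$. Lemma \ref{skewprod0_lemma} gives $y \in \gamma_1^{-1}(J_{\sigma(\xi)})$, hence $\gamma_1(y) \in J_{\sigma(\xi)}$, so $\tilde{f}(\xi, y) = (\sigma(\xi), \gamma_1(y)) \in J^{\sigma(\xi)} \subset \tilde{J}(\tilde{f})$. Thus $\bigcup_\eta J^\eta \subset \tilde{f}^{-1}(\tilde{J}(\tilde{f}))$. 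Since $\tilde{f}$ is continuous, the right-hand side is closed, and passing to the closure yields $\tilde{J}(\tilde{f}) \subset \tilde{f}^{-1}(\tilde{J}(\tilde{f}))$.

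Now assume $\Gamma_e \subset \ocm(Y)$ for every $e \in E$. I will first verify that $\tilde{f}$ is open by a cylinder-plus-prepending argument. Because $E$ carries the discrete topology, a basic open neighborhood of any $(\xi, y) \in X(S) \times Y$ may be taken of the form $U \times V$ where $U = \bigl\{\eta = (\eta_n, f_n)_{n \in \nn} \in X(S) : \eta_n \in A_n \text{ and } f_n = e_n \text{ for } n = 1, \dots, N\bigr\}$ for open sets $A_n \subset \cm(Y)$ containing $\gamma_n$, and $V \subset Y$ is open with $y \in V$. The image $\sigma(U)$ is then the analogous cylinder in $X(S)$ constraining positions $1, \dots, N-1$ by $(A_{n+1}, e_{n+1})$, which is open in $X(S)$ and contains $\sigma(\xi)$, while $\gamma_1(V)$ is open in $Y$ since $\gamma_1 \in \ocm(Y)$. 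Given $(\eta', z') \in \sigma(U) \times \gamma_1(V)$, prepending $(\gamma_1, e_1)$ to $\eta'$ produces $\eta := ((\gamma_1, e_1), \eta'_1, \eta'_2, \dots) \in X(S)$; admissibility follows because the first edge of $\eta'$ equals $e_2$ by discreteness of $E$, and $t(e_1) = i(e_2)$ holds by admissibility of $\xi$. Choosing $z \in V$ with $\gamma_1(z) = z'$ yields $(\eta, z) \in U \times V$ with $\tilde{f}(\eta, z) = (\eta', z')$, so $\sigma(U) \times \gamma_1(V) \subset \tilde{f}(U \times V)$, proving openness of $\tilde{f}$.

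Finally, for the reverse inclusion, the equality $J_\xi = \gamma_1^{-1}(J_{\sigma(\xi)})$ from Lemma \ref{skewprod0_lemma} (available because every $\gamma_1$ is now open) yields the pointwise identity $\tilde{f}^{-1}\bigl(\bigcup_\eta J^\eta\bigr) = \bigcup_\xi J^\xi$, since $(\xi, y)$ lies in the left-hand side iff $\gamma_1(y) \in J_{\sigma(\xi)}$ iff $y \in J_\xi$. The key general fact is that for any continuous open map $\tilde{f}$ and any subset $A$ of the domain, preimages commute with closure: $\tilde{f}^{-1}(\overline{A}) = \overline{\tilde{f}^{-1}(A)}$, where the nontrivial inclusion $\subset$ uses openness: if $\tilde{f}(x) \in \overline A$ and $W$ is any open neighborhood of $x$, then $\tilde{f}(W)$ is an open set meeting $\overline A$, hence meeting $A$, so $W$ meets $\tilde{f}^{-1}(A)$. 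Applying this with $A = \bigcup_\eta J^\eta$ gives $\tilde{f}^{-1}(\tilde{J}(\tilde{f})) = \overline{\bigcup_\xi J^\xi} = \tilde{J}(\tilde{f})$. I expect the main technical step to be the cylinder-plus-prepending verification that $\tilde{f}$ is open; once that is in hand, the closure-preimage commutation cleanly upgrades the pointwise equality from Lemma \ref{skewprod0_lemma} to the desired identity of closed skew product Julia sets.
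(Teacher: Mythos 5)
Your proposal is correct and follows essentially the same route as the paper: both parts rest on Lemma \ref{skewprod0_lemma} ($J_\xi \subset \gamma_1^{-1}(J_{\sigma(\xi)})$ plus continuity for the inclusion, and the equality $J_\xi = \gamma_1^{-1}(J_{\sigma(\xi)})$ plus openness of $\tilde{f}$ for the reverse). The only difference is that you spell out the two steps the paper leaves implicit --- the cylinder argument for openness of $\tilde{f}$ and the fact that preimages under continuous open maps commute with closures --- and both of those verifications are sound.
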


\begin{proof}
By Lemma \ref{skewprod0_lemma}, we have $\tilde{f} (\tilde{J} (\tilde{f}) )  \subset \overline{\bigcup_{\xi \in X (S)} \tilde{f} (J^\xi )} \subset  \overline{\bigcup_{\xi \in X(S) } J^{ \sigma( \xi)} } \subset \tilde{J}(\tilde{f})$.
Thus, $\tilde{J}(\tilde{f}) \subset  \tilde{f}^{-1}(\tilde{J} (\tilde{f}) ) $.
If  $\Gamma_e \subset \ocm(Y)$ for all  $ e \in E$, then $\tilde{f}$ is open.
Combining this with   Lemma \ref{skewprod0_lemma},  we have  $\tilde{J}(\tilde{f}) \supset  \tilde{f}^{-1}(\tilde{J} (\tilde{f}) ) $.
\end{proof}
\subsection{Markov operators}\label{mrkvop_subsec}
Throughout this subsection, let $\mathbb{Y}$ be a compact metric space.
We discuss Markov operators on the space ${\rm C}(\mathbb{Y})$ of all  continuous complex functions on  $\mathbb{Y}$.
The space ${\rm C}(\mathbb{Y})$ is a Banach space with the supremum norm $\| \cdot \|_\mathbb{Y}$ and its normed dual ${\rm C}(\mathbb{Y})^*$  can be regarded as the set of all regular complex Borel measures on $\mathbb{Y}$ by the theorem of F. and M. Riesz.

\begin{notation}
We denote by $\mathfrak{M}_1(\mathbb{Y})$ the set of all regular Borel probability measures on $\mathbb{Y}$ and we define the weak*-topology on $\mathfrak{M}_1(\mathbb{Y}) \subset {\rm C}(\mathbb{Y})^*$.
Namely, $\mu_n \to \mu $ if and only if $\mu_n(\phi) \to \mu(\phi)$ for all $\phi \in {\rm C}(\mathbb{Y})$,
where we write $\mu(\phi) := \int_{\mathbb{Y}} \phi \, {\rm d} \mu$ for any $\mu \in \mathfrak{M}_1(\mathbb{Y} )$ and any $\phi \in {\rm C}(\mathbb{Y})$.
The space $\mathfrak{M}_1(\mathbb{Y})$ is compact by the Banach-Alaoglu theorem.
\end{notation}

\begin{rem}\label{metronm1y_rem}
We introduce a metric $d_0$ on $\mathfrak{M}_1(\mathbb{Y})$  as follows.
Take a countable dense subset  $\{ \phi_k \}_{k \in \nn}$ of ${\rm C}(\mathbb{Y})$ whose existence is due to the compactness of $\mathbb{Y}$.
We define the distance between two points $\mu_1, \mu_2 \in \mathfrak{M}_1(\mathbb{Y} )$ by $$d_0 (\mu_1,\mu_2) := \sum_{k \in \nn} \frac{1}{2^k} \frac{|\mu_1(\phi_k) - \mu_2(\phi_k)|}{1 + |\mu_1(\phi_k) - \mu_2(\phi_k)|}.$$
\end{rem}

\begin{definition}\label{mrkvop_def}
A linear operator $M \colon  {\rm C}( \mathbb{Y}) \to  {\rm C}( \mathbb{Y})$ is called a Markov operator if $M \1_{\mathbb{Y}} =\1_{\mathbb{Y}}$ and  $M \phi \geq 0$ for all $\phi \in  {\rm C}( \mathbb{Y})$ with $ \phi \geq 0$,
where we write $\psi \geq 0$ if $\psi(y)$ is non-negative real number  for all $y \in \mathbb{Y}$.
\end{definition}

\begin{lemma}
The operator norm of a Markov operator  $M \colon  {\rm C}( \mathbb{Y}) \to  {\rm C}( \mathbb{Y})$ is equal to one.
Thus the adjoint $M^* \colon  {\rm C}( \mathbb{Y})^* \to  {\rm C}( \mathbb{Y})^*$ satisfies that 
 $M^*(\mathfrak{M}_1(\mathbb{Y})) \subset \mathfrak{M}_1(\mathbb{Y})$, where
$$(M^* \mu ) \phi := \mu (M \phi  ), \quad \mu \in  {\rm C}( \mathbb{Y})^*, \phi \in  {\rm C}( \mathbb{Y}) .$$
\end{lemma}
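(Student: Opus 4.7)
The plan is to prove the two halves of the lemma in order, first bounding the operator norm from below and above, and then using these properties to deduce the adjoint assertion.

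First I would establish the lower bound $\|M\|\ge 1$. This is immediate: since $M\1_{\mathbb{Y}}=\1_{\mathbb{Y}}$ and $\|\1_{\mathbb{Y}}\|_{\mathbb{Y}}=1$, we have $\|M\1_{\mathbb{Y}}\|_{\mathbb{Y}}=1$. Before attacking the upper bound, I would record the subsidiary fact that $M$ maps real-valued functions to real-valued functions: for real $\phi\in {\rm C}(\mathbb{Y})$, decompose $\phi=\phi^{+}-\phi^{-}$ with $\phi^{\pm}\ge 0$, and then $M\phi=M\phi^{+}-M\phi^{-}$ is a difference of non-negative real functions by the positivity axiom.

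Next, for the upper bound $\|M\|\le 1$, I would first handle real $\phi$ with $\|\phi\|_{\mathbb{Y}}\le 1$: then $\1_{\mathbb{Y}}\pm\phi\ge 0$, so positivity of $M$ and linearity give $\1_{\mathbb{Y}}\pm M\phi\ge 0$, i.e.\ $|M\phi(y)|\le 1$ for all $y$. For complex $\phi$ with $\|\phi\|_{\mathbb{Y}}\le 1$, I would use the standard rotation trick: given $y_{0}\in\mathbb{Y}$, choose $\theta\in\mathbb{R}$ so that $e^{-i\theta}M\phi(y_{0})=|M\phi(y_{0})|$, and put $\psi=e^{-i\theta}\phi$, which still satisfies $\|\psi\|_{\mathbb{Y}}\le 1$. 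By linearity $M\psi(y_{0})=|M\phi(y_{0})|\in\mathbb{R}_{\ge 0}$, and since $M$ preserves real-valuedness, $M(\operatorname{Re}\psi)(y_{0})=\operatorname{Re}(M\psi)(y_{0})=|M\phi(y_{0})|$. Since $\operatorname{Re}\psi$ is real with $\|\operatorname{Re}\psi\|_{\mathbb{Y}}\le 1$, the real case gives $|M\phi(y_{0})|\le 1$. Taking the supremum over $y_{0}$ yields $\|M\phi\|_{\mathbb{Y}}\le 1$, hence $\|M\|=1$. I expect this reduction from complex-valued to real-valued functions to be the only non-routine point of the proof.

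Finally, for the adjoint assertion, since $M$ is a bounded operator with $\|M\|=1$, the adjoint $M^{*}$ is a well-defined bounded operator on ${\rm C}(\mathbb{Y})^{*}$ with $\|M^{*}\|=1$. Given $\mu\in\mathfrak{M}_{1}(\mathbb{Y})$, I would verify the two defining properties of a regular Borel probability measure for $M^{*}\mu$ via its action on ${\rm C}(\mathbb{Y})$. For positivity: any $\phi\in{\rm C}(\mathbb{Y})$ with $\phi\ge 0$ satisfies $M\phi\ge 0$, so $(M^{*}\mu)(\phi)=\mu(M\phi)\ge 0$, which by the Riesz representation makes $M^{*}\mu$ a positive regular Borel measure. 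For total mass: $(M^{*}\mu)(\1_{\mathbb{Y}})=\mu(M\1_{\mathbb{Y}})=\mu(\1_{\mathbb{Y}})=1$. Hence $M^{*}\mu\in\mathfrak{M}_{1}(\mathbb{Y})$, completing the proof.
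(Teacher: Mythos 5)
Your proof is correct, and the part that requires work --- the upper bound $\|M\|\le 1$ for complex-valued $\phi$ --- is done by a genuinely different route than the paper's. The paper fixes $y$, sets $A:=M(|\phi|^{2})(y)\le 1$ and $B:=|M\phi(y)|$, and expands $0\le M(|\phi-t\alpha|^{2})(y)\le 1-2Bt+t^{2}$ for all real $t$ with a suitable unimodular $\alpha$; the non-negativity of this quadratic in $t$ (a Cauchy--Schwarz-type argument for the positive operator $M$) forces $B\le 1$. You instead first prove the real case directly from $\1_{\mathbb{Y}}\pm\phi\ge 0$, then reduce the complex case to it by the rotation trick together with the observation that $M$ preserves real-valuedness, so that $\operatorname{Re}(M\psi)=M(\operatorname{Re}\psi)$. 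Both arguments secretly use the same fact that $M$ commutes with complex conjugation (the paper needs it to get $\alpha M\overline{\phi}(y)=B$ to produce the cross term $-2Bt$; you state it explicitly via the decomposition $\phi=\phi^{+}-\phi^{-}$), but yours avoids introducing $M(|\phi|^{2})$ and the discriminant step, so it is somewhat more elementary. You also spell out the verification that $M^{*}\mu$ is a positive functional of total mass one, which the paper leaves implicit after establishing $\|M\|=1$; that is a harmless and welcome addition.
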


\begin{proof}
Since $M \1_{\mathbb{Y}} =\1_{\mathbb{Y}}$,  the operator norm $\| M\| \geq 1$.
For any  $\phi \in  {\rm C}( \mathbb{Y})$ with $\| \phi \|_{\mathbb{Y}} \leq 1$,
we have $ 0 \leq |\phi | \leq \1$.
Fix any $y \in \mathbb{Y}$ and define $A:=M (| \phi | ^2)(y), B:= |M\phi(y)|$.
By the above  properties  of $M$, we have $A \leq 1$.
On the other hand, there exists a complex number $\alpha$ with modulus $1$ such that $\alpha M\overline{\phi}(y) =B$.
Then for any $t \in \mathbb{R}$, we have
\[ 0 \leq M( | \phi - t  \alpha |^2 )(y) \leq 1 -2 B\, t + t^2.\]
It follows that $ |M\phi(y)| = B \leq 1$ and $\|M \phi \|_{\mathbb{Y}} \leq 1$.
Hence $\| M \| =1$.
\end{proof}

\begin{rem}
For each $y \in \mathbb{Y}$, let  $\Phi (y)$  be the Dirac measure centered at $y$.
Note that $\Phi \colon \mathbb{Y} \to \mathfrak{M}_1 (\mathbb{Y} )$ is a topological embedding.
We regard $\mathbb{Y}$ as a subset of $ \mathfrak{M}_1 (\mathbb{Y} )$ by using  $\Phi$.
\end{rem}

\begin{definition}\label{fatouM_def}
For a Markov operator $M :  {\rm C}( \mathbb{Y}) \to  {\rm C}( \mathbb{Y})$, we consider the family $\{ (M^*)^n : \mathfrak{M}_{1}({\mathbb{Y}} )\to \mathfrak{M}_{1}({\mathbb{Y}}) \}_{n \in \nn} $  of iterations of the adjoint map $M^*$.
\begin{enumerate}
\item We denote by $F_{{\rm meas}} (M^*)$  the set of  all points $\mu \in  \mathfrak{M}_1 (\mathbb{Y} )$ for which there exists a neighborhood $\mathcal{U}$ in $\mathfrak{M}_1 (\mathbb{Y} )$ such that  the family $\{ (M^*)^n : \mathfrak{M}_{1}({\mathbb{Y}} )\to \mathfrak{M}_{1}({\mathbb{Y}}) \}_{n \in \nn} $   is  equicontinuous on $\mathcal{U}$.
\item We denote by $F_{{\rm meas}} ^0(M^*)$  the set of  all points $\mu \in  \mathfrak{M}_1 (\mathbb{Y} )$ satisfying that  the family $\{ (M^*)^n : \mathfrak{M}_{1}({\mathbb{Y}} )\to \mathfrak{M}_{1}({\mathbb{Y}}) \}_{n \in \nn} $   is  equicontinuous at $\mu$.
%
\item We denote by $F_{{\rm pt}} (M^*)$ the set of  all points $y \in  \mathbb{Y} $  for which there exists a neighborhood ${U}$ in $\mathbb{Y}$ such that  the family $\{ (M^*)^n |_{\mathbb{Y}}: \mathbb{Y} \to \mathfrak{M}_{1}({\mathbb{Y}}) \}_{n \in \nn} $ restricted to $\mathbb{Y} \subset \mathfrak{M}_1 (\mathbb{Y} )$  is equicontinuous on $U$.
\item We denote by $F_{{\rm pt}} ^0(M^*)$ the set of  all points $y \in  \mathbb{Y} $ satisfying that  the family $\{ (M^*)^n |_{\mathbb{Y}}: \mathbb{Y} \to \mathfrak{M}_{1}({\mathbb{Y}}) \}_{n \in \nn} $ restricted to $\mathbb{Y} \subset \mathfrak{M}_1 (\mathbb{Y} )$  is equicontinuous at $y$.
\end{enumerate}
\end{definition}

\begin{lemma}\label{fpt0_lemma}
For a Markov operator $M \colon {\rm C}( \mathbb{Y}) \to  {\rm C}( \mathbb{Y})$,
we have that $ y_0 \in  F_{{\rm pt}} ^0(M^*)$ if and only if $\{ M^n \phi \colon \mathbb{Y} \to \mathbb{C}\}_{n \in \nn} $ is equicontinuous at $y_0$ for all $\phi \in {\rm C}( \mathbb{Y})$.
\end{lemma}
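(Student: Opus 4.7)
The plan is to unwind the definition of $d_0$ on $\mathfrak{M}_1(\mathbb{Y})$ given in Remark \ref{metronm1y_rem}, observe the identity $((M^*)^n \delta_y)(\phi_k) = M^n\phi_k(y)$ for the embedded Dirac measures, and then pass between equicontinuity in the weak-$\ast$ topology and equicontinuity of real-valued functions via the countable dense family $\{\phi_k\}_{k\in\nn}\subset {\rm C}(\mathbb{Y})$. The norm bound $\|M^n\|\leq 1$ will let us replace an arbitrary $\phi$ by a suitable $\phi_k$ uniformly in $n$.

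For the $(\Leftarrow)$ direction, fix $\varepsilon>0$, choose $N$ so large that $\sum_{k>N} 2^{-k} < \varepsilon/2$, and then use the hypothesis applied to each $\phi_1,\dots,\phi_N$ to pick a single $\delta>0$ such that $d(y,y_0)<\delta$ forces $|M^n\phi_k(y)-M^n\phi_k(y_0)|<\varepsilon/2$ for every $n\in\nn$ and every $k\leq N$. Inserting this into the defining sum for $d_0((M^*)^n\delta_y,(M^*)^n\delta_{y_0})$ (splitting the sum at $k=N$ and using that each summand is at most $2^{-k}$) gives a bound of order $\varepsilon$ uniformly in $n$, which is exactly equicontinuity of $\{(M^*)^n|_{\mathbb{Y}}\}_n$ at $y_0$.

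For the $(\Rightarrow)$ direction, fix $\phi\in {\rm C}(\mathbb{Y})$ and $\varepsilon>0$, pick $\phi_k$ from the dense family with $\|\phi-\phi_k\|_{\mathbb{Y}}<\varepsilon/3$, and use $\|M^n\|=1$ to conclude $\|M^n\phi-M^n\phi_k\|_{\mathbb{Y}}<\varepsilon/3$ for every $n$. The hypothesis that $(M^*)^n|_{\mathbb{Y}}$ is equicontinuous at $y_0$ gives $\delta>0$ such that $d_0((M^*)^n\delta_y,(M^*)^n\delta_{y_0})$ is small enough to force $|M^n\phi_k(y)-M^n\phi_k(y_0)|<\varepsilon/3$ uniformly in $n$ when $d(y,y_0)<\delta$; here one has to invert the function $t\mapsto t/(1+t)$, but only for one index $k$, which is painless. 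A triangle inequality then bounds $|M^n\phi(y)-M^n\phi(y_0)|$ by $\varepsilon$ uniformly in $n$.

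No genuine obstacle is expected; the only minor care point is the conversion between the bounded weak-$\ast$ metric $d_0$ and raw differences $|M^n\phi_k(y)-M^n\phi_k(y_0)|$, which is handled by the monotone bijection $t\mapsto t/(1+t)$ on $[0,\infty)$. Everything else is a standard density-plus-triangle-inequality argument, combined with the fact that $y\mapsto \delta_y$ is a topological embedding of $\mathbb{Y}$ into $\mathfrak{M}_1(\mathbb{Y})$.
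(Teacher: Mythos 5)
Your proposal is correct and follows essentially the same route as the paper's proof: both directions rest on the countable dense family $\{\phi_k\}$, the identity $((M^*)^n\delta_y)(\phi_k)=M^n\phi_k(y)$, the bound $\|M^n\|=1$, and the splitting of the defining sum for $d_0$ (with the inversion of $t\mapsto t/(1+t)$ for a single index in the forward direction). The only differences are cosmetic choices of constants in the $(\Leftarrow)$ step.
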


\begin{proof}
Fix a countable dense  set $\{ \phi_k \}_{k \in \nn} \subset \mathrm{C} (\mathbb{Y})$ and let  $d_0$ be the metric  mentioned in Remark \ref{metronm1y_rem}.
Suppose that  $\{ M^n \phi \}_{n \in \nn} $ is equicontinuous at $y_0 \in \mathbb{Y}$ for all $\phi \in {\rm C}( \mathbb{Y})$. 
Take small $ \varepsilon > 0$.
Then there exists some $N \in \nn$ such that $\sum_{k=N+1}^{\infty} {2^{-k}} < \varepsilon$.
By our assumption, there exists $\delta >0$ such that for any $y \in B(y_0 , \delta) $, any $n \in \nn$ and any $k = 1, \dots , N$, we have $$| M^n \phi_k (y) -M^n \phi_k (y_0)| < \frac{\varepsilon / N}{1-\varepsilon / N}. $$
It follows that 
\begin{align*}
d_0( (M^*)^n ( \delta_y ),  (M^*)^n (\delta_{y_0})) &= \sum_{k \in \nn }  \frac{1}{2^k} \frac{| M^n\phi_k(y) - M^n \phi_k (y_0)|}{1 + | M^n \phi_k (y) - M^n \phi_k (y_0)|}   \leq   \varepsilon + \sum_{k=1}^N \varepsilon/N = 2\varepsilon ,
\end{align*}
and hence $y_0 \in F_{{\rm pt}} ^0(M^*)$.

Conversely, suppose that $y_0 \in F_{{\rm pt}} ^0(M^*)$, and take any $\phi \in {\rm C}(\mathbb{Y})$ and $\varepsilon > 0$.
Since $\{ \phi_k \}_{k \in \nn}$ is dense in ${\rm C}(\mathbb{Y})$,
there exists  $k \in \nn $ such that $\| \phi_k - \phi \|_{\mathbb{Y}} < \varepsilon$.
By $y_0 \in F_{{\rm pt}} ^0(M^*)$, there exists  $\delta > 0$ such that for any $y \in B(y_0 , \delta)$ and any  $n \in \nn$,
we have  $$d_0( (M^*)^n (\delta_y)  , (M^*)^n (\delta_{y_0}) ) < \frac{1}{2^k} \frac{\varepsilon}{ 1 + \varepsilon}. $$
It follows that $| M^n \phi_k (y) - M^n \phi_k (y_0) | < \varepsilon $ and
 \begin{align*}
 &\quad | M^n \phi(y) - M^n \phi (y_0) |  \\
  &\leq | M^n \phi(y) - M^n \phi_k(y) |+| M^n \phi_k(y) - M^n \phi_k (y_0) |+| M^n \phi_k(y_0) - M^n \phi(y_0) | 
 < 3 \varepsilon.
 \end{align*}
Thus $\{ M^n \phi \}_{n \in \nn} $ is equicontinuous at $y_0$.
\end{proof}

\begin{lemma}\label{FMiff_lemma}
For a Markov operator $M \colon {\rm C}( \mathbb{Y}) \to  {\rm C}( \mathbb{Y})$,
we have that $F_{{\rm meas}} (M^*) = \mathfrak{M}_{1}({\mathbb{Y}}) $  if and only if  $F_{{\rm pt}} ^0(M^*)=\mathbb{Y}$.
\end{lemma}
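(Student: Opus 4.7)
The plan is to prove the two implications separately. The easy direction is $F_{{\rm meas}}(M^*)=\mathfrak{M}_{1}(\mathbb{Y})\Rightarrow F_{{\rm pt}}^{0}(M^*)=\mathbb{Y}$; the substantive one is the converse.

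For the easy direction, I would invoke the remark that $\Phi\colon\mathbb{Y}\to\mathfrak{M}_{1}(\mathbb{Y})$, $y\mapsto\delta_{y}$ is a topological embedding. Fixing any $y_{0}\in\mathbb{Y}$, the hypothesis supplies a weak$^{*}$-neighborhood $\mathcal{U}$ of $\delta_{y_{0}}$ on which $\{(M^{*})^{n}\}_{n\in\nn}$ is equicontinuous; in particular the family is equicontinuous at $\delta_{y_{0}}$. Pulling back by the continuous map $\Phi$ gives equicontinuity of $\{(M^{*})^{n}|_{\mathbb{Y}}\}_{n\in\nn}$ at $y_{0}$, so $y_{0}\in F_{{\rm pt}}^{0}(M^*)$. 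Since $y_{0}$ was arbitrary, $F_{{\rm pt}}^{0}(M^*)=\mathbb{Y}$.

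For the converse, assume $F_{{\rm pt}}^{0}(M^*)=\mathbb{Y}$. By Lemma~\ref{fpt0_lemma}, for every $\phi\in{\rm C}(\mathbb{Y})$ the family $\{M^{n}\phi\}_{n\in\nn}$ is equicontinuous at each point of $\mathbb{Y}$, hence uniformly equicontinuous by compactness of $\mathbb{Y}$. Combined with the uniform bound $\|M^{n}\phi\|_{\mathbb{Y}}\le\|\phi\|_{\mathbb{Y}}$ coming from $\|M\|=1$, the Arzel\`a--Ascoli theorem shows that $K(\phi):=\overline{\{M^{n}\phi:n\in\nn\}}$ is a compact subset of ${\rm C}(\mathbb{Y})$ for every $\phi$. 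I would then record the standard fact that weak$^{*}$-convergence in $\mathfrak{M}_{1}(\mathbb{Y})$ implies convergence uniformly on compact subsets of ${\rm C}(\mathbb{Y})$: given compact $K\subset{\rm C}(\mathbb{Y})$ and $\mu_{0}\in\mathfrak{M}_{1}(\mathbb{Y})$, choose a finite $\eta/3$-net $\psi_{1},\dots,\psi_{L}\in K$ and set $\mathcal{U}=\{\mu:|\mu(\psi_{j})-\mu_{0}(\psi_{j})|<\eta/3,\ j=1,\dots,L\}$; a three-epsilon argument yields $\sup_{\psi\in K}|\mu(\psi)-\mu_{0}(\psi)|<\eta$ for $\mu\in\mathcal{U}$.

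To conclude, fix $\mu_{0}\in\mathfrak{M}_{1}(\mathbb{Y})$ and $\varepsilon>0$, and use the formula
\[
d_{0}((M^{*})^{n}\mu,(M^{*})^{n}\mu_{0})=\sum_{k\in\nn}\frac{1}{2^{k}}\frac{|(\mu-\mu_{0})(M^{n}\phi_{k})|}{1+|(\mu-\mu_{0})(M^{n}\phi_{k})|}.
\]
Choose $N$ with $\sum_{k>N}2^{-k}<\varepsilon/2$, and apply the previous paragraph to the compact set $K=\bigcup_{k\le N}K(\phi_{k})\subset{\rm C}(\mathbb{Y})$ with $\eta=\varepsilon/2$. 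This furnishes a single weak$^{*}$-neighborhood $\mathcal{U}$ of $\mu_{0}$ such that for all $\mu\in\mathcal{U}$ and all $n\in\nn$, $|(\mu-\mu_{0})(M^{n}\phi_{k})|<\varepsilon/2$ for $k\le N$, so $d_{0}((M^{*})^{n}\mu,(M^{*})^{n}\mu_{0})<\varepsilon$. This establishes equicontinuity of $\{(M^{*})^{n}\}_{n\in\nn}$ at every $\mu_{0}$; since the inequality holds on a fixed neighborhood $\mathcal{U}$ for every $\mu\in\mathcal{U}$ (by the same argument applied to any $\mu\in\mathcal{U}$ via the triangle inequality), the family is in fact equicontinuous on $\mathcal{U}$, giving $\mu_{0}\in F_{{\rm meas}}(M^*)$.

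The main obstacle is the step of upgrading pointwise equicontinuity of $\{M^{n}\phi\}$ on $\mathbb{Y}$ (an assertion about continuous functions on $\mathbb{Y}$) to weak$^{*}$-equicontinuity of $\{(M^{*})^{n}\}$ on $\mathfrak{M}_{1}(\mathbb{Y})$ (an assertion about the dual action). The bridge is precisely the compactness of $\{M^{n}\phi\}$ in ${\rm C}(\mathbb{Y})$ provided by Arzel\`a--Ascoli, together with the uniform-on-compacta behavior of the weak$^{*}$ topology on $\mathfrak{M}_{1}(\mathbb{Y})$.
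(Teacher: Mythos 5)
Your proof is correct, and it rests on the same two pillars as the paper's: Lemma \ref{fpt0_lemma} to translate the hypothesis into equicontinuity of $\{M^{n}\phi\}_{n}$ for each $\phi$, and the Arzel\`a--Ascoli theorem to convert that into norm-compactness of the orbit $\{M^{n}\phi\}_{n}$ in ${\rm C}(\mathbb{Y})$. The difference is organizational: the paper argues the substantive direction by contradiction, extracting a uniformly convergent subsequence $M^{n_j}\phi\to\psi$ for a single badly-behaved $\phi=\phi_k$ and deriving a contradiction from a three-term estimate, whereas you argue directly by isolating the standard fact that weak$^{*}$-convergence of probability measures is uniform on norm-compact subsets of ${\rm C}(\mathbb{Y})$ and applying it to $\bigcup_{k\le N}\overline{\{M^{n}\phi_k\}}$. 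Your version yields an explicit equicontinuity estimate at every $\mu_0$ at once; the cleanest way to finish is simply to note that equicontinuity at every point of $\mathfrak{M}_1(\mathbb{Y})$ makes the whole space a neighborhood witnessing membership in $F_{\rm meas}(M^*)$, rather than the slightly awkward ``re-apply the argument inside $\mathcal{U}$'' remark (the $\mathcal{U}$ you construct depends on $\varepsilon$, so it only certifies equicontinuity \emph{at} $\mu_0$ as stated). Both routes are essentially the same length; yours has the small advantage of producing a reusable quantitative statement, the paper's of requiring no auxiliary lemma.
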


\begin{proof}
It is easy to check that if  $F_{{\rm meas}} (M^*) = \mathfrak{M}_{1}({\mathbb{Y}}) $ then $F_{{\rm pt}} ^0(M^*)=\mathbb{Y}$.
Conversely, suppose that  $F_{{\rm pt}} ^0(M^*)=\mathbb{Y}$.
If there exists some $\mu \in \mathfrak{M}_{1}({\mathbb{Y}}) \setminus F_{{\rm meas}} (M^*) $,
then there exist  $\varepsilon >0$ such that for any $ j \in \nn$, there exist some $n_j \in \nn$ and some $ \mu_j \in \mathfrak{M}_{1}({\mathbb{Y}})$ such that $d_0(\mu ,\mu_j) \leq {j}^{-1}$ and $d_0((M^*)^{n_j} (\mu) ,(M^*)^{n_j}(\mu_j)) \geq \epsilon$.
Fix some $N \in \nn$ so that $\sum_{n=N+1}^{\infty} {2^{-n}} < \varepsilon /2$ holds and set $$\eta =\frac{\varepsilon /N}{1 - \varepsilon /N}. $$
Then there exists $\phi = \phi_k \in {\rm C}( \mathbb{Y})$ such that $| (M^*)^{n_j} (\mu)(\phi) - (M^*)^{n_j}(\mu_j)(\phi) | \geq \eta$ holds for infinitely many  $ j \in \nn$.
By  Lemma \ref{fpt0_lemma} and the assumption that $F_{{\rm pt}} ^0(M^*)=\mathbb{Y}$, 
the family $\{ M^n \phi \}_{n \in \nn}$ is equicontinuous on $\mathbb{Y}$.
According to the Arzel\'a-Ascoli theorem, we can assume that $\{ M^{n_j} \phi \}_{j \in \nn}$ converges to some $\psi \in {\rm C}(\mathbb{Y})$ uniformly on $\mathbb{Y}$.
Thus, for sufficiently large $j \in \nn$, we have
\begin{align*}
&\quad | (M^*)^{n_j} (\mu)(\phi) - (M^*)^{n_j}(\mu_j)(\phi) | \\
& \leq |\mu (M^{n_j}\phi) - \mu (\psi) | +|  \mu (\psi)  -\mu_j (\psi)| +|\mu_j (\psi) -   \mu_j (M^{n_j}\phi) | \\
& < \frac{\eta}{3} + \frac{\eta}{3}  + \ \frac{\eta}{3}  = \eta,
\end{align*}
which leads to a contradiction.
\end{proof}

\section{Settings of Markov random dynamical systems}\label{setting_sec}
In this section, we consider a  GDMS $S_\tau$ that is induced by a given family $\tau$ of measures;
we define probability measures on the space of infinite product of $\cm (Y)$ and define a Markov operator $M_\tau$ induced by $\tau$.
Furthermore,  we show that almost every random Julia set  is a null set if the kernel Julia set is empty (Proposition \ref{pathwisenull_prop}).

\begin{setting}\label{ms_setting}
Let $Y$ be a compact metric space and let $m \in \nn$.
Suppose that $m^2$ measures $ (\tau_{ij})_{i, j =1, \dots, m}$ on $\cm (Y)$ satisfy  $ \sum_{ j =1}^m \tau_{i j}(\cm (Y)) = 1$ for all $i =1, \dots, m$.
For a given  $\tau = (\tau_{ij})_{i, j =1, \dots, m}$, we consider the Markov chain on $Y \times \{1, \dots, m\}$ whose transition probability from $(y, i ) \in Y \times \{1, \dots, m\}$ to $ B \times \{ j \}$ is $ \tau_{ij}(\{ f \in {\rm CM}(Y) ; f(y) \in B\}) $, where $B$ is a Borel subset of $Y$  and $ j \in  \{1, \dots, m\}$.
We call this Markov chain the {\it Markov random dynamical system} (MRDS for short) induced by $\tau$.
\end{setting}

\begin{definition}\label{mstau_rem}
\begin{enumerate}
\item[(I)] When a family $\tau$ of measures is given as in Setting \ref{ms_setting}, we define the GDMS $S_\tau$ in  the following way.
Define the vertex set as $V:= \{ 1,2, \dots , m\} $ and the edge set as $$E :=\{ (i ,j) \in V \times V ; \, \tau _{ij}(\cm (Y))  > 0 \}. $$
Also, for each $e=(i,j) \in E$, we define $\Gamma_e :=  \supp \tau _{ij} $. 
Set $S_{\tau} :=(V,E,(\Gamma_e)_{e \in E})$, which we call the GDMS induced by $\tau$.
We define $i : E \to V$ (resp. $t : E \to V$) as the projection to the  first (resp. second) coordinate and we call  $i(e)$ (resp. $t(e)$) the initial (resp. terminal) vertex of $e \in E$.
\item[(II)] We say that $\tau = (\tau_{ij})_{i, j =1, \dots, m}$ is {\it irreducible} if $S_{\tau}$ is irreducible.
\end{enumerate}
\end{definition}

In the following, let $\tau$ be a family of measures as in Setting \ref{ms_setting}.
Set $\mathbb{Y} := Y \times V$.
We can define a metric on $\mathbb{Y}$ using the metric on $Y$ and regard the compact metric space  $\mathbb{Y}$ as   $m$ copies of $Y$: $\mathbb{Y} \cong \bigsqcup_{V} Y$. 

\begin{definition}\label{titil_def}
We define Borel probability measures $\tilde{\tau}_i \, (i \in V)$ on $X_i(S_\tau)$ as follows.
For $N$ Borel sets $A_n \,(n=1,\dots,N)$ of  $\cm(Y)$ and for  $(e_1, \dots , e_N) \in E^N$,
set $A'_n = A_n \times \{ e_n \} $.
We define the measure $\tilde{\tau}_i$ on $(\cm(Y) \times E )^{\nn}$ so that 
 \begin{align*}
 &\tilde{\tau}_i \left( A'_1 \times \cdots \times A'_N \times \prod_{N+1}^{\infty} (\cm(Y) \times E) \right) \\
 =& \begin{cases}
     \tau_{e_1}(A_1) \cdots \tau_{e_N}(A_N), & \text{if } (e_1, \dots , e_N) \text{ is admissible  with }  i(e_1)=i \\
     0, & \text{otherwise}
   \end{cases}
   \end{align*}
for each $i \in V$.
Note that $\supp \tilde{\tau}_i =X_i(S_\tau)$.
\end{definition}

\begin{lemma}\label{irred_lemma}
We set $p_{ij}=\tau_{ij}(\cm (Y) )$ and set $P = (p_{ij})_{i,j =1, \dots,m}$. 
Then the following statements hold.
\begin{enumerate}
\item A GDMS $S_\tau$ is irreducible if and only if the matrix $P$ is irreducible.
\item If $S_\tau$ is irreducible, then there exists the unique vecter $p=(p_1,\dots,p_m)$  such that $pP=p$, $\sum_{i \in V} p_i =1$ and $p_i > 0$ for all $ i\in V$.
\item Assume  $S_\tau$ is irreducible and define the probability measure $\tilde{\tau}$ on $(\cm(Y) \times E)^{\nn}$ as $\tilde{\tau}=\sum_{i=1}^m p_i \tilde{\tau}_i$, where the vector $p$ is as above.
Then $\supp \tilde{\tau} =X(S_\tau)$ and $\tilde{\tau} $ is an invariant probability measure with respect to the shift map on $X(S_\tau)$.
\end{enumerate}
\end{lemma}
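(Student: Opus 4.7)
The plan is to prove the three parts in order, using the Perron--Frobenius theorem for (i) and (ii) and a cylinder-set calculation for (iii).

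For (i), by Definition \ref{mstau_rem} the edge set is $E = \{(i,j) : p_{ij} > 0\}$, so the directed graph underlying $S_{\tau}$ is precisely the directed graph associated to the non-zero entries of $P$; strong connectedness of this graph is the definition of irreducibility of the non-negative matrix $P$, which settles (i). For (ii), the assumption $\sum_{j} p_{ij} = \sum_{j} \tau_{ij}(\cm(Y)) = 1$ makes $P$ a (right) stochastic matrix, so $1$ is automatically an eigenvalue. Applying the classical Perron--Frobenius theorem to the irreducible non-negative matrix $P^{\top}$ yields a strictly positive left eigenvector of $P$ for the eigenvalue $1$, unique up to a scalar; normalizing by $\sum_{i} p_{i} = 1$ pins $p$ down uniquely.

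For the support assertion in (iii), the sets $X_{i}(S_{\tau})$ are closed, pairwise disjoint, and partition $X(S_{\tau})$. Unpacking Definition \ref{titil_def}, one has $\supp \tilde{\tau}_{i} = X_{i}(S_{\tau})$: any cylinder neighborhood of $\xi = (\gamma_{n}, e_{n})_{n} \in X_{i}(S_{\tau})$ of the form $(A_{1} \times \{e_{1}\}) \times \cdots \times (A_{N} \times \{e_{N}\}) \times \prod_{n > N}(\cm(Y) \times E)$ with each $A_{n}$ a neighborhood of $\gamma_{n} \in \supp \tau_{e_{n}}$ has $\tilde{\tau}_{i}$-mass $\prod_{n} \tau_{e_{n}}(A_{n}) > 0$ by the very definition of support. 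Since each $p_{i} > 0$, this yields $\supp \tilde{\tau} = \bigcup_{i} X_{i}(S_{\tau}) = X(S_{\tau})$.

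For shift-invariance I would verify $\tilde{\tau} \circ \sigma^{-1} = \tilde{\tau}$ on cylinders and conclude by Dynkin's $\pi$--$\lambda$ theorem. On the cylinder
\begin{equation*}
B = (A_{1} \times \{e_{1}\}) \times \cdots \times (A_{N} \times \{e_{N}\}) \times \prod_{n > N}(\cm(Y) \times E),
\end{equation*}
both sides are zero unless $(e_{1}, \ldots, e_{N})$ is admissible, so assume it is and set $j := i(e_{1})$. From Definition \ref{titil_def} directly, $\tilde{\tau}(B) = p_{j} \prod_{n=1}^{N} \tau_{e_{n}}(A_{n})$. In $\sigma^{-1}(B)$ the first coordinate is unrestricted and subsequent coordinates inherit the restrictions of $B$; for $\tilde{\tau}_{i}$ the admissibility constraint forces that first coordinate to be $(f_{0}, e_{0})$ with $i(e_{0}) = i$, $t(e_{0}) = j$, and $f_{0} \in \cm(Y)$ arbitrary, giving $\tilde{\tau}_{i}(\sigma^{-1}(B)) = p_{ij} \prod_{n=1}^{N} \tau_{e_{n}}(A_{n})$. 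Summing over $i$ with weights $p_{i}$ and using $pP = p$ yields $\tilde{\tau}(\sigma^{-1}(B)) = \bigl(\sum_{i} p_{i} p_{ij}\bigr) \prod_{n} \tau_{e_{n}}(A_{n}) = p_{j} \prod_{n} \tau_{e_{n}}(A_{n}) = \tilde{\tau}(B)$. The only real obstacle is bookkeeping: handling consistently the case split on admissibility of $(e_{0}, e_{1}, \ldots, e_{N})$ and the free $\cm(Y)$-factor in the new first coordinate of $\sigma^{-1}(B)$; once this is set up the argument is purely algebraic and rests on the stationarity equation $pP = p$ established in (ii).
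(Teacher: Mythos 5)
Your proposal is correct and follows essentially the same route as the paper: the shift-invariance in (iii) is verified on cylinder sets via the computation $\tilde{\tau}(\sigma^{-1}(B)) = \sum_{i} p_{i}p_{i\,i(e_1)}\prod_{n}\tau_{e_n}(A_n) = p_{i(e_1)}\prod_{n}\tau_{e_n}(A_n)$ using $pP=p$, which is exactly the paper's argument. The paper proves only (iii) and leaves (i), (ii) and the support claim to the reader; your Perron--Frobenius and cylinder-neighborhood arguments for those parts are the standard ones and are fine.
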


\begin{proof}
We show (iii).
For a Borel set $\tilde{A}$ of $(\cm(Y) \times E )^{\nn}$, we prove $\tilde{\tau}(\sigma^{-1} (\tilde{A}))=\tilde{\tau}(\tilde{A})$.
We may assume 
 \[ \tilde{A}=A'_1 \times \cdots \times A'_N \times \prod_{N+1}^{\infty} (\cm(Y) \times E) , \quad A'_n = A_n \times \{ e_n \}. \] 
If the word $(e_1, \dots ,e_N)$ is not admissible, then $\tilde{\tau}(\sigma^{-1} (\tilde{A}))=0= \tilde{\tau}(\tilde{A})$.
If  $(e_1, \dots ,e_N)$ is admissible, then
 \[ \sigma^{-1}(\tilde{A}) =(\cm (Y) \times E ) \times \tilde{A} = \bigsqcup_{i \in V} \bigsqcup_{ i(e) =i} (\cm (Y) \times \{ e \} ) \times \tilde{A} \]
and hence
 \begin{align*}
 \tilde{\tau}(\sigma^{-1} (\tilde{A}))& =\sum_{i \in V}p_{i} p_{i i(e_1)} \tau_{e_1}(A_1 )\cdots \tau_{e_N}(A_N )  
  =p_{i(e_1)} \tau_{e_1}(A_1 )\cdots \tau_{e_N}(A_N ) 
=\tilde{\tau}(\tilde{A}).
 \end{align*}
\end{proof}

\begin{definition}\label{def-Mtau}
For $\tau = (\tau_{ij} )_{i,j \in V}$, we define the transition operator $M_{\tau}$ of $\tau$ as follows.
\[ M_{\tau} \phi (y,i) := \sum_{j \in V}  \int_{\Gamma_{ij}} \phi (\gamma (y),j) \,  {\rm d} \tau_{ij} (\gamma) ,  \quad (y, i ) \in \mathbb{Y}.\]
Here, $\phi$ is a complex-valued Borel measurable function on  $\mathbb{Y}$.
\end{definition}

\begin{rem}
For the transition operator  $M_{\tau}$,  the following statements hold.
\begin{enumerate}
\item If $\phi \in  {\rm C}( \mathbb{Y})$, then $M_\tau \phi \in  {\rm C}( \mathbb{Y})$.
\item  The transition operator  $M_{\tau}$ is a Markov operator on ${\rm C}( \mathbb{Y})$ in the sense of subsection \ref{mrkvop_subsec} (see Definition \ref{mrkvop_def}).
\end{enumerate}
\end{rem}

\begin{lemma}\label{transop_lemma}
If $ (y, i ) \in \mathbb{Y}, n \in \nn$ and $\phi \in {\rm C} (\mathbb{Y})$, then
\[ (M_{\tau} ^n \phi) (y,i) =  \int_{X_i(S_\tau)} \phi (\xi_{n,1} (y,i)) \,  {\rm d} \tilde{\tau}_{i} (\xi) .\]
\end{lemma}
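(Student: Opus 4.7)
The natural approach is induction on $n \in \nn$. Before starting, I would note the disintegration we need: the measure $\tilde{\tau}_i$ on $X_i(S_\tau)$ decomposes, at the level of the first coordinate, as
\[
\int_{X_i(S_\tau)} F(\xi)\, d\tilde{\tau}_i(\xi) = \sum_{j\in V} \int_{\Gamma_{ij}} \!\!\int_{X_j(S_\tau)} F\bigl((f,(i,j))\cdot \eta\bigr)\, d\tilde{\tau}_j(\eta)\, d\tau_{ij}(f),
\]
where $(f,(i,j))\cdot \eta$ denotes prepending the pair $(f,(i,j))$ to $\eta$. This identity holds on the generating cylinder sets by the very definition of $\tilde{\tau}_i$ (see Definition \ref{titil_def}), and then extends to all bounded Borel $F$ by the standard $\pi$--$\lambda$/monotone class argument together with the uniqueness of the Kolmogorov extension. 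Note also that when $(i,j)\notin E$ the measure $\tau_{ij}$ is zero, so the sum over $j \in V$ and the sum over edges $e$ with $i(e)=i$ coincide.

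Base case $n=1$: Unwinding Definition \ref{xi_nota}, for $\xi=(\gamma_k,e_k)_{k\in\nn}\in X_i(S_\tau)$ we have $\xi_{1,1}(y,i)=(\gamma_1(y),t(e_1))$. Applying the disintegration above with $F(\xi)=\phi(\xi_{1,1}(y,i))$ yields
\[
\int_{X_i(S_\tau)} \phi(\xi_{1,1}(y,i))\, d\tilde{\tau}_i(\xi)
= \sum_{j\in V}\int_{\Gamma_{ij}} \phi(\gamma(y),j)\, d\tau_{ij}(\gamma),
\]
which is precisely $M_\tau\phi(y,i)$ by Definition \ref{def-Mtau}.

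Inductive step: Assume the claim for $n$, and set $\psi:=M_\tau^n\phi \in \mathrm{C}(\mathbb{Y})$. By the inductive hypothesis, $\psi(z,j) = \int_{X_j(S_\tau)} \phi(\eta_{n,1}(z,j))\, d\tilde{\tau}_j(\eta)$ for every $(z,j)\in\mathbb{Y}$. Then
\[
M_\tau^{n+1}\phi(y,i) = M_\tau\psi(y,i) = \sum_{j\in V}\int_{\Gamma_{ij}} \psi(f(y),j)\, d\tau_{ij}(f) = \sum_{j\in V}\int_{\Gamma_{ij}}\!\!\int_{X_j(S_\tau)} \phi\bigl(\eta_{n,1}(f(y),j)\bigr)\, d\tilde{\tau}_j(\eta)\, d\tau_{ij}(f).
\]
The crucial observation is that for $\xi=(f,(i,j))\cdot\eta\in X_i(S_\tau)$ with $\eta\in X_j(S_\tau)$, the definition of $\xi_{n+1,1}$ gives
\[
\xi_{n+1,1}(y,i) = \eta_{n,1}(f(y),j).
\]
Substituting this identity into the right-hand side above and applying the disintegration (in reverse) to $F(\xi)=\phi(\xi_{n+1,1}(y,i))$ produces $\int_{X_i(S_\tau)}\phi(\xi_{n+1,1}(y,i))\, d\tilde{\tau}_i(\xi)$, completing the induction.

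The only real obstacle is bookkeeping: one must carefully justify the disintegration of $\tilde{\tau}_i$ and interchange the integrals, which requires Fubini (valid since $\phi$ is bounded and continuous and all measures are finite). Continuity of $M_\tau^n\phi$ follows inductively from the dominated convergence theorem and ensures that every step indeed stays inside $\mathrm{C}(\mathbb{Y})$, so that the hypothesis of the induction is preserved.
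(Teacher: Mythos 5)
Your proof is correct and follows essentially the same route as the paper: induction on $n$, with the inductive step reducing to the identification of the double integral $\sum_{j}\int_{\Gamma_{ij}}\int_{X_j(S_\tau)}$ with $\int_{X_i(S_\tau)}$ via the product structure of $\tilde{\tau}_i$. The only difference is packaging --- the paper verifies this identification on indicator functions $\1_{B\times\{j\}}$ and extends by approximation with simple functions, whereas you state the disintegration of $\tilde{\tau}_i$ as a standalone identity checked on cylinders and extended by a monotone class argument; these are interchangeable versions of the same measure-theoretic step.
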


For the meaning of $\xi_{n,1} (y,i)$,  see Notation \ref{xi_nota}.

\begin{proof}
We use induction on $n$.
If the statement holds for  $n=N$, then
\begin{align*}
(M_{\tau} ^{N+1} \phi) (y,i) 
&=\sum_{i_0 \in V}  \int_{\Gamma_{i i_0}}(M_{\tau} ^{N}  \phi) (\gamma_0 (y),i_0) \,  {\rm d} \tau_{i i_0} (\gamma_0) \\
&=\sum_{i_0 \in V}  \int_{\Gamma_{i i_0}} \int_{X_{i_0}(S_\tau)} \phi (\xi_{N,1} (\gamma_0(y),i_0)) \,  {\rm d} \tilde{\tau}_{i_0} (\xi){\rm d} \tau_{i i_0} (\gamma_0) .
\end{align*}
Set $ \xi_0=(\gamma_0,e_0), e_0=(i\, i_0)$ and $\xi =(\gamma_n,e_n)_{n \in \nn}, e=(e_n)_{n=1}^N$.
If $\phi = \1_{B \times \{ j \}} $, then 
\begin{align*}
&(M_{\tau} ^{N+1} \phi) (y,i) \\
=&\sum_{i_0 \in V} \tau_{i\,i_0 }\otimes   \tilde{\tau}_{i_0} ( \{ (\xi_0,\xi)\, ;\,  i_0=i(e) , t(e) =j \text{ and } \gamma_{N} \circ \dots \circ \gamma_{0} (y) \in B \}) \\
=& \sum_{e'} \tau_{e_0} \otimes \tau_{e_1} \otimes \cdots \otimes \tau_{e_N} (\{(\gamma_n)_{n=0}^N \in \cm(Y)^{N+1} \ ;\, \gamma_{N} \circ \dots \circ \gamma_{0} (y) \in B   \}) \\
=& \int_{X_i(S_\tau)} \phi (\xi_{N+1,1} (y,i)) \,  {\rm d} \tilde{\tau}_{i} (\xi).
\end{align*}
Here, the summation is taken over all admissible words  $e' =(e_n)_{n=0}^{N}$ with initial vertex $i$, terminal vertex $j$ and length $N+1$.
This completes the proof since any continuous function $\phi$ can be approximated by simple functions.
\end{proof}

\begin{lemma}\label{measure0_lemma}
If  $\tilde{\tau}_i ( \{ \xi \in X_i(S_\tau) ; y \in J_\xi \}  ) =0$ holds for  $(y,i) \in \mathbb{Y}$, then $ (y, i ) \in F_{\rm pt}^0 (M_\tau^*)$.
\end{lemma}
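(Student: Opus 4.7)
The plan is to combine Lemma \ref{fpt0_lemma}, which reduces membership in $F_{\rm pt}^0(M_\tau^*)$ to equicontinuity of the scalar iterates $\{M_\tau^n \phi\}_n$, with the integral representation provided by Lemma \ref{transop_lemma}. Fix $\phi \in {\rm C}(\mathbb{Y})$ and $\varepsilon > 0$; we may assume $\|\phi\|_{\mathbb{Y}} > 0$. Using the uniform continuity of $\phi$ on the compact space $\mathbb{Y}$, choose $\eta > 0$ such that $d(z_1, z_2) < \eta$ in $Y$ implies $|\phi(z_1, j) - \phi(z_2, j)| < \varepsilon/2$ for every $j \in V$. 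By Lemma \ref{transop_lemma},
\[
(M_\tau^n \phi)(y', i) - (M_\tau^n \phi)(y, i) = \int_{X_i(S_\tau)} \bigl(\phi(\xi_{n,1}(y', i)) - \phi(\xi_{n,1}(y, i))\bigr)\, {\rm d}\tilde{\tau}_i(\xi),
\]
so the task reduces to controlling this integrand uniformly in $n$ by splitting $X_i(S_\tau)$ into a ``good'' and a ``bad'' part.

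For each $\delta > 0$, I would set
\[
A_\delta := \bigl\{\xi = (\gamma_n, e_n)_n \in X_i(S_\tau);\ \forall n \in \nn,\ \forall y' \in Y\ \text{with}\ d(y', y) \leq \delta,\ d(\gamma_{n,1}(y'), \gamma_{n,1}(y)) \leq \eta\bigr\}.
\]
Each $A_\delta$ is Borel (in fact closed): continuity of composition in $(\cm(Y), \kappa)$ (which follows from the uniform continuity of continuous maps on the compact $Y$) together with the continuity of the projections $\xi \mapsto \gamma_n$ shows that the evaluation $\xi \mapsto \gamma_{n,1}(y')$ is continuous on $X_i(S_\tau)$, and continuity of each $\gamma_{n,1}$ in its spatial variable lets one replace $\{y' : d(y', y) \leq \delta\}$ by a countable dense subset, exhibiting $A_\delta$ as a countable intersection of closed sets.

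The family $\{A_\delta\}_{\delta > 0}$ increases as $\delta \downarrow 0$, and by the very definition of equicontinuity we have $\{\xi : y \in F_\xi\} \subset \bigcup_{\delta > 0} A_\delta$. Since the hypothesis gives $\tilde{\tau}_i(\{\xi : y \in F_\xi\}) = 1$, continuity of measure along any sequence $\delta_k \downarrow 0$ yields $\tilde{\tau}_i(A_\delta) \to 1$. I would then choose $\delta > 0$ small enough that $\tilde{\tau}_i(X_i(S_\tau) \setminus A_\delta) < \varepsilon / (4 \|\phi\|_{\mathbb{Y}})$ and split the displayed integral over $A_\delta$ and its complement. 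On $A_\delta$, for any $y'$ with $d(y', y) \leq \delta$ and any $n$, the integrand is bounded by $\varepsilon/2$ by the choice of $\eta$; on the complement the integrand is bounded by $2\|\phi\|_{\mathbb{Y}}$, contributing less than $\varepsilon/2$. Combining the two, $|(M_\tau^n \phi)(y', i) - (M_\tau^n \phi)(y, i)| < \varepsilon$ uniformly in $n$, so $\{M_\tau^n \phi\}_n$ is equicontinuous at $(y, i)$, and Lemma \ref{fpt0_lemma} concludes the proof.

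The main obstacle will be converting the pointwise-in-$\xi$ equicontinuity information, which has $\xi$-dependent moduli, into a single integrated bound that is uniform in $n$; the key device is to pool together all $\xi$'s sharing a uniform modulus of equicontinuity at $y$ through the sets $A_\delta$, and the required measurability is supplied by the continuity of composition in the metric $\kappa$ together with the countable-dense-set reduction.
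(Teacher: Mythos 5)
Your proof is correct and follows essentially the same route as the paper: reduce to equicontinuity of $\{M_\tau^n\phi\}_n$ at $(y,i)$ via Lemma \ref{fpt0_lemma}, use the integral representation of Lemma \ref{transop_lemma}, and split $X_i(S_\tau)$ into a nearly-full-measure set on which the modulus of equicontinuity of $\{\gamma_{n,1}\}_n$ at $y$ is uniform in $\xi$, plus a small remainder controlled by $2\|\phi\|_{\mathbb{Y}}$. The only cosmetic difference is that the paper obtains the uniform set by citing Egoroff's theorem (applied to $\sup_n\diam(\gamma_{n,1}B(y,\eta))\to 0$ from Lemma \ref{limsupdiam_lemma}), whereas you construct the monotone exhaustion $A_\delta$ explicitly and invoke continuity of measure, which amounts to an inlined proof of the same Egoroff-type step together with the measurability check it implicitly requires.
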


\begin{proof}
By assumption, we have  $ y \in F_\xi$ for $\tilde{\tau}_{i}$-almost every  $\xi \in X_{i}(S_{\tau }) .$
For such $\xi =(\gamma_n , e_n )_{n \in \nn}$, we have $\lim_{\eta \to 0} \sup_{n \in \nn} {\rm diam} (\gamma_{n,1} B(y , \eta) )=0 $ by Lemma \ref{limsupdiam_lemma}.
For any $\phi \in {\rm C}(\mathbb{Y})$ and any $\varepsilon >0$,
the function $\phi$ is uniformly continuous on the compact space $\mathbb{Y}$.
Thus there exists  $\delta_1 > 0$ such that for any  $z_1, z_2 \in \mathbb{Y}$ with $d(z_1, z_2 ) < \delta_1$, we have $| \phi(z_1) - \phi (z_2) | < \varepsilon$.
By Egoroff's theorem, there exists a Borel set $B \subset X_i (S_\tau)$ with $\tilde{\tau}_i(B^c) = \tilde{\tau}_i(X_i(S_\tau) \setminus B) < \varepsilon$ satisfying the following property;
there exists  $\eta_0 >0$ such that $\sup_{n \in \nn} {\rm diam} (\gamma_{n,1} B(y , \eta_0) ) < \delta_1$  for any $\xi =(\gamma_n , e_n )_{n \in \nn} \in B$.
Hence, for any  $z_1=(y_1,i)$  whose distance from $z=(y,i)$ is less than $\eta_0$, we have
\begin{align*}
| (M_\tau^n \phi) (z) - (M_\tau^n \phi) (z_1) | 
& \leq \int_{X_i(S_\tau)}| \phi (\xi_{n,1} (z)) -   \phi (\xi_{n,1} (z_1))| \,  {\rm d} \tilde{\tau}_{i} (\xi)  \\
& = \int_B + \int_{B^c} \leq \varepsilon \tilde{\tau}_i(B) + 2 \| \phi \| \tilde{\tau}_i(B^c) 
 \leq \varepsilon ( 1 +  2 \| \phi \|).
\end{align*}
By Lemma \ref{fpt0_lemma}, we have $z= (y, i ) \in F_{\rm pt}^0 (M_\tau^*)$.
\end{proof}

\begin{cor}
Let $\lambda$ be a Borel finite measure on  $\mathbb{Y}$.
If $\lambda (J_\xi) =0$  for all $i \in V$ and for $\tilde{\tau}_i $ -a.e. $\xi \in X_i(S_\tau)$, then $\lambda(\mathbb{Y} \setminus F_{\rm pt}^0 (M_\tau ^*) )=0$. 
\end{cor}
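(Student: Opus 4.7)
The plan is to combine Lemma \ref{measure0_lemma} with a Fubini argument on the product space $Y \times X_i(S_\tau)$. First I would decompose $\lambda$ by the $V$-coordinate: for each $i \in V$, set $\lambda_i(B) := \lambda(B \times \{i\})$ for Borel $B \subset Y$. Since $\mathbb{Y} \setminus F_{\rm pt}^0(M_\tau^*) = \bigsqcup_{i \in V}(B_i \times \{i\})$ with $B_i := \{y \in Y : (y,i) \notin F_{\rm pt}^0(M_\tau^*)\}$, it suffices to show $\lambda_i(B_i) = 0$ for every $i \in V$. By Lemma \ref{measure0_lemma},
\[ B_i \subset \widetilde{B}_i := \{y \in Y : \tilde{\tau}_i(\{\xi \in X_i(S_\tau) : y \in J_\xi\}) > 0\}, \]
so reducing the problem to showing $\lambda_i(\widetilde{B}_i) = 0$ is the right target.

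Next I would apply Fubini's theorem on $(Y \times X_i(S_\tau),\, \lambda_i \otimes \tilde{\tau}_i)$ to the indicator of the set
\[ A_i := \{(y,\xi) \in Y \times X_i(S_\tau) : y \in J_\xi\}. \]
Once measurability of $A_i$ is established, this yields
\[ \int_Y \tilde{\tau}_i(\{\xi : y \in J_\xi\})\, d\lambda_i(y) \;=\; \int_{X_i(S_\tau)} \lambda_i(J_\xi)\, d\tilde{\tau}_i(\xi). \]
Interpreting the hypothesis ``$\lambda(J_\xi) = 0$'' as $\lambda_i(J_\xi) = 0$ for $\tilde{\tau}_i$-a.e.\ $\xi$ (where $J_\xi$ is regarded as the $i$-slice $J_\xi \times \{i\}$), the right-hand side vanishes. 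Therefore the integrand on the left is zero $\lambda_i$-a.e., forcing $\lambda_i(\widetilde{B}_i) = 0$.

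The step I expect to be the main obstacle is confirming that $A_i$ is jointly Borel measurable in $(y,\xi)$, which is needed to invoke Fubini. I would handle this by expressing $F_\xi$ through countable quantifiers: $y \in F_\xi$ if and only if
\[ \forall k \in \nn,\ \exists l \in \nn,\ \forall N \in \nn,\ \forall z \in Y,\ d(y,z) < 1/l \Rightarrow d(\gamma_{N,1}(y), \gamma_{N,1}(z)) < 1/k. \]
Using a countable dense subset of $Y$ to replace the $\forall z$ (permissible by continuity of each $\gamma_{N,1}$), each elementary condition becomes a Borel subset of $Y \times X_i(S_\tau)$, because the evaluation map $(\gamma,z) \mapsto \gamma(z)$ is continuous on $\cm(Y) \times Y$ (and hence finite compositions depend Borel-measurably on $\xi = (\gamma_n, e_n)_{n \in \nn}$). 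Assembling these through countable intersections and unions shows $A_i$ is Borel, and the Fubini step then goes through cleanly.
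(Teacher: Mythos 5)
Your proposal is correct and follows exactly the route the paper takes: the paper's own proof is the one-line remark that the statement ``follows easily from Lemma \ref{measure0_lemma} and Fubini's theorem,'' and your argument is precisely the fleshed-out version of that, with the vertex-wise decomposition, the reduction via Lemma \ref{measure0_lemma}, and the Fubini step (including the measurability of $\{(y,\xi): y\in J_\xi\}$, which the paper leaves implicit) all filled in correctly.
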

 
 \begin{proof}
The statement  follows easily from Lemma \ref{measure0_lemma} and Fubini's theorem.
 \end{proof}

\begin{lemma}\label{lem-M}
Let $(U_j)_{j \in V}$ be a forward $S_\tau$-invariant family such that  each $U_j$ is a non-empty open subset of $Y$.
Set $L_{{\rm {\rm ker}},j} = \bigcap_{k \in V\colon H_j^k (S_\tau)  \neq \emptyset} \bigcap_{h \in H_j^k (S_\tau)} h^{-1} (Y \setminus U_k)$ for  $j \in V$.
Also,  for $(y,i) \in \mathbb{Y}$, we set  $$E=\{ (\gamma_n , e_n )_{n \in \nn} \in  X_i(S_\tau) ; y \in \bigcap_{n\in \nn} \gamma_{n,1}^{-1} (Y \setminus U_{t(e_n)}) \} .$$
Then $  d(\gamma_{n,1} (y) , {L}_{{\rm {\rm ker}}, t(e_n)}) \to 0 \, (n \to \infty)$ for $\tilde{\tau}_i $ -a.e. $(\gamma_n , e_n )_{n \in \nn}  \in E$, where $d(a , \emptyset) :=\infty \ (a \in Y)$.
\end{lemma}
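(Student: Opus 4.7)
The plan is to combine a Markov-type argument with a uniform ``escape probability'' estimate on compact sets bounded away from the kernel. For each $\varepsilon > 0$ and each $j \in V$, let
$$A_{\varepsilon,j} := \{ z \in Y : d(z, L_{\ker,j}) \geq \varepsilon \},$$
with the convention $A_{\varepsilon, j} = Y$ whenever $L_{\ker, j} = \emptyset$. Since $L_{\ker, j}$ is closed, $A_{\varepsilon, j}$ is compact. For every $z \in A_{\varepsilon, j}$ one has $z \notin L_{\ker, j}$, so by definition there exist $k \in V$, an admissible word $(e_1, \dots, e_N)$ from $j$ to $k$, and maps $f_n \in \Gamma_{e_n}$ with $f_N \circ \cdots \circ f_1(z) \in U_k$. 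Openness of $U_k$ and continuity of composition imply that this escape configuration persists on an open neighborhood of $z$ in $Y$ and on an open product neighborhood of $(f_n)_n$ in $\prod_n \Gamma_{e_n}$; such neighborhoods of $(f_n)_n$ have positive $\tau_{e_1} \otimes \cdots \otimes \tau_{e_N}$-mass because each $f_n$ lies in $\supp \tau_{e_n}$. A finite covering of $A_{\varepsilon, j}$, together with $\#V < \infty$, then yields constants $N_\varepsilon \in \nn$ and $\delta_\varepsilon > 0$ such that
$$\tilde{\tau}_j\bigl(\{ \xi = (\gamma_n, e_n)_n \in X_j(S_\tau) : \gamma_{n,1}(z) \in U_{t(e_n)} \text{ for some } 1 \le n \le N_\varepsilon \}\bigr) \geq \delta_\varepsilon$$
for every $j \in V$ and every $z \in A_{\varepsilon, j}$.

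Next I would apply the Markov property of $\tilde{\tau}_i$ on $X_i(S_\tau)$. For $\xi = (\gamma_n, e_n)_n \in X_i(S_\tau)$ write $z_n := \gamma_{n,1}(y)$, $j_n := t(e_n)$, and set
$$E_\varepsilon := E \cap \{\xi : z_n \in A_{\varepsilon, j_n} \text{ for infinitely many } n\}.$$
Define stopping times $T_1 := \inf\{n : z_n \in A_{\varepsilon, j_n}\}$ and $T_{\ell+1} := \inf\{n \geq T_\ell + N_\varepsilon : z_n \in A_{\varepsilon, j_n}\}$; on $E_\varepsilon$ every $T_\ell$ is finite. Conditional on the first $T_\ell$ coordinates of $\xi$, the post-$T_\ell$ process is distributed as $\tilde{\tau}_{j_{T_\ell}}$ applied to the state $z_{T_\ell}$; since $z_{T_\ell} \in A_{\varepsilon, j_{T_\ell}}$, the uniform estimate above gives conditional probability at least $\delta_\varepsilon$ that the orbit enters $\bigsqcup_{k \in V} U_k \times \{k\}$ somewhere in the $N_\varepsilon$ steps following $T_\ell$. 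On $E$ this never happens, so iterating produces $\tilde{\tau}_i(E_\varepsilon) \le (1 - \delta_\varepsilon)^\ell$ for every $\ell \in \nn$, whence $\tilde{\tau}_i(E_\varepsilon) = 0$.

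Finally, taking a countable union over $\varepsilon = 1/m$ gives $\tilde{\tau}_i\bigl(\bigcup_{m \in \nn} E_{1/m}\bigr) = 0$, and on the complement of this null set inside $E$ one has $d(z_n, L_{\ker, j_n}) \to 0$ as $n \to \infty$, which is the conclusion. The main obstacle I anticipate is the uniform lower bound $\delta_\varepsilon$ in the first step: extracting a positive, $j$-independent lower bound requires simultaneously using the openness of the $U_k$'s, the continuity of composition, compactness of $A_{\varepsilon, j}$, and the fact that open subsets of $\Gamma_e$ meeting $\supp \tau_e$ carry strictly positive $\tau_e$-mass, together with finiteness of both $V$ and $E$.
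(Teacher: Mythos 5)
Your proof is correct and follows essentially the same strategy as the paper's: compactness of the set of points at distance $\geq \varepsilon$ from $L_{{\rm ker},j}$ yields a uniform positive probability $\delta_\varepsilon$ of entering $\bigcup_{k}U_k\times\{k\}$ within a bounded number of steps, which is incompatible with membership in $E$ except on a $\tilde{\tau}_i$-null set. The only difference is the final bookkeeping: the paper pads all escape words to a common length $l$ using forward $S_\tau$-invariance and applies Borel--Cantelli after showing $\sum_n \tilde{\tau}_i(A(\varepsilon,n)) \leq l/\delta$, whereas you use stopping times and the geometric bound $(1-\delta_\varepsilon)^\ell$ --- both are standard and interchangeable here.
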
 

 \begin{proof}
Let $z=(y,i) \in \mathbb{Y}$,  $\mathbb{U} := \bigcup_{j \in V } U_j \times \{ j \}$ and $\mathbb{L}_{\rm ker} := \bigcup_{j \in V } L_{{\rm {\rm ker}},j} \times \{ j \}$.
For any $\varepsilon  > 0$ and for any $n \in \nn$, we set
 \begin{align*}
A(\varepsilon , n)  &:= \{ \xi \in E \, ;\, \xi_{n,1}(z) \notin \mathbb{U} \cup B(\mathbb{L}_{\rm ker},\varepsilon ) \} ,\\
C(\varepsilon ) & := \{ \xi \in E \, ;\, \exists N \in \nn \text{ such that } \xi_{n,1}(z) \in B(\mathbb{L}_{\rm ker},\varepsilon ) \text{ for any } n \geq N \}.
 \end{align*}
Here, 
$B (\mathbb{L}_{\rm ker}, \varepsilon) = \{ y \in \mathbb{Y} ; \, d(y, \mathbb{L}_{\rm ker} ) < \varepsilon \}  $
 and we set
 $B(\mathbb{L}_{\rm ker},\varepsilon ) = \emptyset$ if $\mathbb{L}_{\rm ker} = \emptyset$.
We  prove that  $\tilde{\tau}_i( E \setminus C(\varepsilon ) ) =0$ for any  $\varepsilon >0$.
For this purpose, fix a small $\varepsilon >0$.
It suffices to show $\sum_{n \in \nn} \tilde{\tau}_i(A(\varepsilon , n) ) < \infty$. 
For, since $ E \setminus C(\varepsilon ) =\limsup_{n \to \infty} A(\varepsilon , n)  $,  the statement follows by combining these with the Borel-Cantelli lemma.
 
In order to show $\sum_{n \in \nn} \tilde{\tau}_i(A(\varepsilon , n) ) < \infty$, we set
 $ \mathbb{K} := \mathbb{Y} \setminus (   \mathbb{U} \cup B(\mathbb{L}_{\rm ker},\varepsilon ) )$.
Then there exist subsets $K_j \subset  \bigcup_{k \in V\colon  H_j^k (S_\tau) \neq \emptyset} \bigcup_{h \in H_j^k (S_\tau)} h^{-1}(U_k) ,\, j\in V, $ such that $\mathbb{K} =  \bigcup_{j \in V } K_j \times \{ j \}$.
Since $\mathbb{K}$ is compact,
there exist finitely many open sets $W_q $ in $\mathbb{Y}$ ($q =1, \dots,p$) and finitely many admissible sequences $g_q \in ( \cm(Y) \times E )^{l_q}$ ($q =1, \dots,p$) such that $\mathbb{K} \subset \bigcup_{q=1}^p W_q$ and $g_q (W_q) \subset \mathbb{U}$.
Note that
we may assume there exists $l \in \nn$ such that $l = l_q$ for all $q =1, \dots,p$ since  $(U_j)_{j \in V}$ is forward  $S_\tau$-invariant.
Then, for each $q =1, \dots,p$, there exists an open neighborhood $O_q \subset  ( \cm(Y) \times E )^l$ of $g_q$ such that  $g(W_q) \subset \mathbb{U}$ for all $g \in O_q$.
We put $\tilde{O}_q := O_q \times \prod_{l +1} ^\infty( \cm(Y) \times E )$ and $\delta :=\min_{q=1, \dots , p} \tilde{\tau}_{i_q}  (\tilde{O}_q)   > 0$, where  $i_q \in V$ is the initial vertex of $g_q$.
 
For each $k \geq 0$ and $r= 0, \dots , l-1$, we set
 \begin{align*}
 I(k,r) := \{ \xi \in X_i (S_\tau) \, ;\, \xi_{k l +r ,1}(z) \in \mathbb{K} \} \text{ and }
 H(k,r) := \{ \xi \in I(k,r) \, ;\, \xi_{(k+1) l +r ,1}(z) \in \mathbb{U} \}.
 \end{align*}
Here, $I(0,0):=\emptyset$.
If $k \neq k'$, then $ H(k,r) \cap H(k',r) = \emptyset$.
Since $\mathbb{K} \subset \bigcup_{q=1}^p W_q$, there exist  $s$ Borel sets $B_1 ,\dots , B_s$ on $\mathbb{Y}$ for some $s \in \nn$  with the following property; 
$\mathbb{K} = \bigsqcup_{t =1}^s B_t$, where $\bigsqcup$ denotes the disjoint union, and for each $t=1, \dots, s$  there exists  $q(t) \in \{ 1, \dots ,r \}$ 
such that $B_t \subset W_{q(t)}$.
Then, we have
 \begin{align*}
 \tilde{\tau}_i(H(k,r))
  &=  \sum_{t=1}^s \tilde{\tau}_i (\{ \xi \in X_i (S_\tau)\, ; \,  \xi_{k l +r ,1}(z) \in B_t  , \xi_{(k+1) l +r ,1}(z) \in \mathbb{U}  \}) \\
 &\geq \sum_{t=1}^s \tilde{\tau}_i (\{ \xi \in X_i (S_\tau)\, ; \,  \xi_{k l +r ,1}(z) \in B_t  , \xi_{(k+1) l +r ,k l +r +1} \in O_{q(t)}  \}) \\
& \geq \sum_{t=1}^s \tilde{\tau}_i (\{ \xi \in X_i (S_\tau)\, ; \,  \xi_{k l +r ,1}(z) \in B_t  \} ) \delta 
=  \tilde{\tau}_i(I(k,r)) \delta
 \end{align*}
and hence
\begin{align*}
1 \geq 
 \tilde{\tau}_i( \bigcup_{k \geq 0} H(k,r)) = \sum_{k=0}^\infty 
 \tilde{\tau}_i(H(k,r)) \geq \delta  \sum_{k=0}^\infty 
 \tilde{\tau}_i(I(k,r)).
\end{align*}
It follows that $\sum_{n \in \nn} \tilde{\tau}_i(A(\varepsilon , n) ) \leq l / \delta  < \infty$.
\end{proof}

The following proposition is one of the main results of this paper.
The statement means that  almost surely the random Julia set is of measure-zero and the averaged system is stable
if the kernel Julia set is empty.

\begin{prop}\label{pathwisenull_prop}
Let $\lambda $ be a Borel finite measure on $Y$. 
Suppose that $\mathbb{J}_{{\rm {\rm ker}}}(S_\tau) = \emptyset$ and  $\Gamma_e \subset \ocm(Y)$ for all  $e \in E$.
Then,  $F_{\rm meas} (M_\tau^* ) =\mathfrak{M}_1 (\mathbb{Y})$ 
and 
 $ \lambda(J_\xi) = 0$ holds 
for any $i \in V$ and for $\tilde{\tau}_i $ -a.e. $\xi \in X_i(S_\tau)$.
\end{prop}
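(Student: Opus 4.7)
My plan is to deduce both conclusions from the single pointwise statement
\[ (\star)\quad \tilde{\tau}_i\bigl(\{\xi \in X_i(S_\tau) : y \in J_\xi\}\bigr) = 0 \quad \text{for every } (y, i) \in \mathbb{Y}. \]
Given $(\star)$, Fubini yields $\lambda(J_\xi) = 0$ for $\tilde{\tau}_i$-a.e.\ $\xi$ and every $i$, while Lemma \ref{measure0_lemma} places every $(y, i)$ into $F_{\rm pt}^0(M_\tau^*)$, so that $F_{\rm pt}^0(M_\tau^*) = \mathbb{Y}$ and Lemma \ref{FMiff_lemma} upgrades this to $F_{\rm meas}(M_\tau^*) = \mathfrak{M}_1(\mathbb{Y})$. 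Hence the whole task reduces to $(\star)$.

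To prove $(\star)$ I first use Lemma \ref{limsupdiam_lemma}(iii) to contain the event inside
\[ E := \{\xi = (\gamma_n, e_n)_n \in X_i(S_\tau) : \gamma_{n,1}(y) \in J_{t(e_n)}(S_\tau) \text{ for all } n\}, \]
so that it suffices to bound $\tilde{\tau}_i(E)$. The natural tool is Lemma \ref{lem-M} applied with $U_j := F_j(S_\tau)$: by Lemma \ref{inv_lemma}(i) the family $(F_j(S_\tau))_{j \in V}$ is forward $S_\tau$-invariant, each $F_j(S_\tau)$ is open, and---provided each $F_j(S_\tau)$ is non-empty---the set $L_{\ker, j}$ appearing in Lemma \ref{lem-M} coincides with $J_{\ker, j}(S_\tau) = \emptyset$. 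Since $d(\cdot, \emptyset) = \infty$ cannot tend to $0$, the conclusion of Lemma \ref{lem-M} forces $\tilde{\tau}_i(E) = 0$, which is $(\star)$ in this case.

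The main obstacle is the residual possibility that $F_{j_0}(S_\tau) = \emptyset$ for some $j_0 \in V$, which blocks the direct use of Lemma \ref{lem-M}. To handle it I would introduce $V' := \{j \in V : F_j(S_\tau) \neq \emptyset\}$ and observe, using forward $S_\tau$-invariance of $(F_j)_{j \in V}$ together with $\Gamma_e \neq \emptyset$, that no edge of $E$ can point from $V'$ into $V \setminus V'$; thus $V'$ is absorbing for the vertex-level Markov chain. For $i \in V'$ every $\xi \in X_i(S_\tau)$ has $t(e_n) \in V'$ for all $n$, and the preceding argument runs verbatim inside the sub-GDMS $S_\tau|_{V'}$ obtained by restricting to $V'$, whose Fatou and kernel Julia sets at vertices of $V'$ agree with those of $S_\tau$ by forward-closedness. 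For $i \notin V'$ the hypothesis $J_{\ker, i}(S_\tau) = \emptyset$ forces some $j \in V'$ to be reachable from $i$ in $(V, E)$, and a standard uniform-reachability bound on the finite state space makes the hitting time $T := \inf\{n : t(e_n) \in V'\}$ $\tilde{\tau}_i$-a.s.\ finite. The strong Markov property of $\tilde{\tau}_i$, combined with the elementary implication $y \in J_\xi \Rightarrow \gamma_{T,1}(y) \in J_{\sigma^T \xi}$ (which uses only continuity of $\gamma_{T,1}$), then reduces $(\star)$ for $(y, i)$ to $(\star)$ for $(\gamma_{T,1}(y), t(e_T)) \in Y \times V'$, which is already handled by the previous sub-case.
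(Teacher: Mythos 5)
Your proof is correct, and its main line coincides with the paper's: both reduce everything to the pointwise statement $(\star)$, derive $(\star)$ by combining Lemma \ref{limsupdiam_lemma}(iii) with Lemma \ref{lem-M} applied to $U_j=F_j(S_\tau)$ (so that $L_{\ker,j}=J_{\ker,j}(S_\tau)=\emptyset$ and $d(\cdot,\emptyset)=\infty$ annihilates the event), and then conclude via Fubini, Lemma \ref{measure0_lemma} and Lemma \ref{FMiff_lemma}. The one genuine point of divergence is the non-emptiness of the sets $U_j$ required by Lemma \ref{lem-M}: the paper disposes of this in a single sentence, asserting that $\mathbb{J}_{\ker}(S_\tau)=\emptyset$ forces $F_j(S_\tau)\neq\emptyset$ for every $j$. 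That assertion is immediate when $S_\tau$ is irreducible (if one Fatou set is empty then all are, by Lemma \ref{inv_lemma}(iii), and then every $J_{\ker,j}=Y$), but the proposition does not assume irreducibility, and for a reducible graph the implication is not obvious. Your workaround --- passing to the absorbing vertex set $V'=\{j: F_j(S_\tau)\neq\emptyset\}$, noting that $J_{\ker,i}(S_\tau)=\emptyset$ forces $V'$ to be reachable from every vertex, running the paper's argument inside the sub-GDMS over $V'$ (whose Fatou and kernel Julia sets at vertices of $V'$ agree with those of $S_\tau$ by forward-closedness), and transporting $(\star)$ back along the $\tilde{\tau}_i$-a.s.\ finite hitting time of $V'$ using $J_\xi\subset\gamma_{n,1}^{-1}(J_{\sigma^n\xi})$ from Lemma \ref{skewprod0_lemma} --- is sound. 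It costs some extra bookkeeping (the strong Markov/Borel--Cantelli step for the hitting time, and the identification of the restricted system's invariants), but it buys a proof that does not lean on the paper's unproved non-emptiness claim and therefore genuinely covers the reducible case.
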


 \begin{proof}
Note that  the Fatou set  $F_j (S_\tau)$ at   each $j \in V$ is not empty since $\mathbb{J}_{{\rm {\rm ker}}}(S_\tau) = \emptyset$.
By Lemma \ref{inv_lemma} the family  $(F_j(S_\tau))_{j \in V}$ of Fatou sets  is forward $S_\tau$-invariant. 
Hence we can apply Lemma \ref{lem-M} with  $ U_j:=F_j (S_\tau)$. 
Therefore, we have $$\tilde{\tau}_i \{ (\gamma_n , e_n )_{n \in \nn} \in  X_i(S_\tau); y \in \bigcap_{n\in \nn} \gamma_{n,1}^{-1} \left(Y \setminus F_{t(e_n)}(S_\tau) \right) \} =0$$ for all $(y, i) \in \mathbb{Y}$.
By Lemma \ref{limsupdiam_lemma}, it follows that $ \tilde{\tau}_i (\{ \xi \in  X_i(S_\tau); y \in J_\xi \} ) =0$.
By virtue of Fubini's theorem, we have $\lambda(J_\xi) = 0$ for $\tilde{\tau}_i $ -a.e. $\xi \in X_i(S_\tau)$.
Furthermore, by Lemma \ref{measure0_lemma},  we know  $ (y, i ) \in F_{\rm pt}^0 (M_\tau^*)$ for any $(y, i) \in \mathbb{Y}$.
Lemma \ref{FMiff_lemma} implies $F_{{\rm meas}} (M_\tau^*) = \mathfrak{M}_{1}({\mathbb{Y}}) $.
 \end{proof}
\section{Rational MRDSs on $\rs$}\label{ratMar_sec}
In this section, we focus on  holomorphic dynamical systems on the Riemann sphere $\rs$.
We denote by $\rat$ the space of all non-constant holomorphic maps from $\rs$ to itself with the topology of  uniform convergence or the compact-open topology.
Recall that each element $f$ of $\rat$ can be expressed as the quotient $p(z) / q(z)$ of two polynomials without common zeros and the degree of $f$  is defined by the maximum of the degrees of $p$ and $q$. 
We denote by $\poly$ the subspace of $\rat$ consisting of all polynomial  maps of degree two or more.
We consider {\it rational} GDMSs or {\it polynomial} GDMSs as in  Definition \ref{def-RMRDS}.

In subsection \ref{julia_sec},  we discuss the Julia sets of rational GDMSs and show some fundamental properties.
These discussions are the generalization of those of rational semigroups (see \cite{stank12}).
Moreover, we show some sufficient conditions for the kernel Julia sets to be empty.
In subsection \ref{probfunc_section}, we focus on a polynomial MRDS  
and the function $\mathbb{T}_{\infty,\tau}$ which represents the probability of tending to $\infty$.
We show that the function $\mathbb{T}_{\infty,\tau}$ is continuous on the whole space and varies precisely on the Julia set of associated system under certain conditions.
 
\subsection{Julia sets}\label{julia_sec}
\begin{definition}\label{def-RMRDS}
We say that  $S = ( V, E, (\Gamma_e)_{e \in E} )$  is a rational (resp. polynomial) GDMS on $\rs$ if $\Gamma_e \subset \rat$  (resp. $\Gamma_e \subset \poly$ ) for all $e \in E$.
\end{definition}

Let $S = ( V, E, (\Gamma_e)_{e \in E} )$ be a rational GDMS.
Recall that the Julia set $J_i(S)$ of $S$ at the vertex $i \in V$ is equal to the Julia set of the rational semigroup $H_i^i(S)$ if $S$ is irreducible  (see Remark \ref{sysJulia=semiJulia_rem}).
It is well known that the Julia set $J(G)$ of a rational semigroup $G$ is equal to the closure of the set of repelling fixed points of elements of $G$ if $J(G)$ contains at least three points.
For this reason, we introduce the following important definition.

\begin{definition}
A rational GDMS $S= (V, E,(\Gamma_e)_{e \in E})$  is said to be non-elementary if the Julia set  $J_i(S)$ at $i$ contains  at least three points for all $i \in V$.
\end{definition}

Consequently, we obtain a characterization of the Julia set of a rational GDMS.

\begin{cor}\label{cor-density}
If a rational GDMS $S= (V, E,(\Gamma_e)_{e \in E})$  is non-elementary and irreducible, then 
$$J_i(S) =\overline{ \bigcup_{ h \in H_i^i(S)} \{ \text{repelling fixed points of }h \} }= \overline{\bigcup_{\xi \in X_i(S)} J_{\xi}}$$
 for all $i \in V$.
Here, a fixed point $z_0$ of $h$ is said to be repelling if the modulus of multiplier of $h$ at $z_0$  is greater than $1$.
\end{cor}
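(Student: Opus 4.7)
The plan is to prove the two equalities separately, with the first serving as a bridge to the second.

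\textbf{Step 1: The first equality.} Since $S$ is irreducible, Lemma \ref{sysJulia=semiJulia_lemma} together with Remark \ref{sysJulia=semiJulia_rem} gives $J_i(S) = J(H_i^i(S))$, where $H_i^i(S)$ is a rational semigroup on $\hat{\mathbb{C}}$. The non-elementary assumption means $J_i(S) = J(H_i^i(S))$ contains at least three points. I would then invoke the classical theorem for rational semigroups (see \cite{hm}, \cite{stank12}) which states that, under this cardinality hypothesis, $J(G)$ equals the closure of the repelling fixed points of elements of $G$. Applied to $G = H_i^i(S)$ this gives the first equality.

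\textbf{Step 2: The inclusion $\overline{\bigcup_{\xi \in X_i(S)} J_\xi} \subset J_i(S)$.} By Lemma \ref{limsupdiam_lemma}(i), each $J_\xi$ with $\xi \in X_i(S)$ is contained in $J_i(S)$. Since $J_i(S)$ is closed (it is the complement of the open Fatou set), taking the union and then the closure preserves the inclusion.

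\textbf{Step 3: The reverse inclusion, using Step 1.} By the first equality, it suffices to show that every repelling fixed point $z_0$ of every $h \in H_i^i(S)$ lies in $J_\xi$ for some $\xi \in X_i(S)$. Writing $h = \gamma_N \circ \cdots \circ \gamma_1$ with a finite admissible sequence $(\gamma_n, e_n)_{n=1}^N$ of initial and terminal vertex $i$, I form the periodic infinite admissible sequence $\xi := ((\gamma_1, e_1), \dots, (\gamma_N, e_N), (\gamma_1, e_1), \dots) \in X_i(S)$. Then $\xi_{kN,1} = h^k$, and since $z_0$ is a repelling fixed point of $h$ we have $(h^k)'(z_0) = h'(z_0)^k$ with $|h'(z_0)| > 1$, so $\{\xi_{n,1}\}_{n \in \nn}$ cannot be equicontinuous at $z_0$ (the usual argument: any equicontinuous subfamily would have uniformly bounded derivatives on a neighborhood by Cauchy's estimate, contradicting unbounded growth of $|(h^k)'(z_0)|$). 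Hence $z_0 \in J_\xi$.

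\textbf{Main obstacle.} The only genuinely subtle point is ensuring that Step 1 applies: namely, the identification $J_i(S) = J(H_i^i(S))$ requires irreducibility and the diameter-shrinking condition on preimages, both of which are granted here (Remark \ref{sysJulia=semiJulia_rem}). The remaining ingredients—the classical characterization of rational semigroup Julia sets via repelling fixed points, and the standard argument that repelling periodic points lie in the non-autonomous Julia set of any periodic extension—are routine once the semigroup structure is in place.
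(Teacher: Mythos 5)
Your proof is correct and follows essentially the same route as the paper: the identification $J_i(S)=J(H_i^i(S))$ via Lemma \ref{sysJulia=semiJulia_lemma} and Remark \ref{sysJulia=semiJulia_rem}, followed by the classical density theorem for repelling fixed points of rational semigroups. The paper leaves the second equality implicit, and your Steps 2--3 (the containment $J_\xi\subset J_i(S)$ from Lemma \ref{limsupdiam_lemma}(i) plus the periodic-sequence argument placing each repelling fixed point in some $J_\xi$) supply exactly the intended standard argument.
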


Here are some basic properties of the Julia set.
Although some claims can be directly proved by using the theory of rational semigroups, we do not use it in order not to require preliminary knowledge. 

\begin{lemma}\label{lem-backwrdMinimal}
Let $(L_i)_{i \in  V}$ be a family  that is backward $S$-invariant and suppose that each  $L_i$ is a compact set which contains at least three points (\cite{stank12}).
Then $(J_i(S))_{i \in  V} \subset  (L_i)_{i \in  V}$.
\end{lemma}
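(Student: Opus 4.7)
The plan is to prove the contrapositive pointwise: for each fixed $i \in V$ I will show $\rs \setminus L_i \subset F_i(S)$, i.e.\ every point outside $L_i$ is a Fatou point of $S$ at $i$. The argument is a Montel-normality argument, adapted from the semigroup setting, with the small twist that elements of $H_i(S)$ land in different terminal vertices, so the three omitted values depend on the terminal vertex.

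First I would fix $z_0 \in \rs \setminus L_i$ and, using compactness of $L_i$, choose an open neighborhood $U$ of $z_0$ with $U \cap L_i = \emptyset$. The key observation is that by Lemma \ref{inv_lemma}(ii), backward $S$-invariance of $(L_j)_{j \in V}$ propagates along admissible words: for every $j \in V$ and every $h \in H_i^j(S)$ one has $h^{-1}(L_j) \subset L_i$, equivalently
\[
h(\rs \setminus L_i) \subset \rs \setminus L_j .
\]
In particular $h(U) \subset \rs \setminus L_j$ for all $h \in H_i^j(S)$.

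Next, for each $j \in V$ I would pick three distinct points $a_j, b_j, c_j \in L_j$ (which exist since $L_j$ contains at least three points). Then every $h \in H_i^j(S)$, viewed as a meromorphic map $U \to \rs$, omits the three values $a_j, b_j, c_j$. By Montel's theorem the family $H_i^j(S)|_U$ is normal on $U$, which in the language of meromorphic maps on $\rs$ is equivalent to equicontinuity on $U$ with respect to the spherical metric $d$. Since $V$ is finite, $H_i(S)|_U = \bigcup_{j \in V} H_i^j(S)|_U$ is a finite union of equicontinuous families, hence itself equicontinuous on $U$. Therefore $z_0 \in F_i(S)$, as required.

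I do not expect a genuine obstacle: the only mild subtlety is that one cannot apply Montel to $H_i(S)$ as a single family (different maps omit different triples), which is why the splitting $H_i(S) = \bigcup_{j \in V} H_i^j(S)$ and the finiteness of $V$ are essential. Also, one must invoke the spherical-metric version of Montel/Arzelà--Ascoli so that ``normal'' and ``equicontinuous'' coincide in the definition of $F_i(S)$ used in Definition \ref{julia_def}.
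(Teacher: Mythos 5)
Your proof is correct and follows essentially the same route as the paper's: the paper also observes that backward $S$-invariance gives $H_i^j(S)(\rs \setminus L_i) \subset \rs \setminus L_j$ and then applies Montel's theorem to conclude $\rs \setminus L_i \subset F_i(S)$. Your write-up merely makes explicit the details the paper leaves implicit (the vertex-wise splitting of $H_i(S)$, the finiteness of $V$, and the spherical-metric equivalence of normality and equicontinuity).
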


\begin{proof}
Since $(L_i)_{i \in  V}$  is backward $S$-invariant, it follows that $H_i^j(S)(\rs \setminus L_i) \subset \rs \setminus L_j$ for all  $ i, j \in V$.
If $\# L_j \geq 3$ for each $j \in V$, then  $\rs \setminus L_i \subset F_i(S)$  for each $i \in V$ by Montel's theorem.
\end{proof}

\begin{definition}\label{def-exceptional}
A point $z$ is called an {\it exceptional point}  of $S$ at the vertex $i \in V$ if $\# (H_i^i(S))^{-1}(z) < 3 $, where $(H_i^i(S))^{-1}(z) = \bigcup_{h \in H_i^i(S)} h^{-1}(z)$.
We define  $\mathcal{E}_i (S)$ as the set of all exceptional points $z$ of $S$ at $i \in V$.
\end{definition}

\begin{lemma}\label{lem-backwrdOrbit}
Let $S= (V, E,(\Gamma_e)_{e \in E})$  be an irreducible rational GDMS.
Let $j \in V$.
Then we have the following statements.

\begin{enumerate}
\item If $z \notin \mathcal{E}_j(S)$, then  $J_i(S) \subset \overline{(H_i^j(S))^{-1}(z)}$ for all  $i \in V$. 
\item If $z \in J_j(S) \setminus  \mathcal{E}_j(S)$, then  $J_i(S) = \overline{(H_i^j(S))^{-1}(z)}$ for all $i \in V$. 
\end{enumerate}
\end{lemma}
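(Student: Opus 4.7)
Part (ii) follows rapidly from (i). Since $\Gamma_e \subset \rat \subset \ocm(\rs)$, Lemma \ref{inv_lemma}(i) tells us $(J_i(S))_{i \in V}$ is backward $S$-invariant, and part (ii) of that lemma then gives $(H_i^j(S))^{-1}(J_j(S)) \subset J_i(S)$. For $z \in J_j(S) \setminus \mathcal{E}_j(S)$ this yields $(H_i^j(S))^{-1}(z) \subset J_i(S)$, and taking closures combines with (i) to produce the equality.

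For (i), fix $z \notin \mathcal{E}_j(S)$, $i \in V$, $y \in J_i(S)$, and an open neighborhood $U$ of $y$. I aim to prove $z \in H_i^j(S)(U) := \bigcup_{h \in H_i^j(S)} h(U)$. The crucial first step is to show that $H_i^j(S)$ is not equicontinuous at $y$; this is the main obstacle, since $H_i^j(S)$ is not itself closed under composition when $j \neq i$, so one cannot directly invoke the semigroup theory. I will circumvent this with irreducibility: by Lemma \ref{sysJulia=semiJulia_lemma} together with Remark \ref{sysJulia=semiJulia_rem}, $y \in J_i(S) = J(H_i^i(S))$, so $H_i^i(S)$ is not equicontinuous at $y$. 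Irreducibility supplies some $g \in H_i^j(S)$, and admissibility matches $H_i^i(S)$-words (terminating at $i$) with $g$ (initiating at $i$), giving $g \circ H_i^i(S) \subset H_i^j(S)$. Applying Lemma \ref{equiconti_lemma} to the rational map $g$ (whose diameter condition is guaranteed by Remark \ref{sysJulia=semiJulia_rem}) transfers non-equicontinuity: $g \circ H_i^i(S)$, and hence the larger family $H_i^j(S)$, fails to be equicontinuous at $y$. Montel's theorem then forces the closed set $A := \rs \setminus H_i^j(S)(U)$ to have at most two points.

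The remaining step is to show $A \subset \mathcal{E}_j(S)$. For any $\zeta \in A$ and any $h_0 \in H_j^j(S)$, each composition $h_0 \circ h$ with $h \in H_i^j(S)$ is admissible (since $h$ terminates at $j$ and $h_0$ both initiates and terminates at $j$) and so lies in $H_i^j(S)$. The defining property of $A$ then gives $\zeta \notin (h_0 \circ h)(U) = h_0(h(U))$, i.e.\ $h_0^{-1}(\zeta) \cap h(U) = \emptyset$; taking the union over $h \in H_i^j(S)$ yields $h_0^{-1}(\zeta) \subset A$, and a further union over $h_0 \in H_j^j(S)$ gives $(H_j^j(S))^{-1}(\zeta) \subset A$. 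Since $\#A \leq 2 < 3$, this forces $\zeta \in \mathcal{E}_j(S)$. As $z \notin \mathcal{E}_j(S)$, we conclude $z \notin A$, so $z \in H_i^j(S)(U)$, which completes the proof of (i).
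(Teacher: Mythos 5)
Your proof is correct, but part (i) follows a genuinely different route from the paper's. The paper simply sets $L_i := \overline{(H_i^j(S))^{-1}(z)}$, observes that $(L_i)_{i\in V}$ is a backward $S$-invariant family of compact sets each containing at least three points (the latter using $z\notin\mathcal{E}_j(S)$ together with irreducibility and the surjectivity of rational maps), and invokes Lemma \ref{lem-backwrdMinimal}; part (ii) is then obtained exactly as you do, from Lemma \ref{inv_lemma}. You instead prove the blow-up property directly at each $y\in J_i(S)$: you transfer non-normality from the semigroup $H_i^i(S)$ to the family $H_i^j(S)$ via Lemma \ref{sysJulia=semiJulia_lemma} and Lemma \ref{equiconti_lemma}, apply Montel's theorem to see that $H_i^j(S)(U)$ omits at most two points, and show that the omitted set is backward invariant under $H_j^j(S)$, hence contained in $\mathcal{E}_j(S)$. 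Both arguments ultimately rest on Montel; the paper's is shorter because the Montel step is already packaged inside Lemma \ref{lem-backwrdMinimal}, while yours yields the stronger local expansion statement ($H_i^j(S)(U)\supset\rs\setminus\mathcal{E}_j(S)$ for every neighborhood $U$ of a Julia point) as a byproduct, at the cost of needing Lemma \ref{equiconti_lemma} to handle the fact that $H_i^j(S)$ is not itself a semigroup. One cosmetic point: $y\in J(H_i^i(S))$ literally means that $H_i^i(S)$ is equicontinuous on no neighborhood of $y$, not necessarily that it fails to be equicontinuous at $y$ itself; you should apply Lemma \ref{equiconti_lemma} at a point $y'\in U$ where equicontinuity fails, which changes nothing downstream since all you need is non-normality of $H_i^j(S)$ on $U$.
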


\begin{proof}
Set $L_i := \overline{(H_i^j(S))^{-1}(z)}$.
Then $(L_i)_{i \in V}$ is  backward $S$-invariant and each $L_i$ contains at least three points since $S$ is irreducible. 
Lemma \ref{lem-backwrdMinimal} implies (i).
Combining (i) and Lemma \ref{inv_lemma} implies (ii).
\end{proof}

\begin{rem}
Lemma \ref{lem-backwrdOrbit} suggests an algorithm for computing pictures of the Julia set.
Figure \ref{ex-Julia} is drawn in this manner.
\end{rem}

\begin{lemma}\label{perfect_lemma}
If a rational GDMS $S= ( V, E, (\Gamma_e)_{e \in E} )$ is non-elementary, then each Julia set $J_i(S)$ is a perfect set.
\end{lemma}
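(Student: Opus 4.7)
The plan is to argue by contradiction: assume that some $z_0\in J_i(S)$ is isolated in $J_i(S)$, fix an open neighborhood $U$ of $z_0$ with $U\cap J_i(S)=\{z_0\}$, and set $V_0:=U\setminus\{z_0\}$. Then $V_0\subset F_i(S)$, so the family $H_i(S)=\bigcup_{j\in V}H_i^j(S)$ is equicontinuous (equivalently, spherically normal) on $V_0$, yet it fails to be equicontinuous on any neighborhood of $z_0$. Because $V$ is finite, at least one $H_i^{j_0}(S)$ must fail to be equicontinuous on every neighborhood of $z_0$; fix such a $j_0\in V$, and observe that $\#J_{j_0}(S)\ge 3$ by the non-elementary hypothesis.

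Next I would extract from $H_i^{j_0}(S)$ a sequence $(h_n)$ witnessing non-normality at $z_0$, and use normality on $V_0$ to pass to a subsequence with $h_n\to h$ locally uniformly on $V_0$, where $h\colon V_0\to\rs$ is meromorphic. The key dichotomy is the behavior of $h$ at $z_0$: if $h$ extended meromorphically across $z_0$, then a standard spherical-uniform-convergence argument (using the extension $\tilde h$ on a small disk $D\ni z_0$ and propagating uniform convergence on $\partial D$ inward via a Rouch\'e/Hurwitz argument for meromorphic maps) would recover equicontinuity of $\{h_n\}$ on a neighborhood of $z_0$, contradicting the choice of $(h_n)$. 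Hence $z_0$ is an essential singularity of $h$, and by the big Picard theorem $h$ attains every value of $\rs$ except at most two exceptional values $a_1,a_2$ in every punctured neighborhood of $z_0$.

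Finally, using $\#J_{j_0}(S)\ge 3$, I would pick $w\in J_{j_0}(S)\setminus\{a_1,a_2\}$. There exists $z\in V_0$ arbitrarily close to $z_0$ with $h(z)=w$, and Hurwitz's theorem applied in a small disk around such a $z$ produces, for all sufficiently large $n$, a point $z_n\in V_0$ with $h_n(z_n)=w$ and $z_n\to z_0$. Since $h_n\in H_i^{j_0}(S)$ and $w\in J_{j_0}(S)$, the backward $S$-invariance of $(J_k(S))_{k\in V}$ (Lemma~\ref{inv_lemma}(ii)) yields $z_n\in h_n^{-1}(J_{j_0}(S))\subset J_i(S)$, contradicting $V_0\cap J_i(S)=\emptyset$.

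The main technical obstacle is the essential-singularity dichotomy in the middle paragraph: one must verify rigorously that a meromorphic extension of the limit $h$ across the isolated puncture $z_0$ really forces spherical-uniform convergence $h_n\to\tilde h$ on a whole neighborhood of $z_0$, so that equicontinuity of the family genuinely propagates from $V_0$ to $z_0$. The non-elementary hypothesis enters only in the remaining (essential-singularity) branch, where it guarantees that the required value $w\in J_{j_0}(S)\setminus\{a_1,a_2\}$ exists.
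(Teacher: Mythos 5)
Your overall strategy (isolated point, punctured neighbourhood contained in the Fatou set, normality on the punctured neighbourhood, dichotomy at the puncture) is workable, and your essential-singularity branch is correct: there the big Picard theorem together with backward $S$-invariance of the Julia sets (Lemma~\ref{inv_lemma}) does produce points of $J_i(S)$ inside $V_0$, a contradiction. The gap is in the other branch. If $h_n\to h$ locally uniformly (spherically) on $V_0=U\setminus\{z_0\}$ and $h$ extends meromorphically across $z_0$, it does \emph{not} follow that $h_n\to\tilde h$ uniformly near $z_0$, and no Rouch\'e/Hurwitz argument on $\partial D$ can force this propagation for meromorphic maps: take $h_n(z)=1/(nz)$ and $z_0=0$. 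These are non-constant rational maps, $h_n\to 0$ locally uniformly on every punctured disc about $0$ (hence uniformly on $\partial D$), the limit extends to the constant $0$, yet $h_n(0)=\infty$ and $h_n(1/n)=1$ for every $n$, so the family is not equicontinuous at $0$. (The argument principle does not detect this because a pole and a nearby $c$-point both collapse to $z_0$ and cancel.) Thus the purely function-theoretic step you describe is false as stated, and this branch is precisely where the non-elementary hypothesis must be used --- contrary to your closing remark that it enters only in the essential-singularity branch.

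What rescues the argument, and what the paper actually uses, is forward invariance: by Lemma~\ref{inv_lemma} one has $H_i^{j}(S)(U^*)\subset F_{j}(S)$, so every member of $H_i^{j}(S)$ omits on $U^*$ the at least three points of $J_{j}(S)$ --- three \emph{fixed} values, common to the whole family. The assertion that a family of meromorphic functions on a punctured disc omitting three fixed values is normal on the \emph{full} disc is the strengthened Montel--Carath\'eodory theorem cited in the paper (\cite{cara}); its proof needs Schottky-type estimates or the modular function, not Rouch\'e. The paper applies this theorem to each $H_i^{j}(S)$ on $U$ and concludes $z_0\in F_i(S)$ in one step, with no case analysis and no extraction of limits. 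To repair your proof you must either invoke that theorem in your first branch (which makes the dichotomy superfluous), or use the three omitted values directly to rule out the pole/$c$-point cancellation exhibited by the counterexample; the convergence on $V_0$ alone is not enough.
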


\begin{proof}
Suppose $J_i(S)$  has an isolated point  $z_0 $.
Then there exists an open neighborhood $U$ of $z_0$ in $\rs$ such that $U \cap J_i(S) = \{ z_0 \}$.
Set $U^* := U \setminus \{ z_0 \} $.
We, therefore, have that  $ U^* \subset F_i(S)$ and 
 $H_i^j(S) (U^*) \subset F_j(S)$ for all  $j \in V$.
By assumption, $\rs \setminus F_j(S)$ contains at least three points for all $j \in V$.
Thus, by  the strengthened Montel's theorem \cite[p203]{cara} 
it follows that  $H_i^j(S)$ is normal on the whole $U$.
This contradicts that $z_0 \in J_i(S)$.
\end{proof}

\begin{lemma}\label{exceptional_lemma}
If a rational GDMS $S= ( V, E, (\Gamma_e)_{e \in E} )$ is non-elementary and irreducible, then $\# \mathcal{E}_i (S) <3$ for all $i \in V$.  
\end{lemma}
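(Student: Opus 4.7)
The plan is to argue by contradiction: suppose $\#\mathcal{E}_i(S) \ge 3$ for some $i \in V$, and pick three distinct points $z_1, z_2, z_3 \in \mathcal{E}_i(S)$. I would first record that $\mathcal{E}_i(S)$ is backward $H_i^i(S)$-invariant: if $h \in H_i^i(S)$ and $h(w) \in \mathcal{E}_i(S)$, then for every $g \in H_i^i(S)$ the composition $h\circ g$ also lies in $H_i^i(S)$ (the admissible loops at $i$ are closed under concatenation), so $g^{-1}(w) \subset (h\circ g)^{-1}(h(w)) \subset (H_i^i(S))^{-1}(h(w))$, a set of fewer than three points; hence $w\in\mathcal{E}_i(S)$.

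The core of the argument is a degree bound from Riemann--Hurwitz. For each individual $h \in H_i^i(S)$, the inequality $|h^{-1}(z_k)| \le \#(H_i^i(S))^{-1}(z_k) \le 2$ holds for $k=1,2,3$, so $\sum_{k=1}^3|h^{-1}(z_k)| \le 6$. Combined with the standard consequence of $\sum_w(e_w-1)=2\deg h - 2$, namely $\sum_{k=1}^3|h^{-1}(z_k)| \ge \deg h + 2$, this gives $\deg h \le 4$ for every $h\in H_i^i(S)$. Since $H_i^i(S)$ is closed under composition and $\deg(h^n)=(\deg h)^n$, this forces $\deg h = 1$ for every element of $H_i^i(S)$.

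So every element of $H_i^i(S)$ is a M\"obius transformation, and such a map is determined by the images of any three distinct points under its inverse. The assignment $h \mapsto (h^{-1}(z_1), h^{-1}(z_2), h^{-1}(z_3))$ is therefore injective, and each coordinate ranges over a set of cardinality at most two, so $\#H_i^i(S) \le 2^3 = 8$. A finite family of continuous self-maps of the compact space $\rs$ is automatically equicontinuous on the whole sphere, so $J(H_i^i(S)) = \emptyset$. By irreducibility together with Lemma \ref{sysJulia=semiJulia_lemma} (whose diameter hypothesis holds in the rational setting by Remark \ref{sysJulia=semiJulia_rem}), this equals $J_i(S)$, contradicting the non-elementarity assumption $\#J_i(S) \ge 3$.

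The step I expect to require the most care is the reduction to the M\"obius case: Riemann--Hurwitz only bounds the degree of each individual map, and it is the interplay with the semigroup property (all iterates $h^n$ belong to $H_i^i(S)$) that rules out any map of degree $\ge 2$. Once one is in the M\"obius case, finiteness and the resulting empty Julia set follow quickly via the injective parametrisation, but without that earlier degree bound one would have to contend with the possibility of an elaborate infinite M\"obius semigroup with three exceptional points, which is exactly what the combined argument forbids.
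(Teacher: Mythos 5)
Your proof is correct, but it takes a genuinely different route from the paper's. The paper argues as follows: given three distinct exceptional points $a,b,c$ at a vertex $k$, it sets $L_i := \overline{(H_i^k(S))^{-1}(\{a,b,c\})}$, observes that $(L_i)_{i\in V}$ is backward $S$-invariant with each $L_i$ containing at least three points, and invokes the Montel-type minimality statement (Lemma \ref{lem-backwrdMinimal}) to conclude $J_k(S)\subset L_k$; since $L_k$ has at most six points (each of $a,b,c$ being exceptional), this contradicts the perfectness of $J_k(S)$ established in Lemma \ref{perfect_lemma}. Your argument instead extracts structural information about the loop semigroup $H_i^i(S)$: the Riemann--Hurwitz count $\sum_{k=1}^{3}\# h^{-1}(z_k)\ge \deg h+2$, played against the bound $\le 6$ coming from exceptionality and the closure of $H_i^i(S)$ under iteration, forces every loop map to be M\"obius; the injective parametrisation by the triple $(h^{-1}(z_1),h^{-1}(z_2),h^{-1}(z_3))$ then gives $\# H_i^i(S)\le 8$, so $J(H_i^i(S))=\emptyset$, and $J_i(S)=\emptyset$ by Lemma \ref{sysJulia=semiJulia_lemma} together with Remark \ref{sysJulia=semiJulia_rem}, contradicting non-elementarity. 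Every step checks out (in particular $h^n\in H_i^i(S)$ because admissible loops at $i$ concatenate, which is what powers the reduction from $\deg h\le 4$ to $\deg h=1$). What your route buys is a stronger structural conclusion --- three exceptional points at a vertex would force the loop semigroup there to consist of at most eight M\"obius maps --- at the cost of invoking Riemann--Hurwitz and the identification $J_i(S)=J(H_i^i(S))$; the paper's route stays entirely within the Montel/minimality machinery it has already set up and never needs to control degrees. One small remark: the backward invariance of $\mathcal{E}_i(S)$ that you record in your first paragraph is never actually used in the remainder of your argument and could be dropped.
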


\begin{proof}
The proof is by contradiction. 
Suppose that there exists $k \in V$ such that $\mathcal{E}_k (S)$ has three distinct points $a$, $b$ and $c$.
For each $i \in V$, we set $L_i:= \overline{(H_i^k(S))^{-1}(\{a,b,c \} )}$.
Then $(L_i)_{i \in V}$ is backward $S$-invariant and each $L_i$ contains at least three points.
By Lemma \ref{lem-backwrdMinimal}, we have $J_k(S) \subset L_k$.
However, this contradicts Lemma \ref{perfect_lemma} since $L_k$ is finite.
\end{proof}

\begin{prop}\label{prop-intEmpty}
Let  $S = ( V, E, (\Gamma_e)_{e \in E} )$ be an irreducible rational GDMS such that  $\Gamma_e$ is a finite set  for each $e\in E$.
If $S$ satisfies the backward separating condition,
then either   ${\rm int} (J_{i}(S)) = \emptyset $ for all $i \in V$, or  $J_{i}(S) = \rs $ for all $i \in V$.
\end{prop}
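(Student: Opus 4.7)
My plan is to assume ${\rm int}(J_{i_0}(S)) \neq \emptyset$ for some $i_0 \in V$ and deduce $J_j(S) = \widehat{\mathbb{C}}$ for every $j \in V$. First I would propagate nonempty interior through the graph. By Proposition \ref{selfsim_prop} together with the backward separating condition,
\[
J_i(S) = \bigsqcup_{e \in E : i(e)=i}\ \bigsqcup_{f \in \Gamma_e} f^{-1}(J_{t(e)}(S))
\]
is a disjoint union of finitely many closed sets (since $E$ and each $\Gamma_e$ are finite). If $U$ is a small connected open ball inside ${\rm int}(J_{i_0}(S))$, then connectedness prevents $U$ from being split across more than one nonempty piece, so $U \subset f^{-1}(J_{t(e)}(S))$ for a single admissible pair $(e, f)$; since rational maps are open, $f(U)$ is an open subset of $J_{t(e)}(S)$, giving ${\rm int}(J_{t(e)}(S)) \neq \emptyset$. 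Iterating along admissible paths guaranteed by irreducibility yields ${\rm int}(J_j(S)) \neq \emptyset$ for every $j \in V$.

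Next, since each $J_j(S)$ has nonempty interior it is infinite, so $S$ is non-elementary, and Corollary \ref{cor-density} supplies a repelling fixed point $z_0 \in {\rm int}(J_j(S))$ of some $h \in H_j^j(S)$ for each $j$. I would then run the classical escape argument for the single rational map $h$: since $z_0 \in J(h)$, Montel applied to $\{h^n\}$ gives $\bigcup_{n \geq 1} h^n(W) \supset \widehat{\mathbb{C}} \setminus E_h$ with $\# E_h \leq 2$, for any neighborhood $W$ of $z_0$. Combined with the closedness of $J_j(S)$, it then suffices to establish $h^n(W) \subset J_j(S)$ for every $n$ to conclude $J_j(S) = \widehat{\mathbb{C}}$.

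The main obstacle is exactly this containment, since Julia sets of rational semigroups are not forward-invariant under their elements in general. This is where the backward separating condition does the essential work: the disjoint self-similar decomposition equips each point of ${\rm int}(J_j(S))$ with a canonical tile-itinerary, and, combining the disjointness with forward invariance of the Fatou family, one checks that the itinerary of $z_0$ coincides with the cyclic composition defining $h$. For $W$ small enough around $z_0$, every $z \in W$ shares $z_0$'s itinerary for enough steps that $h$ carries $W$ back into the correct tile inside ${\rm int}(J_j(S))$; an inductive argument using the local linearization of $h$ at $z_0$ then extends this to $h^n(W) \subset J_j(S)$ for every $n$, completing the proof.
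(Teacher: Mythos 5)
Your proof is correct in its essential strategy but takes a genuinely different route from the paper's. The paper also begins with the forced tile-itinerary $f_n\circ\cdots\circ f_1(U)\subset J_{t(e_n)}(S)$ of a connected open $U\subset {\rm int}(J_i(S))$, but then argues by a normal-family dichotomy on the semigroup $H_i^i(S)$: since $U\subset J_i(S)=J(H_i^i(S))$ there is a sequence in $H_i^i(S)$ with no locally uniformly convergent subsequence on $U$; every element of $H_i^i(S)$ deviating from the forced itinerary at some step maps $U$ into $F_i(S)$ (backward separating condition plus forward invariance of the Fatou family), hence the deviating maps omit the infinitely many points of $J_i(S)$ and form a normal family; so infinitely many terms of the non-normal sequence must be itinerary maps, which send $U$ into $J_i(S)$, and Montel then forces $J_i(S)=\rs$. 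Your replacement of this dichotomy by Corollary \ref{cor-density} plus the classical blow-up property of a single $h\in H_j^j(S)$ at a repelling fixed point $z_0\in{\rm int}(J_j(S))$ is valid: the identification of $z_0$'s forced itinerary with the cyclic word of $h$ works exactly as you indicate (if some intermediate image of $z_0$ fell into a Fatou set, forward invariance would put $z_0=h(z_0)$ in $F_j(S)$), and the containment $h^n(W)\subset J_j(S)$ follows by induction because each $h^n(W)$ is an open connected subset of $J_j(S)$ containing the fixed point $z_0$, hence lies in the unique tile containing $z_0$ and can be pushed through the $h$-word. The cost of your route is an appeal to the density of repelling fixed points (and the rational-semigroup machinery behind it); the paper's argument is self-contained modulo Montel.

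Two repairs are needed, neither fatal. First, your propagation of nonempty interior to \emph{every} vertex is misjustified: the itinerary of $U$ is uniquely forced by the disjoint tiles, so you may only push $U$ along that one path, which need not visit all of $V$; irreducibility does not let you choose the path. This is harmless because you never need it: non-elementarity at every vertex already follows from backward invariance and irreducibility ($h^{-1}(J_{i_0}(S))\subset J_j(S)$ for $h\in H_j^{i_0}(S)$, and $J_{i_0}(S)$ is infinite), so you can run your blow-up argument at the single vertex $i_0$ to obtain $J_{i_0}(S)=\rs$, and then $J_j(S)\supset h^{-1}(J_{i_0}(S))=\rs$ for every $j$. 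Second, drop the local linearization at $z_0$: the sets $h^n(W)$ expand rather than stay small, and the induction uses only connectedness, openness, and the fact that $z_0\in h^n(W)$ for all $n$; no smallness of $W$ beyond $W\subset{\rm int}(J_{i_0}(S))$ is ever used.
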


\begin{proof}
We assume that ${\rm int} (J_{i}(S)) \neq \emptyset$  for some $i \in V$  and prove that $J_i(S) = \rs$.
Let  $U$ be a connected open subset of ${\rm int} (J_{i}(S))$. 
By the backward separating condition and Proposition \ref{selfsim_prop},
there uniquely exist  $e_1 \in E$ and   $f_1 \in \Gamma_{e_1}$ such that $i=i(e_1)$ and $U \subset f_1^{-1}(J_{t(e_1)}(S))$.
Furthermore, for  $e \in E$ with $i(e)=i$ and  $f \in \Gamma_e$, if $e \neq e_1$ or $f \neq f_1$, then $U \cap f^{-1}(J_{t(e)} (S)) = \emptyset$.
Inductively, there uniquely  exist $e_n \in E$ and $f_n \in \Gamma_{e_n}$ such that  $t(e_n)=i(e_{n +1})$ and $f_n \circ \cdots \circ f_1(U) \subset J_{t(e_n)} (S)$ for any $n \in \nn$.

By Lemma \ref{sysJulia=semiJulia_lemma}, we have  $U \subset J_i(S) =J(H_i^i(S))$ and hence there exists a sequence $\{ h_n \}_{n\in \nn}  \subset H_i^i(S) $ that contains no subsequence which converges locally uniformly on $U$.
It follows from Montel's theorem that there exists a subsequence  $\{ h_{n(k)} \}$ such that $ h_{n(k)} \in \{ f_n \circ \cdots \circ f_1 \}_n$  for all $k \in \nn$.
Thus we have $h_{n(k)}(U) \subset J_i(S)$ for all $k \in \nn$, and hence $J_i(S) = \rs$ by Montel's theorem again.
\end{proof}

Now we investigate the kernel Julia sets of rational GDMSs.

\begin{lemma}\label{intJker_lemma}
Let $S= (V, E,(\Gamma_e)_{e \in E})$  be an irreducible rational GDMS.
If ${\rm int}( J_{{\rm ker} ,j}(S) ) \neq \emptyset$  for some  $j \in V$, then $J_{{\rm ker} ,i}(S) = \rs$ for all $i \in V$.
\end{lemma}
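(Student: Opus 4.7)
The plan is to first prove $J_j(S)=\rs$, then propagate this to every vertex by irreducibility, and finally read off $J_{\ker,i}(S)=\rs$ directly from the definition of the kernel Julia set. Set $U:={\rm int}(J_{\ker,j}(S))$, a non-empty open subset of $\rs$ by hypothesis. By the very definition of the kernel Julia set, $h(U)\subset J_{t(h)}(S)$ for every $k\in V$ with $H_j^k(S)\neq\emptyset$ and every $h\in H_j^k(S)$. In particular, taking $h\in H_j^j(S)$ (which is non-empty by irreducibility), we obtain $h(U)\cap F_j(S)=\emptyset$ for every $h\in H_j^j(S)$.

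The heart of the argument will be Montel's three-points theorem. Suppose for contradiction that $F_j(S)$ contains three distinct points $a,b,c\in\rs$. Each $h\in H_j^j(S)$ is a rational map on $\rs$, hence meromorphic on $U$, and by the preceding paragraph omits the values $a,b,c$ on $U$. Therefore the family $H_j^j(S)|_U$ is normal on $U$, i.e.\ equicontinuous on $U$ in the spherical metric. However, by Lemma \ref{sysJulia=semiJulia_lemma} together with Remark \ref{sysJulia=semiJulia_rem}, $J_j(S)=J(H_j^j(S))$, so the set of non-equicontinuity of $H_j^j(S)$ is exactly $J_j(S)\supset U$, a contradiction. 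Hence $\#F_j(S)\le 2$, and since $J_j(S)$ is closed we conclude $J_j(S)=\rs$.

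To propagate, I fix any $k\in V$; by irreducibility there exists $h\in H_k^j(S)$, and Lemma \ref{inv_lemma}(i) (backward $S$-invariance of Julia sets) yields $h^{-1}(J_j(S))\subset J_k(S)$. Since $J_j(S)=\rs$, this forces $J_k(S)=\rs$ for every $k\in V$. For any $i\in V$, the definition now gives
\[
J_{\ker,i}(S)=\bigcap_{k:\,H_i^k(S)\neq\emptyset}\ \bigcap_{h\in H_i^k(S)} h^{-1}(J_k(S))=\bigcap h^{-1}(\rs)=\rs,
\]
where the outer intersection is non-empty by irreducibility. This completes the argument.

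The main obstacle will be the Montel step: it crucially uses $\Gamma_e\subset\rat$ (so elements of $H_j^j(S)$ are genuinely meromorphic on $U\subset\rs$), the identification $J_j(S)=J(H_j^j(S))$ from Lemma \ref{sysJulia=semiJulia_lemma}, and non-emptiness of $H_j^j(S)$ via irreducibility. The remaining steps are essentially bookkeeping with backward invariance of Julia sets and the defining intersection of the kernel Julia set.
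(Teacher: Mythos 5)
Your proof is correct, and it reaches the conclusion by a somewhat different organization of the same underlying Montel argument. The paper assumes $J_{\ker,i}(S)\neq\rs$ for some $i$, uses forward $S$-invariance of the kernel Julia sets (Lemma \ref{kerjulia_lemma}) to see that the family $H_j^i(S)$ maps ${\rm int}(J_{\ker,j}(S))$ into $J_{\ker,i}(S)$ and hence omits the three or more points of its complement, and then transfers equicontinuity from $H_j^i(S)$ back to $H_j^j(S)$ via Lemma \ref{equiconti_lemma} and the Arzel\`a--Ascoli theorem. You instead stay entirely at the single vertex $j$: the defining inclusion $J_{\ker,j}(S)\subset h^{-1}(J_j(S))$ for $h\in H_j^j(S)$ already shows that $H_j^j(S)$ omits any three points of $F_j(S)$ on $U$, so Montel together with the identification $J_j(S)=J(H_j^j(S))$ (Lemma \ref{sysJulia=semiJulia_lemma} and Remark \ref{sysJulia=semiJulia_rem}) forces $F_j(S)=\emptyset$; backward invariance then propagates $J_k(S)=\rs$ to every vertex and the kernel Julia sets are read off from their defining intersection. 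This buys you a proof that avoids Lemmas \ref{kerjulia_lemma} and \ref{equiconti_lemma} altogether and isolates the stronger intermediate statement $J_j(S)=\rs$. Two small points: the step ``$J_j(S)\supset U$'' that you use for the contradiction is not purely definitional — it needs $J_{\ker,j}(S)\subset h^{-1}(J_j(S))\subset J_j(S)$, i.e. backward $S$-invariance of the Julia sets from Lemma \ref{inv_lemma}, which you invoke later anyway; and the reason $\#F_j(S)\le 2$ forces $F_j(S)=\emptyset$ is that $F_j(S)$ is a \emph{open} subset of $\rs$ (equivalently, that $J_j(S)$ is closed, as you say), since a non-empty open subset of $\rs$ is infinite. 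Neither affects the validity of the argument.
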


\begin{proof}
The proof is by contradiction. 
It suffices to show that  $J_{i} (S)= \rs$ for all  $i \in V$.
We assume that there exists  $i \in V$ such that $J_{\ker, i} (S) \neq \rs$.
Then $\# \rs \setminus J_{{\rm ker} ,i}(S) \geq 3$ and $h( {\rm int}( J_{{\rm ker} ,j}(S) ) ) \subset  J_{{\rm ker} ,i} (S) $ for all $h \in H_j^i(S)$ by Lemma \ref{kerjulia_lemma}.
It consequently follows that $H_j^i(S)$ is normal on  ${\rm int}( J_{{\rm ker} ,j}(S) ) $.
Now we fix some  $g \in H_j^i$ and hence $g \circ H_j^j(S) \subset H_j^i(S)$.
By Lemma \ref{equiconti_lemma} and the  Arzel\'a-Ascoli theorem, the family $H_j^j(S)$ is equicontinuous on ${\rm int}( J_{{\rm ker} ,j} (S)) $, so that ${\rm int}( J_{{\rm ker} ,j}(S) ) \subset F_j(S)$.
This contradicts the fact that $\emptyset \neq {\rm int}( J_{{\rm ker} ,j} (S)) \subset   J_j(S)$.
\end{proof}

\begin{definition}
Let $\Lambda$ be a connected finite-dimensional complex manifold.
Let $\{ g_\lambda : \rs \to \rs \}_{\lambda \in \Lambda}$ be a family of non-constant rational maps on $\rs$.
We say that $\{ g_\lambda \}_{\lambda \in \Lambda}$ is a holomorphic family of rational  maps (over $\Lambda$) if 
the associated map $\rs \times \Lambda \ni (z , \lambda ) \mapsto g_\lambda (z) \in \rs$ is holomorphic.
\end{definition}

\begin{prop}\label{holofami_lemma}
Suppose that an irreducible rational GDMS $S= (V, E,(\Gamma_e)_{e \in E})$  has  $e \in E$ with the following property.
For all $z \in J_{i(e)}(S)$, there exists a holomorphic family of rational maps $\{ g_\lambda  \}_{\lambda \in \Lambda} \subset \Gamma_e$ such that the map $\Theta :\Lambda \ni \lambda  \mapsto g_\lambda (z) \in \rs$ is non-constant.

If, in addition, $F_{i(e)}(S) \neq \emptyset $, then $ J_{{\rm ker} ,i}(S) = \emptyset$ for all $i \in V$.
\end{prop}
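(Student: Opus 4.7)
The plan is to derive a contradiction by producing an open subset of a kernel Julia set and then invoking Lemma \ref{intJker_lemma}. First I would reduce to the statement that $J_{\ker, i(e)}(S) = \emptyset$. The family $(J_{\ker, j}(S))_{j \in V}$ is forward $S$-invariant by Lemma \ref{kerjulia_lemma}(i), so by Lemma \ref{inv_lemma}(iii) combined with irreducibility, if $J_{\ker, j}(S) = \emptyset$ for one $j$ then it is empty for every $j$. Assume for contradiction $J_{\ker, i(e)}(S) \neq \emptyset$ and fix $z_0$ in it. Since the system is irreducible, $H_{i(e)}^{i(e)}(S) \neq \emptyset$; picking any $h_0 \in H_{i(e)}^{i(e)}(S)$ and using the backward $S$-invariance of Julia sets (Lemma \ref{inv_lemma}(i), which applies because rational maps are open), we obtain $J_{\ker, i(e)}(S) \subset h_0^{-1}(J_{i(e)}(S)) \subset J_{i(e)}(S)$. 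In particular $z_0 \in J_{i(e)}(S)$, so the hypothesis on the holomorphic family applies at $z_0$.

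Next, pick the holomorphic family $\{g_\lambda\}_{\lambda \in \Lambda} \subset \Gamma_e$ such that $\Theta \colon \lambda \mapsto g_\lambda(z_0)$ is non-constant. By forward $S$-invariance of the kernel Julia sets applied to the edge $e$, every $g_\lambda(z_0)$ lies in $J_{\ker, t(e)}(S)$, so $\Theta(\Lambda) \subset J_{\ker, t(e)}(S)$. A non-constant holomorphic map from the connected finite-dimensional complex manifold $\Lambda$ to $\rs$ has non-empty open image: one reduces to the one-dimensional case by selecting a point $\lambda_0$ at which the differential of $\Theta$ does not vanish and restricting to a complex disc through $\lambda_0$ along which $\Theta$ is non-constant, then applying the classical open mapping theorem for non-constant holomorphic functions on a disc. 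Therefore $J_{\ker, t(e)}(S)$ has non-empty interior.

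Lemma \ref{intJker_lemma} then forces $J_{\ker, i}(S) = \rs$ for every $i \in V$, and in particular $J_{\ker, i(e)}(S) = \rs$. Combined with the inclusion $J_{\ker, i(e)}(S) \subset J_{i(e)}(S)$ from the first step, this yields $J_{i(e)}(S) = \rs$, contradicting the hypothesis $F_{i(e)}(S) \neq \emptyset$. Hence $J_{\ker, i(e)}(S) = \emptyset$, and by the reduction step $J_{\ker, i}(S) = \emptyset$ for every $i \in V$.

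The only non-routine point I expect is the openness of $\Theta(\Lambda)$ in $\rs$: it is immediate when $\dim_{\mathbb{C}} \Lambda = 1$, but in higher dimensions one must argue via a one-dimensional slice as sketched above, being careful that the slice can be chosen so that $\Theta$ restricted to it is still non-constant. Everything else is a direct assembly of the forward-invariance (Lemma \ref{kerjulia_lemma}), backward-invariance (Lemma \ref{inv_lemma}), and interior-dichotomy (Lemma \ref{intJker_lemma}) results already available in the paper.
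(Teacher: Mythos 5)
Your proof is correct and takes essentially the same route as the paper's: assume a point of $J_{\ker,i(e)}(S)$ exists, use forward invariance of the kernel Julia sets to place $\Theta(\Lambda)$ inside $J_{\ker,t(e)}(S)$, invoke openness of the image and Lemma \ref{intJker_lemma} to force $J_{\ker}(S)=\rs$ everywhere, contradict $F_{i(e)}(S)\neq\emptyset$, and then propagate emptiness to all vertices via irreducibility and Lemma \ref{inv_lemma}. You also correctly fill in two details the paper leaves implicit, namely the inclusion $J_{\ker,i(e)}(S)\subset J_{i(e)}(S)$ needed to apply the hypothesis at the chosen point, and the one-dimensional slice argument giving non-empty interior of $\Theta(\Lambda)$ when $\dim_{\mathbb{C}}\Lambda>1$.
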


\begin{proof}
The proof is by contradiction. 
Suppose  there exists an element $z \in J_{{\rm ker} ,i(e)} (S)$.
Fix a holomorphic family  $\{ g_\lambda  \}_{\lambda \in \Lambda} \subset \Gamma_e$ such that the map $\Theta :\Lambda \ni \lambda  \mapsto g_\lambda (z) \in \rs$  is non-constant.
Then $\Theta ( \Lambda)$ is non-empty and open  in $\rs$ and $\Theta ( \Lambda) \subset J_{{\rm ker} ,t(e)}(S)$ by Lemma \ref{kerjulia_lemma}.
It follows from Lemma \ref{intJker_lemma} that  $J_{t(e)}(S) \supset J_{{\rm ker} ,t(e)}(S) = \rs$, and this contradicts the assumption  $F_{i(e)}(S) \neq \emptyset $. 
By  Lemma \ref{inv_lemma},  $ J_{{\rm ker} ,i}(S) = \emptyset$ for all $i \in V$.
\end{proof}
\begin{cor}\label{cor-kernelJuliaCond}
Let $S$ be an irreducible polynomial GDMS. 
Suppose that $\Gamma _{e}$ is a compact subset of Poly for each $e\in E$. 
\begin{enumerate}
\item If there exists   $e \in E$ such that  ${\rm int} (\Gamma _{e} ) \neq \emptyset$,
then  $J_{{\rm ker} ,i} (S)=\emptyset$ for all $i\in V$.
Here, the symbol ${\rm int}$ denotes the set of all interior points in $\poly$.
\item If there exists $e \in E$,  $f\in \poly$ and a non-empty open set $U$ in $\Bbb{C}$ such that 
$\{ f+c \, ;\, c\in U\} \subset \Gamma _{e}$, then $J_{\ker ,i} (S) =\emptyset $ for all $i\in V$.
\end{enumerate}
\end{cor}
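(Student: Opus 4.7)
\textbf{Proof plan for Corollary \ref{cor-kernelJuliaCond}.}
The strategy is to reduce both parts to Proposition \ref{holofami_lemma} by exhibiting an explicit one-parameter holomorphic family of translates inside $\Gamma_e$. First I would dispose of the auxiliary hypothesis $F_{i(e)}(S)\neq \emptyset$ in that proposition. Since $E$ is finite and each $\Gamma_e$ is a compact subset of $\poly$, the union $\bigcup_{e\in E}\Gamma_e$ is a compact family of polynomials of degree at least two. A standard uniform escape estimate then yields $R>0$ such that $|p(z)|\geq 2|z|$ for every $p\in \bigcup_e\Gamma_e$ and every $z$ with $|z|\geq R$. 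Iterating this bound shows that $\{|z|>R\}\cup \{\infty\}\subset F_i(S)$ for every $i\in V$, so in particular $J_{i(e)}(S)\subset \mathbb{C}$ and $F_{i(e)}(S)\neq \emptyset$.

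For part (ii), set $\Lambda := U\subset \mathbb{C}$ and $g_\lambda := f+\lambda$. The map $(z,\lambda)\mapsto f(z)+\lambda$ is holomorphic, so $\{g_\lambda\}_{\lambda \in \Lambda}$ is a holomorphic family of rational maps contained in $\Gamma_e$ by hypothesis. For any $z\in \mathbb{C}$, the associated map $\Theta\colon \lambda \mapsto f(z)+\lambda$ is affine and non-constant. Combined with the inclusion $J_{i(e)}(S)\subset \mathbb{C}$ from the preceding step, the hypotheses of Proposition \ref{holofami_lemma} are satisfied, and that proposition immediately gives $J_{\ker,i}(S)=\emptyset$ for all $i\in V$.

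For part (i), I would deduce it from (ii). Pick $f_0\in \mathrm{int}(\Gamma_e)$. The translation map $\mathbb{C}\to \poly$ sending $c$ to $f_0+c$ is continuous with respect to the metric $\kappa$, because adding a constant $c$ perturbs a polynomial by at most a quantity depending only on $|c|$ in the spherical norm $\kappa$. Hence there exists $\varepsilon >0$ such that $\{f_0+c : |c|<\varepsilon\}\subset \Gamma_e$. Applying part (ii) with $f=f_0$ and $U=\{c\in \mathbb{C}: |c|<\varepsilon\}$ yields the desired conclusion.

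The only place requiring genuine care is the initial claim that $\infty \in F_i(S)$, i.e.\ the existence of a uniform escape radius for the compact family $\bigcup_e \Gamma_e$. This is routine --- it follows from compactness together with the fact that every $p\in \poly$ is asymptotic to its leading monomial near infinity --- but it is the one place where we actually use both compactness of $\Gamma_e$ and the degree-$\geq 2$ hypothesis, without which Proposition \ref{holofami_lemma} could not be invoked.
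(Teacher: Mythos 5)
Your proof is correct and takes essentially the same route as the paper's (very terse) proof: establish $\infty \in F_{i}(S)$ from compactness of the sets $\Gamma_{e}$, then invoke Proposition \ref{holofami_lemma} using the holomorphic family of translates $f+\lambda$. You have simply filled in the details the paper leaves implicit, including the reduction of (i) to (ii) via continuity of $c\mapsto f_{0}+c$ in the metric $\kappa$, all of which is sound.
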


\begin{proof}
Since $\Gamma _{e}$ is a compact
subset of Poly for each $e\in E$, we have $\infty \in F_{i}(S)$ for each $i\in
V.$  Combining this with  Proposition 4.13, the statements (i) and (ii)
of our corollary hold. 
\end{proof}

%
%
%
%
%
\subsection{Probability tending to $\infty$}\label{probfunc_section}
In this subsection, we investigate polynomial MRDS induced by $\tau = (\tau_{ij})_{i,j =1, \dots,m}$ and its associated GDMS  $S_\tau = (V, E,(\Gamma_e)_{e \in E})$ such that $\Gamma_e$ is a compact subset of  $\poly$  for each $e \in E$.
For the definition of $S_\tau$, see Setting \ref{ms_setting} and Definition \ref{mstau_rem}.
Polynomial maps of degree $2$ or more have a common attracting fixed point at infinity, and hence 
some random orbits may tend to infinity.
We define the function $\mathbb{T}_{\infty, \tau} \colon \rs \times V \to [0, 1]$ which represents the probability of tending to infinity and give some sufficient conditions for $\mathbb{T}_{\infty, \tau}$ to be continuous on the whole space.
Moreover, we show that $\mathbb{T}_{\infty, \tau}$ is continuous on $\mathbb{Y}$  and varies precisely on the Julia set $\mathbb{J} (S_\tau)$ under certain conditions.
Recall that $\mathbb{Y}:=\rs \times V$.

\begin{definition}\label{Tinfty}
We define the function $\mathbb{T}_{\infty, \tau} \colon \mathbb{Y} \to [0, 1]$ by
\[\mathbb{T}_{\infty ,\tau}(z,i) := \tilde{\tau}_i (\{ \xi = (\gamma_n , e_n )_{n \in \nn} \in X_i(S_\tau) \, ; \, d(\gamma_{n,1}(z), \infty) \to 0 \, (n \to \infty) \} ) \]
for any point  $(z,i) \in \rs \times V$.
If $S_\tau$ is irreducible, we fix the vector  $p$ of Lemma \ref{irred_lemma} and define ${T}_{\infty, \tau} \colon \rs  \to [0, 1]$ by
\begin{align*}
 T_{\infty, \tau}(z):&= \sum_{i=1}^m p_i \mathbb{T}_{\infty ,\tau}(z,i) 
= \tilde{\tau} (\{ \xi = (\gamma_n , e_n )_{n \in \nn} \in X(S_\tau) \, ; \, d(\gamma_{n,1}(z), \infty) \to 0 \, (n \to \infty) \} ) .
 \end{align*}
\end{definition}

The function $ T_{\infty, \tau}$ is associated with  the following random dynamical systems. 
Fix an initial point $z \in \rs.$
We choose a vertex $i = 1, \dots, m$ with  probability $p_{i}.$ 
At the first step, we choose a vertex $i_{1} = 1, \dots, m$ with probability $\tau_{i i_{1} }(\poly)$ and choose a map $f_{1}$ according to the probability distribution $ \tau_{i i_{1}} / \tau_{i i_{1} }(\poly).$ 
Repeating this, we randomly choose  a map $f_{n}$ for each $n$-th step. 
Then the random orbit $f_{n} \circ \dots \circ f_{2} \circ f_{1} (z)$ tends to the point at infinity with probability $T_{\infty, \tau}(z).$  

We need the following lemma which can be easily shown.

\begin{lemma}\label{lem-attract}
Let $\Gamma$ be a compact subset of $\poly$.
Then there exists an open neighborhood  $U$ of  $\infty$ such that for all $\gamma =(\gamma_1, \gamma_2, \dots ) \in \Gamma^{\nn}$, we have $\gamma_{n,1}  \to \infty$ as $n \to \infty$ locally uniformly on $U$.
\end{lemma}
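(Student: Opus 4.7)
The plan is to exhibit a neighborhood of $\infty$ of the form $U = \{z \in \rs : |z| > R\} \cup \{\infty\}$ on which every $f \in \Gamma$ expands uniformly: $|f(z)| \geq 2|z|$. Once such a $U$ is found it is forward invariant under every $f \in \Gamma$, induction gives $|\gamma_{n,1}(z)| \geq 2^n |z|$ for each $z \in U \setminus \{\infty\}$ and each $\gamma \in \Gamma^{\nn}$, and since $\inf_{z \in K}|z| > R$ on every compact $K \subset U \setminus \{\infty\}$, the convergence $\gamma_{n,1} \to \infty$ will be locally uniform on $U$ (uniform in $\gamma$).

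To produce $R$ I would pass to the chart $w = 1/z$ near $\infty$. For each $f \in \poly$ set $g_f(w) := 1/f(1/w)$; this extends holomorphically across $w = 0$ with a zero of order $\deg f \geq 2$ there, so $h_f(w) := g_f(w)/w$ is holomorphic near $0$ and vanishes at $0$. The bound $|f(z)| \geq 2|z|$ on $|z| \geq R$ is equivalent to $|h_f(w)| \leq 1/2$ on $|w| \leq 1/R$. It therefore suffices to find a single $r > 0$ with $|h_f(w)| \leq 1/2$ for every $f \in \Gamma$ and every $|w| \leq r$.

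I would establish this uniform estimate by a compactness argument. Suppose no such $r$ existed: then one can pick $f_n \in \Gamma$ and $w_n \to 0$ with $|h_{f_n}(w_n)| > 1/2$. By compactness of $\Gamma$ in $\rat$, a subsequence satisfies $f_n \to f \in \Gamma \subset \poly$ in the spherical metric $\kappa$. Since every $f_n$ and $f$ fixes $\infty$ with local degree $\geq 2$, uniform spherical convergence of $f_n$ to $f$ forces $g_{f_n} \to g_f$ in the Euclidean sense, uniformly on a small Euclidean disk $D \ni 0$ (on $D$ the $g_{f_n}$-values cluster near $0 \in \mathbb{C}$, where the spherical and Euclidean metrics are comparable). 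Cauchy's estimates then promote this to uniform convergence $h_{f_n} \to h_f$ on a slightly smaller disk; since $h_f$ is continuous with $h_f(0) = 0$ and $w_n \to 0$, we obtain $h_{f_n}(w_n) \to 0$, contradicting $|h_{f_n}(w_n)| > 1/2$.

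The main obstacle is this last step: translating uniform convergence in the spherical metric on $\rat$ into Euclidean uniform convergence of the local conjugates $g_{f_n}$ near the fixed point at $\infty$. The delicate point is excluding the possibility that poles of $g_{f_n}$ migrate toward $0$, which is where the hypothesis $\Gamma \subset \poly$ (compact, so degrees are bounded and leading coefficients are bounded away from $0$) is essential.
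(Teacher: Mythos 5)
Your argument is correct. Note first that the paper itself gives no proof of this lemma -- it is introduced with ``which can be easily shown'' -- so there is nothing to compare against line by line; your write-up supplies exactly the kind of standard compactness argument that the authors are implicitly invoking (find a single $R$ with $|f(z)|\geq 2|z|$ for all $f\in\Gamma$ and $|z|\geq R$, then iterate on the forward-invariant set $\{|z|>R\}\cup\{\infty\}$). Two small remarks on the details. In the step where you pass from uniform convergence of $g_{f_n}-g_f$ to uniform convergence of $h_{f_n}-h_f$, the cleanest tool is the maximum principle (or the Schwarz lemma) applied to $(g_{f_n}-g_f)/w$, which is holomorphic because the numerator vanishes at $w=0$; Cauchy estimates also work via $g(w)/w=\int_0^1 g'(tw)\,dt$, so this is only a matter of presentation. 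Second, the ``main obstacle'' you flag at the end -- poles of $g_{f_n}$ migrating toward $0$ -- is in fact already resolved by the argument in your third paragraph: since $z\mapsto 1/z$ is a spherical isometry, $\kappa(f_n,f)\to 0$ gives $\sup_w d(g_{f_n}(w),g_f(w))\to 0$, and on a disk where $|g_f|$ is small this pins the values of $g_{f_n}$ inside a small Euclidean disk around $0$ (so in particular excludes poles) and makes the spherical and Euclidean metrics comparable there; the hypothesis $\Gamma\subset\poly$ enters only through the limit $f$ lying in $\poly$, which forces $g_f$ to vanish to order $\deg f\geq 2$ at $0$ and hence $h_f(0)=0$. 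You do not separately need boundedness of degrees or of leading coefficients. Finally, your proof yields convergence that is uniform in the sequence $\gamma\in\Gamma^{\nn}$ as well as locally uniform in $z$, which is slightly stronger than the statement but is what is actually used later (e.g.\ in the identity $A_\xi=\bigcup_{n}\gamma_{n,1}^{-1}(U)$ in Lemma \ref{prop-JEquiv}).
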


\begin{cor}\label{cor-cpt}
Let $S= (V, E,(\Gamma_e)_{e \in E})$  be a polynomial GDMS such that $\Gamma_e$ is a compact subset of  $\poly$  for all $e \in E$.
Then the Julia set $J_i(S)$ is a compact subset of $\Bbb{C}$ for all  $i \in V$.
\end{cor}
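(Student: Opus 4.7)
The target statement only adds one piece of information beyond what is already in the Remark following Definition \ref{julia_def}, namely that $J_i(S)$ is a compact subset of $\hat{\mathbb{C}}$. What remains is to show that $\infty \notin J_i(S)$ for every $i \in V$; together with the earlier compactness in $\hat{\mathbb{C}}$, this will force $J_i(S)$ to be a closed and bounded subset of $\mathbb{C}$, hence compact in $\mathbb{C}$.

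The plan is to exhibit an open neighborhood of $\infty$ that lies inside $F_i(S)$ for every $i \in V$. First, I would form the finite union
\[
\Gamma \;:=\; \bigcup_{e \in E} \Gamma_e,
\]
which is a compact subset of $\poly$ because $E$ is finite and each $\Gamma_e$ is compact in $\poly$ by hypothesis. Applying Lemma \ref{lem-attract} to this $\Gamma$, I obtain an open neighborhood $U$ of $\infty$ such that for every sequence $(\gamma_n)_{n \in \nn} \in \Gamma^{\nn}$, the iterated compositions $\gamma_{n,1} = \gamma_n \circ \cdots \circ \gamma_1$ converge to $\infty$ locally uniformly on $U$ as $n \to \infty$.

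Now, fix $i \in V$. By definition, every element of $H_i(S)$ is of the form $\gamma_n \circ \cdots \circ \gamma_1$ with each $\gamma_k \in \Gamma$. Hence on $U$ every element of $H_i(S)$ agrees with a truncation of some admissible composition from the family above, and those compositions converge locally uniformly to the constant map $\infty$ on $U$. In particular the family $H_i(S)$ is equicontinuous on $U$ (it is a precompact family of continuous maps into $\hat{\mathbb{C}}$ whose limit points are all constant equal to $\infty$ on compact subsets of $U$). Therefore $U \subset F_i(S)$, so $\infty \in F_i(S)$ and $\infty \notin J_i(S)$.

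Since $J_i(S)$ is already a closed (hence compact) subset of $\hat{\mathbb{C}}$ and is disjoint from the open set $U \ni \infty$, it is a closed subset of $\hat{\mathbb{C}} \setminus \{\infty\} = \mathbb{C}$, bounded because it sits inside $\hat{\mathbb{C}} \setminus U$. Thus $J_i(S)$ is compact in $\mathbb{C}$, as required. There is no real obstacle here; the only point worth noting is that Lemma \ref{lem-attract} must be applied to the full union $\Gamma$ rather than to individual $\Gamma_e$, so that the neighborhood $U$ is chosen uniformly with respect to all admissible compositions of arbitrary length.
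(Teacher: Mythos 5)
Your proof is correct and is essentially the argument the paper intends: Corollary \ref{cor-cpt} is stated as an immediate consequence of Lemma \ref{lem-attract} applied to $\Gamma=\bigcup_{e\in E}\Gamma_e$, which yields a neighborhood $U$ of $\infty$ contained in $F_i(S)$ for every $i\in V$, forcing $J_i(S)\subset\rs\setminus U\subset\mathbb{C}$. The only cosmetic point is that the equicontinuity of $H_i(S)$ on $U$ is most cleanly justified by observing that $H_i(S)(U)\subset U$ omits a disk and invoking Montel's theorem, rather than by asserting precompactness of the family (which is equivalent to the equicontinuity being proved).
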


\begin{definition}\label{AK}
Let $ \xi = (\gamma_n , e_n )_{n \in \nn} \in X(S)$.
We denote by $A_\xi$ the set of all points $z$ such that $\gamma_{n,1}(z) \to \infty$ as  $n \to \infty$ and 
denote by $K_\xi$  the complement $\rs \setminus A_\xi$. 
\end{definition}

\begin{lemma}\label{prop-JEquiv}
Let $S= (V, E,(\Gamma_e)_{e \in E})$  be a polynomial GDMS such that $\Gamma_e$ is a compact subset of  $\poly$  for all $e \in E$.
Then the set $A_\xi$ is a  non-empty open set and $J_\xi =\partial K_\xi= \partial A_\xi$  for each $ \xi = (\gamma_n , e_n )_{n \in \nn} \in X(S)$.
\end{lemma}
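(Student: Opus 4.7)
My plan is to exploit the dichotomy: points whose forward orbit eventually enters a fixed neighborhood of $\infty$ (where all admissible compositions escape uniformly to $\infty$) versus points whose orbit is trapped in a compact subset of $\mathbb{C}$. The target identity $J_\xi=\partial A_\xi=\partial K_\xi$ will follow from showing $A_\xi$ is open, $\mathrm{int}(A_\xi)\cup\mathrm{int}(K_\xi)\subset F_\xi$, and $\partial A_\xi\subset J_\xi$.

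First I would set $\Gamma:=\bigcup_{e\in E}\Gamma_e$; since $E$ is finite and each $\Gamma_e$ is compact in $\poly$, $\Gamma$ is compact in $\poly$, so Lemma \ref{lem-attract} furnishes an open neighborhood $U$ of $\infty$ in $\rs$ on which every sequence of compositions from $\Gamma^{\nn}$ converges locally uniformly to $\infty$. Non-emptiness is then immediate: $U\subset A_\xi$ because any $\gamma\in X(S)$ yields a sequence from $\Gamma^{\nn}$, and trivially $\infty\in A_\xi$. Openness of $A_\xi$: given $z\in A_\xi$, pick $N$ so large that $\gamma_{N,1}(z)\in U$; by continuity of the single map $\gamma_{N,1}$, a neighborhood $W$ of $z$ satisfies $\gamma_{N,1}(W)\subset U$, and applying Lemma \ref{lem-attract} to the shifted sequence $\gamma_{N+1},\gamma_{N+2},\dots$ forces $\gamma_{n,1}(w)\to\infty$ uniformly for $w\in W$. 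Hence $K_\xi$ is closed, and the elementary identity $\partial A_\xi=\partial K_\xi$ follows because $A_\xi$ and $K_\xi$ partition $\rs$.

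Next I would prove the key trapping fact: if $w\in K_\xi$, then $\gamma_{n,1}(w)\notin U$ for every $n$, because otherwise Lemma \ref{lem-attract} applied to the tail would give $\gamma_{n,1}(w)\to\infty$, contradicting $w\in K_\xi$. Consequently $\gamma_{n,1}(\mathrm{int}(K_\xi))\subset\rs\setminus U$, a fixed set omitting the open set $U$ (which certainly contains more than two points), so Montel's theorem yields normality of $\{\gamma_{n,1}\}$ on $\mathrm{int}(K_\xi)$ and hence $\mathrm{int}(K_\xi)\subset F_\xi$. For the inclusion $A_\xi\subset F_\xi$, I reuse the openness argument: on the neighborhood $W$ above, $\gamma_{n,1}\to\infty$ uniformly for $n>N$, which is equicontinuity of the tail, and the first $N$ maps are each continuous, so the full family is equicontinuous at $z$.

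Finally, for $\partial A_\xi\subset J_\xi$, take $z\in\partial A_\xi$. Since $\partial A_\xi\subset\rs\setminus A_\xi=K_\xi$, the trapping fact gives $\gamma_{n,1}(z)\in\rs\setminus U$ for all $n$, so there is a definite spherical distance $\varepsilon_0>0$ with $d(\gamma_{n,1}(z),\infty)\ge\varepsilon_0$. On the other hand every neighborhood of $z$ meets $A_\xi$, so it contains a point $w$ with $d(\gamma_{n,1}(w),\infty)<\varepsilon_0/2$ for all large $n$; this destroys equicontinuity at $z$ and places $z$ in $J_\xi$. Putting the two inclusions together yields $J_\xi=\partial A_\xi=\partial K_\xi$.

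The only step that requires genuine care is the trapping claim for $K_\xi$ and the ensuing application of Montel, since one must be sure that no orbit of a point in $K_\xi$ ever visits the escaping neighborhood $U$; everything else is a fairly standard tail/continuity argument. The compactness of $\Gamma$ (and hence the uniformity in Lemma \ref{lem-attract}) is the feature that makes this non-autonomous argument behave just like the classical single-polynomial case.
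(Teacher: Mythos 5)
Your proof is correct and follows essentially the same route as the paper's: fix the escaping neighborhood $U$ from Lemma \ref{lem-attract} for the compact set $\Gamma=\bigcup_{e\in E}\Gamma_e$, identify $A_\xi=\bigcup_{n}\gamma_{n,1}^{-1}(U)$ (open and non-empty), put $A_\xi$ and ${\rm int}(K_\xi)$ into $F_\xi$ via locally uniform convergence and Montel respectively, and show equicontinuity fails on $\partial A_\xi$. The only difference is that you spell out the tail/trapping details that the paper leaves implicit; no gap.
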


\begin{proof}
Set $\Gamma :=\bigcup_{e \in E} \Gamma_e$.
We apply Lemma \ref{lem-attract} and fix the open neighborhood $U$ of $\infty$ in Lemma \ref{lem-attract}.
It follows easily that  $A_\xi = \bigcup_{n \in \nn} \gamma_{n,1}^{-1} (U )$ and hence $A_\xi$ is a non-empty open set.
For any open set $W$ of $\rs$ which meets $\partial K_\xi= \partial A_\xi$, the family $\{ \gamma_{n,1} \}_{n \in \nn}$  is not equicontinuous on $W$.
Thus $\partial K_\xi \subset J_\xi$.
Conversely, since  $\gamma_{n,1}(z) \to \infty$ as  $n \to \infty$ locally uniformly on $A_\xi$, we have $A_\xi \subset F_\xi$.
In addition, since $\gamma_{n,1}(K_\xi \setminus \partial K_\xi ) \subset \rs \setminus U$ for any $n \in \nn$, we have $K_\xi \setminus \partial K_\xi \subset F_\xi$  by Montel's theorem.
Therefore, $J_\xi \subset \partial K_\xi$.
\end{proof}

\begin{definition}\label{def-smallestFilledJ}
Let $S= (V, E,(\Gamma_e)_{e \in E})$  be a polynomial GDMS such that $\Gamma_e$ is a compact subset of  $\poly$  for all $e \in E$.
We denote by $K_i(S)$ the set of all points $z \in \rs$ such that $H_i(S)(z)$ is bounded in $\mathbb{C}$.
We call $K_i(S)$ the smallest filled-in Julia set of $S$ at $i \in V$.
\end{definition}

For the rest of this subsection, we consider a polynomial MRDS induced by $\tau$ and  $S_\tau = (V, E,(\Gamma_e)_{e \in E})$   \if0 satisfying Setting \ref{ms_setting}  \fi such that $\Gamma_e$ is a compact subset of  $\poly$  for each $e \in E$.

\begin{lemma}\label{locconstonF_lemma}
The function $\mathbb{T}_{\infty, \tau}$ is locally constant on  $\mathbb{F}(S_\tau)$.
If $\tau$ is irreducible (i.e., $S_{\tau}$ is irreducible), then ${T}_{\infty, \tau}$ is  locally constant on ${F}(S_\tau)$.
\end{lemma}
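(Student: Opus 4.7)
The plan rests on two ingredients: the equicontinuity of $H_i(S_\tau)$ near any $z_0 \in F_i(S_\tau)$, which will uniformly control $\gamma_{n,1}$ along every admissible path $\xi \in X_i(S_\tau)$, and the super-attraction at $\infty$ supplied by Lemma \ref{lem-attract} applied to the compact set $\Gamma := \bigcup_{e \in E} \Gamma_e \subset \poly$, which furnishes a fixed open neighborhood $U$ of $\infty$ from which every forward composition of maps drawn from $\Gamma$ converges to $\infty$.

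Fixing $(z_0, i) \in \mathbb{F}(S_\tau)$, I would choose $r > 0$ with $B(\infty, r) \subset U$ and invoke equicontinuity of $H_i(S_\tau)$ at $z_0$ to obtain $\delta > 0$ such that $d(h(z), h(z_0)) < r/2$ for every $h \in H_i(S_\tau)$ and every $z \in B(z_0, \delta)$. Since each $\gamma_{n,1}$ arising from an $\xi = (\gamma_n, e_n)_n \in X_i(S_\tau)$ belongs to $H_i(S_\tau)$, this bound holds uniformly in both $n$ and $\xi$. Given $z \in B(z_0, \delta)$ and an $\xi$ with $\gamma_{n,1}(z_0) \to \infty$, I would pick $N$ with $d(\gamma_{N,1}(z_0), \infty) < r/2$; the triangle inequality then puts $\gamma_{N,1}(z) \in B(\infty, r) \subset U$, and Lemma \ref{lem-attract} applied to the shifted sequence $(\gamma_{N+k})_{k \ge 1}$ forces $\gamma_{n,1}(z) \to \infty$. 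Since the equicontinuity bound is symmetric in $z$ and $z_0$, the events $\{\xi : \gamma_{n,1}(z_0) \to \infty\}$ and $\{\xi : \gamma_{n,1}(z) \to \infty\}$ coincide as subsets of $X_i(S_\tau)$, so $\mathbb{T}_{\infty, \tau}(z, i) = \mathbb{T}_{\infty, \tau}(z_0, i)$ for all $z \in B(z_0, \delta)$, proving the first assertion.

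The irreducible case then follows formally: if $z_0 \in F(S_\tau) = \bigcap_{i \in V} F_i(S_\tau)$, the first part yields for each $i \in V$ an open neighborhood $U_i$ of $z_0$ on which $\mathbb{T}_{\infty, \tau}(\cdot, i)$ is constant, and on the finite intersection $\bigcap_{i \in V} U_i$ the convex combination $T_{\infty, \tau} = \sum_{i \in V} p_i \mathbb{T}_{\infty, \tau}(\cdot, i)$ is constant as well. The main subtlety lies in the middle paragraph: equicontinuity by itself only pins $\gamma_{n,1}(z)$ within $r/2$ of $\gamma_{n,1}(z_0)$, which a priori would allow the orbit of $z$ merely to hover near $\infty$ without converging. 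It is precisely the uniform basin structure at $\infty$ guaranteed by Lemma \ref{lem-attract} — a consequence of the degree $\ge 2$ hypothesis on the maps in $\poly$ — that upgrades ``eventually close to $\infty$'' into genuine convergence to $\infty$ and closes the argument.
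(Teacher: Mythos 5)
Your proof is correct, but it takes a different route from the paper's. The paper fixes a connected component $U$ of $F_i(S_\tau)$ and argues topologically: by the forward $S_\tau$-invariance of the Fatou sets (Lemma \ref{inv_lemma}), each image $\gamma_{n,1}(U)$ lies inside a single connected component of $F_{t(e_n)}(S_\tau)$, so for a given $\xi$ the condition ``some $\gamma_{N,1}(z)$ lands in the component of $F_{t(e_N)}(S_\tau)$ containing $\infty$'' holds for all $z\in U$ or for none, and this condition is equivalent to $\gamma_{n,1}(z)\to\infty$. You instead argue metrically: equicontinuity of $H_i(S_\tau)$ near $z_0$ pins every $\gamma_{n,1}(z)$ within $r/2$ of $\gamma_{n,1}(z_0)$ uniformly in $n$ and $\xi$, and the uniform basin $U$ of $\infty$ from Lemma \ref{lem-attract} (applied to the compact set $\Gamma=\bigcup_e\Gamma_e$ and to the shifted tail of $\xi$) converts ``eventually within $r$ of $\infty$'' into genuine convergence; you correctly flag this last upgrade as the crux. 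Your version is more quantitative and self-contained --- in particular it makes explicit the ``if'' direction that the paper's component-landing criterion leaves implicit --- while the paper's version delivers directly the slightly stronger statement, used later in the proof of Theorem \ref{theorem-Tinfty}, that $\mathbb{T}_{\infty,\tau}(\cdot,i)$ is constant on each connected component of $F_i(S_\tau)$; your local constancy recovers that form anyway, since a locally constant function on an open set is constant on its components. Your reduction of the irreducible case to a finite convex combination over $F(S_\tau)=\bigcap_{i\in V}F_i(S_\tau)$ matches the paper.
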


\begin{proof}
Fix any connected component $U$ of the Fatou set $F_i(S_\tau)$ at $i \in V$.
For each $\xi = (\gamma_n , e_n )_{n \in \nn} \in X_i(S_\tau) $, it follows by Lemma \ref{inv_lemma} that  $\gamma_{n,1}(U) $ is contained in some connected component of $F_{t(e_n)}(S_\tau)$.
Thus,  for any point $z \in U$, we have $\gamma_{n,1}(z) \to \infty$ if and only if there exists $N \in \nn$ such that $\gamma_{N,1}(z)$ is contained in the  connected component of  $F_{t(e_N)}(S_\tau)$ which contains $\infty$.
Consequently, the function $\Bbb{T}_{\infty, \tau }(\cdot, i)$ is constant on $U$ and hence
$\Bbb{T}_{\infty ,\tau }$ is locally constant on $\Bbb{F}(S_{\tau })$.
If $S_{\tau }$ is irreducible, then $T_{\infty ,\tau }$ is locally constant on $F(S_{\tau })=\bigcap_{i \in V} F_i(S_\tau)$.
\end{proof}

\begin{lemma}\label{lem-filledEquiv}
\begin{enumerate}
\item $K_i(S_\tau) = \{ z \in \rs \, ;\, \mathbb{T}_{\infty,\tau }(z , i)=0 \} $ for all  $i \in V$.
\item The smallest filled-in Julia set  $K_i(S_\tau)$  is empty for all  $i \in V$ if and only if $\mathbb{T}_{\infty,\tau}(\cdot, i) \equiv 1$ for all $i \in V$.
\item If $\mathbb{T}_{\infty,\tau} \equiv 1$, then $\Bbb{J}_{\rm ker} (S_\tau) = \emptyset$.
\end{enumerate}
\end{lemma}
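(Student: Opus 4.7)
First, for (i), the inclusion $K_i(S_\tau) \subset \{z : \mathbb{T}_{\infty,\tau}(z, i) = 0\}$ is immediate from the definitions: every admissible composition $\gamma_{n,1}(z)$ then lies in the bounded set $H_i(S_\tau)(z) \subset \mathbb{C}$ and cannot tend to $\infty$. For the reverse inclusion, set $\Gamma := \bigcup_{e \in E} \Gamma_e$, which is compact in $\poly$, and pick an open neighborhood $U$ of $\infty$ satisfying $f(U) \subset U$ for every $f \in \Gamma$ (a set of the form $\{|w| > R\}$ works because the polynomials in $\Gamma$ have degree at least two and uniformly controlled leading coefficients by compactness). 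Lemma \ref{lem-attract} then implies that once any random orbit enters $U$ it tends to $\infty$. If $z \notin K_i(S_\tau)$, then $H_i(S_\tau)(z)$ is unbounded in $\mathbb{C}$ and some admissible $f_N \circ \cdots \circ f_1$ starting at $i$ maps $z$ into $U$; using $\Gamma_{e_n} = \supp \tau_{e_n}$ and continuity of composition, open neighborhoods $A_n$ of $f_n$ in $\Gamma_{e_n}$ with $\tau_{e_n}(A_n) > 0$ can be chosen so that every realization in the associated cylinder in $X_i(S_\tau)$ still sends $z$ into $U$ after $N$ steps, yielding $\mathbb{T}_{\infty,\tau}(z,i) \geq \prod_{n=1}^{N}\tau_{e_n}(A_n) > 0$.

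For (ii), the direction $\Leftarrow$ is immediate from (i). For $\Rightarrow$, my plan is to apply Lemma \ref{lem-M} with the constant family $U_j := U$, which is forward $S_\tau$-invariant since $\Gamma(U) \subset U$. The associated set $L_{{\rm ker}, j}$ equals $\{z \in \rs : H_j(S_\tau)(z) \subset \rs \setminus U\}$, and the hypothesis $K_j(S_\tau) = \emptyset$ forces $L_{{\rm ker}, j} = \emptyset$: each $z \in \mathbb{C}$ has unbounded forward orbit and hence meets $U$, while $H_j(S_\tau)(\infty) = \{\infty\} \subset U$. The convention $d(\cdot, \emptyset) = \infty$ then makes the conclusion of Lemma \ref{lem-M} vacuously impossible, so the event $\{\xi : \gamma_{n,1}(y) \notin U \text{ for every } n \in \nn\}$ must be $\tilde{\tau}_i$-null for every $(y, i) \in \mathbb{Y}$. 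Lemma \ref{lem-attract} then converts almost-sure eventual hitting of $U$ into $\gamma_{n,1}(y) \to \infty$, giving $\mathbb{T}_{\infty, \tau}(y, i) = 1$ everywhere.

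For (iii), the key step is the pointwise inclusion $J_{{\rm ker}, i}(S_\tau) \subset K_i(S_\tau)$. If $H_i^j(S_\tau) = \emptyset$ for every $j$ then $J_{{\rm ker}, i}(S_\tau) = \emptyset$ by convention; otherwise, for $z \in J_{{\rm ker}, i}(S_\tau)$ each admissible $h \in H_i^j(S_\tau)$ sends $z$ into $J_j(S_\tau)$, and Corollary \ref{cor-cpt} makes each $J_j(S_\tau)$ a compact subset of $\mathbb{C}$, so $H_i(S_\tau)(z) \subset \bigcup_{j \in V} J_j(S_\tau)$ is bounded (using finiteness of $V$) and $z \in K_i(S_\tau)$. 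Combining this inclusion with (i), the hypothesis $\mathbb{T}_{\infty,\tau} \equiv 1$ gives $K_i(S_\tau) = \emptyset$ for every $i \in V$, hence $J_{{\rm ker}, i}(S_\tau) = \emptyset$ for every $i$, and $\mathbb{J}_{{\rm ker}}(S_\tau) = \emptyset$.

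The main obstacle is the direction $\Rightarrow$ of (ii): one must upgrade the merely existential statement $K_j(S_\tau) = \emptyset$ (from every starting state some orbit reaches $U$) into the almost-sure statement that every random orbit reaches $U$ eventually. The clean route is the observation that, with $U_j = U$, the hypothesis $K_j(S_\tau) = \emptyset$ precisely makes the set $L_{{\rm ker}, j}$ of Lemma \ref{lem-M} empty, so its conclusion $d(\gamma_{n,1}(y), L_{{\rm ker}, t(e_n)}) \to 0$ becomes vacuously impossible and the exceptional event must be $\tilde{\tau}_i$-null.
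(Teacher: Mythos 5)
Your proof is correct and follows essentially the same route as the paper's: part (i) by exhibiting a positive-measure cylinder of realizations that carry $z$ into a forward-invariant neighborhood of $\infty$, and parts (ii)--(iii) via Lemma \ref{lem-M} applied with a forward-invariant neighborhood of $\infty$. The only cosmetic differences are that the paper takes $U_j$ to be the Fatou component $U_{\infty,j}$ containing $\infty$ where you take the round neighborhood from Lemma \ref{lem-attract}, and for (iii) your direct inclusion $J_{\ker,i}(S_\tau)\subset K_i(S_\tau)$ (via Corollary \ref{cor-cpt}) replaces the paper's appeal to the sets $L_{\ker,j}$; these are equivalent.
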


\begin{proof}
Evidentally, we have  $\Bbb{T}_{\infty ,\tau }(\cdot,i) \equiv 0$ on $K_{i}(S_{\tau })$.
Let $U_{\infty, j}$ be the connected component of  $F_j(S_\tau)$ which contains $\infty$.
Then the family $( U_{\infty, j} )_{j \in V}$ is forward $S_\tau$-invariant.
For any  $z \notin K_i(S_\tau) $, there exists  $h \in H_i^j(S_\tau)$ such that $h(z) \in U_{\infty, j}$.
Thus, there exist a finite admissible word $(e_1, \dots, e_N) \in E^N$ with initial vertex $i$ and maps  $\alpha_n \in \Gamma_{e_n} = \supp \tau_{e_n}$ such that $h=\alpha_N \circ \cdots \circ \alpha_1$.
For each $n=1, \dots,N$, there exists a neighborhood $A_n$ of $\alpha_n$ in $\poly$ such that $\gamma_N \circ \cdots \circ \gamma_1 (z) \in U_{\infty, j}$ for all  $\gamma_n \in A_n\  (n =1, \dots, N)$.
Now we set 
\[ \tilde{A} =A_1' \times \cdots \times A_N ' \times \prod_{N+1}^{\infty} (\poly \times E) , \quad A_n '=A_n \times \{ e_n \},\]
then $ \mathbb{T}_{\infty,\tau} (z , i) \geq \tilde{\tau}_i (\tilde{A}) > 0$.
This implies (i).
The rest of claims  easily follows from  (i) and Lemma \ref{lem-M} (with $U_{j}=U_{\infty ,j}$ for all $j\in V$).
\end{proof}

If $\# V =1$ (when the system is i.i.d.), then either ${T}_{\infty, \tau} \equiv 1$ or there exists  $z_{0} \in \mathbb{C}$ such that ${T}_{\infty, \tau}(z_{0})=0$ by Lemma \ref{lem-filledEquiv}.
However, this is not the case when $\# V > 1$ as the following Proposition \ref{prop-mrkv} shows.
This fact illustrates the difference between i.i.d. random dynamical systems and Markov case.
In other words, we found a new phenomenon which cannot hold in i.i.d. case.
For a concrete example of this phenomenon, see Example \ref{ex-mrkvT} .

\begin{prop}\label{prop-mrkv}
Suppose  ${\tau}$ is irreducible. 
If there exist $i,\,j \in V$ such that  $K_{i}(S_{\tau}) \neq \emptyset$ and $K_{i}(S_{\tau}) \cap K_{j}(S_{\tau})  = \emptyset $,   then ${T}_{\infty, \tau} \not \equiv 1$ and $T_{\infty, \tau}(z) >0$ for all $z \in \rs.$ 
\end{prop}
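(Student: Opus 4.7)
The plan is to reduce both conclusions to Lemma \ref{lem-filledEquiv}(i), which identifies $K_{i}(S_{\tau})$ with the zero set of $\mathbb{T}_{\infty,\tau}(\cdot,i)$, combined with the fact (Lemma \ref{irred_lemma}(ii)) that irreducibility of $\tau$ guarantees that every component $p_k$ of the stationary vector is strictly positive. Once these two ingredients are in hand, everything follows directly from the convex-combination identity $T_{\infty,\tau}(z)=\sum_{k\in V}p_k\,\mathbb{T}_{\infty,\tau}(z,k)$.

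For the first conclusion $T_{\infty,\tau}\not\equiv 1$, I would pick any $z_0\in K_{i}(S_{\tau})$, which exists by hypothesis. Lemma \ref{lem-filledEquiv}(i) gives $\mathbb{T}_{\infty,\tau}(z_0,i)=0$. Since $\mathbb{T}_{\infty,\tau}(z_0,k)\le 1$ for every $k\in V$ and $\sum_{k\in V}p_k=1$, I would estimate
\[
T_{\infty,\tau}(z_0)=\sum_{k\in V}p_k\,\mathbb{T}_{\infty,\tau}(z_0,k)\le \sum_{k\neq i}p_k=1-p_i<1,
\]
where the strict inequality uses $p_i>0$ from irreducibility.

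For the second conclusion $T_{\infty,\tau}(z)>0$ for every $z\in\rs$, fix $z\in\rs$. The disjointness hypothesis $K_{i}(S_{\tau})\cap K_{j}(S_{\tau})=\emptyset$ forces $z\notin K_{i}(S_{\tau})$ or $z\notin K_{j}(S_{\tau})$. Lemma \ref{lem-filledEquiv}(i) then gives $\mathbb{T}_{\infty,\tau}(z,i)>0$ in the first case and $\mathbb{T}_{\infty,\tau}(z,j)>0$ in the second. Combining this with $p_i,p_j>0$, I would conclude
\[
T_{\infty,\tau}(z)\ge p_i\,\mathbb{T}_{\infty,\tau}(z,i)+p_j\,\mathbb{T}_{\infty,\tau}(z,j)>0
\]
in either case.

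Honestly, there is no substantial obstacle here: all of the content is already encapsulated in Lemma \ref{lem-filledEquiv}(i) and Lemma \ref{irred_lemma}. The only mild subtlety is to remember that the hypothesis $K_i\cap K_j=\emptyset$ automatically forces $i\ne j$ (since $K_i\ne\emptyset$), but this observation is not strictly needed because the argument above works as soon as one of the two vertices furnishes a positive contribution to the weighted sum.
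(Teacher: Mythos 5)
Your proof is correct and follows essentially the same route as the paper's: both arguments reduce to the identity $K_{k}(S_{\tau})=\{z:\mathbb{T}_{\infty,\tau}(z,k)=0\}$ together with strict positivity of the stationary vector, splitting into the cases $z\in K_{i}(S_{\tau})$ and $z\notin K_{i}(S_{\tau})$. There is nothing to add.
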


\begin{proof}
If $z\not\in K_{i}(S_{\tau })$, then $T_{\infty,\tau }(z)\geq p_{i}\Bbb{T}_{\infty ,\tau} (z,i)>0$ since $p_{i}>0.$ 
If $z\in K_{i}(S_{\tau })$, then $T_{\infty ,\tau }(z) \leq   \sum_{j \neq i}p_{j} < 1.$ 
Also, we have $z \not\in K_{j}(S_{\tau })$, and it follows that $T_{\infty ,\tau }(z)\geq p_{j}\Bbb{T}_{\infty ,\tau }(z,j)>0$ since $p_{j}>0$ 
\end{proof}

The following proposition claims that  $\mathbb{T}_{\infty , \tau}$ is continuous on $\mathbb{Y}$ if $\mathbb{J}_{{\rm {\rm ker}}} (S_\tau) = \emptyset $.
Combining this proposition with Corollary \ref{cor-kernelJuliaCond} or Lemma \ref{sepcond_lemma}, we obtain some examples of $\tau$ satisfying that  $\mathbb{T}_{\infty , \tau}$ is continuous.

\begin{prop}\label{prop-tInfty}
Let  $\phi \in {\rm C}(\mathbb{Y})$ be a continuous function with $\phi(\infty ,i)=1$ and $ \| \phi \|_{\mathbb{Y}} =1$.
Suppose that the support of $\phi(\cdot ,i)$ is contained in the connected component of $F_i(S_\tau)$ which contains $\infty$ for all $i \in V$.
Then the following statements hold.
\begin{enumerate}
\item The sequence $\{ M_\tau^n \phi \}_{n \in \nn}$ converges pointwise to $ \mathbb{T}_{\infty, \tau} $ on $\mathbb{Y}$ as $n \to \infty$.
\item The equation $M_\tau \mathbb{T}_{\infty, \tau}= \mathbb{T}_{\infty, \tau}$ holds.
\item If $\mathbb{J}_{{\rm {\rm ker}}} (S_\tau) = \emptyset $, then  $\{ M_{\tau }^{n} \phi \} _{n\in
\Bbb{N}}$ converges uniformly to $\Bbb{T}_{\infty ,\tau }$ on $\Bbb{Y}$ as $n\rightarrow
\infty $ and $\Bbb{T}_{\infty ,\tau }$ is continuous on $\Bbb{Y}.$ If, in addition to the assumption above,  ${\tau}$ is irreducible, then $T_{\infty ,\tau }$ is continuous on $\rs$.
\end{enumerate}
\end{prop}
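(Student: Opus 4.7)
By Lemma \ref{transop_lemma},
\[
M_\tau^n \phi(z,i) = \int_{X_i(S_\tau)} \phi(\gamma_{n,1}(z), t(e_n)) \, {\rm d} \tilde{\tau}_i(\xi),
\qquad \xi = (\gamma_n,e_n)_{n\in\nn}.
\]
Let $U_\infty^j$ denote the connected component of $F_j(S_\tau)$ containing $\infty$; since every polynomial fixes $\infty$, the family $(U_\infty^j)_{j\in V}$ is forward $S_\tau$-invariant (Lemma \ref{inv_lemma}). Set $A:=\{\xi : \gamma_{n,1}(z)\to \infty\}$, so that $\mathbb{T}_{\infty,\tau}(z,i)=\tilde{\tau}_i(A)$. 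The plan is to show that $\phi(\gamma_{n,1}(z), t(e_n)) \to \mathbf{1}_{A}(\xi)$ for $\tilde{\tau}_i$-a.e.\ $\xi$ and then invoke the bounded convergence theorem, using $|\phi|\le 1$. On $A$, continuity of $\phi$ at each $(\infty,j)$ with value $1$, together with finiteness of $V$, gives $\phi(\gamma_{n,1}(z), t(e_n)) \to 1$.

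For the complement, I would prove that $A^c$ coincides with the set
\[ E := \{\xi : \gamma_{n,1}(z) \in \rs \setminus U_\infty^{t(e_n)} \text{ for all } n \in \nn\} \]
that appears in Lemma \ref{lem-M} applied with $U_j:=U_\infty^j$. The inclusion $E\subset A^c$ is immediate from Lemma \ref{lem-attract}. For the converse, if the orbit of $z$ enters $U_\infty^{t(e_N)}$ at some $N$, then Montel's normal-family theorem applied to the holomorphic family $\{\gamma_{n,N+1}\}_{n>N}$ on the connected open set $U_\infty^{t(e_N)}$, combined with the uniform escape on a fixed neighborhood of $\infty$ provided by Lemma \ref{lem-attract} and the identity principle, forces every subsequential limit to be identically $\infty$; hence $\gamma_{n,1}(z)\to\infty$, i.e.\ $\xi\in A$. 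Lemma \ref{lem-M} then yields $d(\gamma_{n,1}(z), L_{\ker, t(e_n)}) \to 0$ for $\tilde{\tau}_i$-a.e.\ $\xi\in E$. Forward invariance of $(U_\infty^k)$ gives $L_{\ker,j}\subset \rs\setminus U_\infty^j$, so $\phi(\cdot,j)$ vanishes on $L_{\ker,j}$; uniform continuity of $\phi$ on the compact space $\mathbb{Y}$ then implies $\phi(\gamma_{n,1}(z), t(e_n)) \to 0$, completing the a.s.\ convergence.

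\textbf{Parts (ii) and (iii).} For (ii), I would write $M_\tau^{n+1}\phi = M_\tau(M_\tau^n\phi)$; dominated convergence inside the integral defining $M_\tau$ (Definition \ref{def-Mtau}), with the uniform bound $|M_\tau^n\phi|\le 1$ and the pointwise convergence from (i), gives $M_\tau(M_\tau^n\phi) \to M_\tau\mathbb{T}_{\infty,\tau}$ pointwise, while the left-hand side tends to $\mathbb{T}_{\infty,\tau}$, so $M_\tau \mathbb{T}_{\infty,\tau} = \mathbb{T}_{\infty,\tau}$. For (iii), the hypothesis $\mathbb{J}_{\ker}(S_\tau)=\emptyset$ triggers Proposition \ref{pathwisenull_prop}, giving $F_{{\rm meas}}(M_\tau^*)=\mathfrak{M}_1(\mathbb{Y})$; Lemma \ref{FMiff_lemma} upgrades this to $F_{{\rm pt}}^0(M_\tau^*)=\mathbb{Y}$, and Lemma \ref{fpt0_lemma} combined with compactness of $\mathbb{Y}$ yields equicontinuity of $\{M_\tau^n\phi\}_n$ on $\mathbb{Y}$. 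By Arzel\`a--Ascoli, this family is relatively compact in ${\rm C}(\mathbb{Y})$, and (i) identifies $\mathbb{T}_{\infty,\tau}$ as the unique possible uniform subsequential limit; thus $M_\tau^n\phi\to\mathbb{T}_{\infty,\tau}$ uniformly on $\mathbb{Y}$, whence $\mathbb{T}_{\infty,\tau}\in {\rm C}(\mathbb{Y})$, and in the irreducible case $T_{\infty,\tau}=\sum_i p_i\mathbb{T}_{\infty,\tau}(\cdot,i)$ inherits continuity (using Lemma \ref{irred_lemma}).

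\textbf{Main obstacle.} The delicate step is the dichotomy $A^c=E$, i.e.\ showing that once the orbit visits the Fatou component $U_\infty^{t(e_N)}$ it necessarily escapes to $\infty$. This requires combining a normal-family argument on what can be a large Fatou component with the uniform escape near $\infty$ from Lemma \ref{lem-attract} via the identity principle, to rule out non-constant subsequential limits. Everything else reduces to bookkeeping around the lemmas cited above and the dominated/bounded convergence theorems.
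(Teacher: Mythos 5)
Your proof is correct and follows essentially the same route as the paper's: pointwise convergence via the dominated convergence theorem after establishing the dichotomy that a random orbit escapes to $\infty$ if and only if it ever enters the Fatou component containing $\infty$ (which the paper merely asserts as a ``Note'' and you rightly flag and prove via normality plus Lemma \ref{lem-attract} and the identity principle), and then uniform convergence in (iii) via Proposition \ref{pathwisenull_prop}, Lemma \ref{FMiff_lemma}, Lemma \ref{fpt0_lemma} and the Arzel\'a-Ascoli theorem. The only (harmless) detour is your appeal to Lemma \ref{lem-M} on $A^{c}$: once $A^{c}=E$ is established, the support hypothesis already gives $\phi(\xi_{n,1}(z,i))=0$ for every $n$ there, which is how the paper argues.
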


\begin{proof}
Note that all sequences $\xi = (\gamma_n , e_n )_{n \in \nn} \in X_i(S_\tau)$ converge to $\infty$ locally uniformly on the connected component of $F_i(S_\tau)$ which contains $\infty$ for all $i \in V$.
\begin{enumerate}
\item Fix any point $z$  of $\rs$ and any $\xi = (\gamma_n , e_n )_{n \in \nn} \in X_i(S_\tau)$.
If $z \in A_\xi$, then $\phi( \xi_{n,1}(z, i)) \to 1$; othewise  $\phi( \xi_{n,1}(z, i)) = 0$ for all $n \in \nn$.
Thus, combining Lemma \ref{transop_lemma}  with  the dominated convergence theorem, we have
\begin{align*}
\lim_{n \to \infty} (M_{\tau} ^n \phi) (z,i) 
&= \lim_{n \to \infty}  \int_{X_i(S_\tau)} \phi (\xi_{n,1} (z,i)) \,  {\rm d} \tilde{\tau}_{i} (\xi)  \\
&= \int_{X_i (S_\tau)} \1_{ \{ \eta ; z \in A_\eta \} } (\xi) \,  {\rm d} \tilde{\tau}_{i} (\xi) =\mathbb{T}_{\infty ,\tau}(z,i).
\end{align*}
\item It follows immediately from (i).
\item If $\mathbb{J}_{{\rm {\rm ker}}}(S_\tau) = \emptyset $, then the sequence $\{ M_\tau^n \phi \}_{n \in \nn}$ is equicontinuous on $\mathbb{Y}$ by Proposition \ref{pathwisenull_prop}, Lemma \ref{FMiff_lemma} and Lemma \ref{fpt0_lemma}.
Moreover,  $\{ M_\tau^{n} \phi \}$ is uniformly bounded since $\| M_\tau^{n} \phi \|_{\mathbb{Y}} \leq \| \phi \|_{\mathbb{Y}} =1$.
By the Arzel\'a-Ascoli theorem, it follows that any subsequence of   $\{ M_\tau^{n} \phi \}$ has a subsequence which converges uniformly to  $\mathbb{T}_{\infty ,\tau}$ on $\mathbb{Y}$.
Therefore $\{ M_{\tau }^{n}\phi \} _{n\in \nn }$ converges  uniformly to $\Bbb{T}_{\infty ,\tau }$ on $\Bbb{Y}$ and  the limit  $\mathbb{T}_{\infty ,\tau}$ is continuous on $ \mathbb{Y}$.
\end{enumerate}
\end{proof}

The following example illustrates a new phenomenon which 
cannot hold in i.i.d.\ systems. For this example, we can apply
Proposition \ref{prop-mrkv} and Proposition \ref{prop-tInfty}.

\begin{example}\label{ex-mrkvT}
Let  $g_1(z) =z^2 -1,g_2(z)=z^2/4$ and set 
\begin{align*}
&f_{1}(z) = g_{1}\circ g_{1}(z-5) + 5,\, f_{2}(z) = g_{2}\circ g_{2}(z-5) +5, \\
&f_{3}(z) = g_{2}\circ g_{2}(z+5) -5,\, f_{4}(z) = g_{1}\circ g_{1}(z+5) - 5, \\
&h_{1}(z) = f_{1}(z+10),\, h_{2}(z) = f_{3}(z-10).
\end{align*}

We consider the polynomial MRDS  induced by 
$$\tau = \left(
    \begin{array}{cccc}
      \tau_{11} & \tau_{12} & \tau_{13} & \tau_{14} \\
      \tau_{21} & \tau_{22} & \tau_{23} & \tau_{24} \\
      \tau_{31} & \tau_{32} & \tau_{33} & \tau_{34} \\
      \tau_{41} & \tau_{42} & \tau_{43} & \tau_{44} \\
          \end{array}
  \right) = 
 \left(
    \begin{array}{cccc}
      \frac{1}{2} \delta_{f_{1}}&  \frac{1}{2}\delta_{f_{1}} &                                          & \\
      \frac{1}{4} \delta_{f_{2}} &  \frac{1}{4} \delta_{f_{2}} & \frac{1}{2}\delta_{h_{2}} & \\  
                                                &                                    &  \frac{1}{2} \delta_{f_{3}}&  \frac{1}{2}\delta_{f_{3}}  \\
       \frac{1}{2}\delta_{h_{1}} &                                        & \frac{1}{4} \delta_{f_{4}} &  \frac{1}{4} \delta_{f_{4}}   \\  
    \end{array}       \right).$$

See Figure \ref{fig-mrkvGDMS}. 
This system satisfies  the assumptions of Proposition \ref{prop-mrkv} and of Proposition \ref{prop-tInfty} (iii): 
$K_{1}(S_{\tau}) \neq \emptyset$ and $K_{1}(S_{\tau}) \cap K_{3}(S_{\tau})  = \emptyset $; 
$\mathbb{J}_{{\rm {\rm ker}}}(S_\tau) = \emptyset $ and $\tau$ is irreducible. 
Therefore, it follows that $T_{\infty, \tau} \not \equiv 1$,  $T_{\infty, \tau}(z) >0$ for all $z \in \rs$ and $T_{\infty, \tau}$ is continuous on $\rs.$ 
Figure \ref{fig-mrkvT} illustrates the  function $1 - T_{\infty, \tau}$ which represents the probability of not tending to infinity.

Moreover, the system in this  example is postcritically bounded; i.e. the set 
$$\overline{\bigcup_{h \in H(S_{\tau}) } \{ c \in \mathbb{C} ; c \text{ is a critical value of } h \}}\setminus \{\infty\}$$
is bounded in $\mathbb{C}.$
If the i.i.d. random dynamical system is  postcritically bounded,  then the connected components of Julia set ``surround'' one another and $T_{\infty, \tau}$ has the monotonicity with the surrounding order (see \cite[Theorem 2.4]{sumi15}). 
However, as this example illustrates, the ``surrounding order'' 
is not totally ordered regarding the set of connected components of
the Julia set of $S_{\tau }$ and $T_{\infty ,\tau }$ does not have the
monotonicity in a general non-i.i.d. irreducible system, even if the system is postcritically bounded.

    \begin{figure}[htbt]
    \begin{center}
\includegraphics[width=5cm]{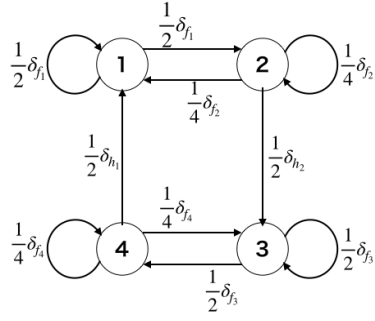}
\end{center}
 \caption{The schematic GDMS of Example \ref{ex-mrkvT}.}
\label{fig-mrkvGDMS}
\end{figure}

    \begin{figure}[htbt]
\begin{center}
\includegraphics[width=8cm]{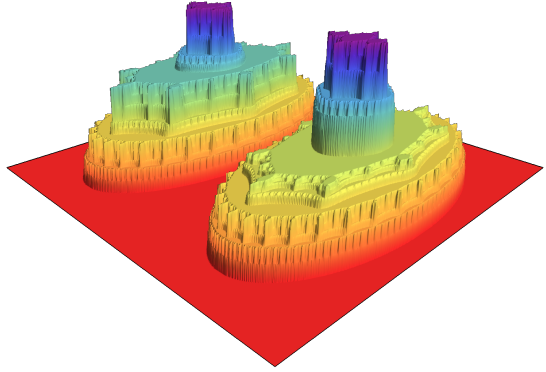}
\end{center}
 \caption{The graph of the function $1 - T_{\infty, \tau}$ with $0< T_{\infty, \tau}  \not \equiv 1$, which cannot hold in i.i.d. random dynamical systems of polynomials.}
\label{fig-mrkvT}
\end{figure}
\end{example}

\begin{cor}\label{cor-addNoise}
Suppose that the  polynomial GDMS $S_\tau$ satisfies the assumption (i) or (ii) of Corollary \ref{cor-kernelJuliaCond}.
Then $\Bbb{T}_{\infty ,\tau }$ is continuous on $\mathbb{Y}$ and $T_{\infty, \tau}$ is continuous on $\rs$.
\end{cor}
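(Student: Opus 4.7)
The plan is to reduce this directly to the machinery already developed. By hypothesis $S_\tau$ is an irreducible polynomial GDMS with each $\Gamma_e$ compact in $\poly$, and satisfies one of the two conditions in Corollary \ref{cor-kernelJuliaCond}. Applying that corollary immediately yields $J_{\ker,i}(S_\tau)=\emptyset$ for every $i\in V$, hence $\mathbb{J}_{\ker}(S_\tau)=\emptyset$. This is exactly the hypothesis needed to invoke Proposition \ref{prop-tInfty}(iii), so the strategy is to construct a suitable test function $\phi$ and then quote that proposition.

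To build the test function, I would first use Lemma \ref{lem-attract}: since $\Gamma:=\bigcup_{e\in E}\Gamma_e$ is a compact subset of $\poly$, there is an open neighborhood $U$ of $\infty$ on which every admissible composition converges locally uniformly to $\infty$. In particular, for each $i\in V$, the connected component $U_{\infty,i}$ of $F_i(S_\tau)$ containing $\infty$ contains $U$. Choose a single continuous function $\psi\colon\rs\to[0,1]$ supported in $U$ with $\psi(\infty)=1$ and $\|\psi\|_{\rs}=1$ (e.g.\ a bump function with respect to the spherical metric), and define $\phi\in\mathrm{C}(\mathbb{Y})$ by $\phi(z,i):=\psi(z)$. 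Then $\phi(\infty,i)=1$, $\|\phi\|_{\mathbb{Y}}=1$, and $\supp\phi(\cdot,i)\subset U\subset U_{\infty,i}$ for each $i\in V$, so $\phi$ meets the hypotheses of Proposition \ref{prop-tInfty}.

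With $\phi$ in hand and $\mathbb{J}_{\ker}(S_\tau)=\emptyset$, Proposition \ref{prop-tInfty}(iii) gives that $\{M_\tau^n\phi\}_{n\in\nn}$ converges uniformly to $\mathbb{T}_{\infty,\tau}$ on $\mathbb{Y}$ and that $\mathbb{T}_{\infty,\tau}$ is continuous on $\mathbb{Y}$. Since $S_\tau$ is irreducible by the assumption inherited from Corollary \ref{cor-kernelJuliaCond}, the same proposition asserts that the weighted sum $T_{\infty,\tau}(z)=\sum_{i\in V}p_i\,\mathbb{T}_{\infty,\tau}(z,i)$ is continuous on $\rs$. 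This completes the argument.

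There is essentially no obstacle here beyond verifying the formal applicability of the two results being combined; the only mild technical point is ensuring a uniform neighborhood of $\infty$ lies inside every $U_{\infty,i}$, which Lemma \ref{lem-attract} delivers by compactness of $\Gamma$. Otherwise the corollary is a direct two-step chain: Corollary \ref{cor-kernelJuliaCond} $\Rightarrow$ empty kernel Julia sets $\Rightarrow$ Proposition \ref{prop-tInfty}(iii) $\Rightarrow$ continuity of $\mathbb{T}_{\infty,\tau}$ and $T_{\infty,\tau}$.
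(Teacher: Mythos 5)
Your proposal is correct and follows exactly the route the paper intends: Corollary \ref{cor-kernelJuliaCond} gives $\mathbb{J}_{\ker}(S_\tau)=\emptyset$, and Proposition \ref{prop-tInfty}(iii) (applied to a bump function at $\infty$, which exists by Lemma \ref{lem-attract}) then yields continuity of $\mathbb{T}_{\infty,\tau}$ on $\mathbb{Y}$ and of $T_{\infty,\tau}$ on $\rs$ by irreducibility. The paper leaves this chain implicit, and your explicit construction of the test function $\phi$ is a valid filling-in of the only detail it omits.
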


\begin{cor}\label{cor-conti}
Suppose that the polynomial GDMS $S_\tau$ satisfies the assumption of Lemma \ref{sepcond_lemma}.
Then $\Bbb{T}_{\infty ,\tau }$ is continuous on $\mathbb{Y}$.
Moreover, if $S_\tau$ is irreducible in addition to the assumption above, then $T_{\infty, \tau}$ is continuous on $\rs$.
\end{cor}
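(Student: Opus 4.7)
The plan is a direct two-step reduction to earlier results. First, I would apply Lemma \ref{sepcond_lemma} to $S_\tau$, whose hypotheses—the backward separating condition and essential non-determinism—are precisely those stated in the corollary. This yields $J_{\ker, j}(S_\tau) = \emptyset$ for some $j \in V$; under the additional irreducibility hypothesis of the second claim, its second half sharpens this to $\mathbb{J}_{\ker}(S_\tau) = \emptyset$.

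Second, I would feed this vanishing of the kernel Julia set into Proposition \ref{prop-tInfty}(iii). Two prerequisites are straightforward to check for polynomial GDMSs: the open-map hypothesis of the underlying Proposition \ref{pathwisenull_prop} is automatic since $\poly \subset \ocm(\rs)$, and a suitable test function $\phi \in \mathrm{C}(\mathbb{Y})$ with $\phi(\infty, i) = 1 = \|\phi\|_{\mathbb{Y}}$ and $\supp \phi(\cdot, i)$ inside the component of $F_i(S_\tau)$ containing $\infty$ is easily constructed by a bump argument using Corollary \ref{cor-cpt}, which puts each $J_i(S_\tau)$ inside $\mathbb{C}$ so that $\infty \in F_i(S_\tau)$ for every $i$. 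Proposition \ref{prop-tInfty}(iii) then delivers continuity of $\mathbb{T}_{\infty, \tau}$ on $\mathbb{Y}$. For the irreducible case, continuity of $T_{\infty, \tau}(z) = \sum_{i=1}^{m} p_i \mathbb{T}_{\infty, \tau}(z, i)$ on $\rs$ follows at once, since $p$ is the positive stationary vector from Lemma \ref{irred_lemma}(ii) and the sum is a finite linear combination of continuous functions.

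The main obstacle lies in the first claim in the absence of irreducibility: Lemma \ref{sepcond_lemma} alone supplies $J_{\ker, j_0}(S_\tau) = \emptyset$ only at a single vertex $j_0$, while Proposition \ref{prop-tInfty}(iii) as stated demands that the entire collective $\mathbb{J}_{\ker}(S_\tau)$ vanish. I would bridge this gap by invoking the strengthened formulation announced as Main Result \ref{res_Tconti}, or equivalently by refining the equicontinuity argument behind Proposition \ref{prop-tInfty}(iii): apply Lemma \ref{lem-M} with the forward $S_\tau$-invariant family $U_j := F_j(S_\tau)$ (so that $L_{\ker, j}$ in that lemma coincides with $J_{\ker, j}(S_\tau)$), and combine emptiness of $J_{\ker, j_0}$ with Lemma \ref{measure0_lemma} and Lemma \ref{fpt0_lemma} to recover equicontinuity of $\{M_\tau^n \phi\}$ on all of $\mathbb{Y}$; passing to the pointwise limit of part (i) of Proposition \ref{prop-tInfty} via Arzelà--Ascoli then gives the required continuity of $\mathbb{T}_{\infty, \tau}$.
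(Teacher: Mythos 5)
Your core reduction is exactly the one the paper intends: Corollary \ref{cor-conti} is stated without a separate proof precisely because it is meant to be the combination of Lemma \ref{sepcond_lemma} with Proposition \ref{prop-tInfty}(iii), and your verification of the side hypotheses (polynomial maps are open, and a test function $\phi$ exists because each $J_i(S_\tau)$ is a compact subset of $\mathbb{C}$ by Corollary \ref{cor-cpt}, so $\infty \in F_i(S_\tau)$ for every $i$) is correct. Under irreducibility the argument closes completely: the second half of Lemma \ref{sepcond_lemma} gives $\mathbb{J}_{\ker}(S_\tau)=\emptyset$, Proposition \ref{prop-tInfty}(iii) then gives continuity of $\mathbb{T}_{\infty,\tau}$ on $\mathbb{Y}$, and $T_{\infty,\tau}=\sum_{i}p_i\,\mathbb{T}_{\infty,\tau}(\cdot,i)$ is continuous on $\rs$ as a finite combination of continuous functions.

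The genuine problem is your ``bridge'' for the first claim in the absence of irreducibility: it does not work, and in fact no argument can close that gap because the single-vertex conclusion of Lemma \ref{sepcond_lemma} is strictly weaker than what is needed. Lemma \ref{lem-M} with $U_j=F_j(S_\tau)$ only forces $\tilde{\tau}_{i}(E)=0$ when almost every trajectory in $E$ visits the good vertex $j_0$ (the one with $J_{\ker,j_0}(S_\tau)=\emptyset$) infinitely often; the conclusion $d(\gamma_{n,1}(y),J_{\ker,t(e_n)}(S_\tau))\to 0$ is contradicted only along times $n$ with $t(e_n)=j_0$, so it says nothing about trajectories that never reach $j_0$. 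Concretely, take $V=\{1,2\}$, $\tau_{22}=\delta_{f}$ for a single $f\in\poly$ (vertex $2$ absorbing and deterministic), and give vertex $1$ the two outgoing edges $(1,1)$ and $(1,2)$ carrying maps $g$ and $h$ chosen so that $g^{-1}(J_1(S_\tau))\cap h^{-1}(J_2(S_\tau))=\emptyset$ (e.g.\ $f(z)=g(z)=z^{2}$ and $h(z)=(z-100)^{2}$). This system is essentially non-deterministic and satisfies the backward separating condition, and Lemma \ref{sepcond_lemma} gives $J_{\ker,1}(S_\tau)=\emptyset$; nevertheless $J_{\ker,2}(S_\tau)=\bigcap_{n}f^{-n}(J(f))=J(f)\neq\emptyset$ and $\mathbb{T}_{\infty,\tau}(\cdot,2)=\1_{\rs\setminus K(f)}$ is discontinuous on $J(f)$. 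So the first assertion genuinely requires irreducibility (or at least $\mathbb{J}_{\ker}(S_\tau)=\emptyset$ at every vertex, not merely at one); note also that Main Result \ref{res_Tconti}, which you hoped to invoke as a strengthened formulation, is itself stated under the introduction's standing irreducibility hypothesis, and the only place the paper actually uses Corollary \ref{cor-conti} (the proof of Theorem \ref{theorem-Tinfty}) assumes irreducibility. The correct fix is to carry the irreducibility hypothesis into the first claim (or to assume $\mathbb{J}_{\ker}(S_\tau)=\emptyset$ directly), not to weaken the hypothesis of Proposition \ref{prop-tInfty}(iii).
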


Corollary \ref{cor-addNoise} can be paraphrased by saying that  $\Bbb{T}_{\infty ,\tau }$ is continuous on $\mathbb{Y}$ if some $\Gamma_e= \supp \tau_{e}$ contains  sufficiently many polynomials.
In contrast, the backward separating condition (one of the assumptions of  Lemma \ref{sepcond_lemma}) seems to be familiar to the GDMS with less polynomials.
We focus on the latter case and show more sophisticated results.
We now begin with the following easy lemma.

\begin{lemma}\label{cor-NoException}
Suppose that  $\tau$ is irreducible and satisfies the backward separating condition.
Then $J_i (S_\tau) \cap \mathcal{E}_i (S_\tau) = \emptyset$ for all  $i \in V$. 
\end{lemma}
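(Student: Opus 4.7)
The plan is to argue by contradiction. Suppose there exists $z\in J_i(S_\tau)\cap \mathcal{E}_i(S_\tau)$. Because every element of $\Gamma_e$ is a polynomial of degree $\ge 2$, the point $\infty$ satisfies $(H_i^i(S_\tau))^{-1}(\infty)=\{\infty\}$, so $\infty \in \mathcal{E}_i(S_\tau)$; meanwhile $J_i(S_\tau)\subset \mathbb{C}$ by Corollary \ref{cor-cpt}, so $z\ne \infty$. Since any composition in $H_i^i(S_\tau)$ (nonempty by irreducibility) is a polynomial of degree $\ge 2$ whose infinite Julia set is contained in $J_i(S_\tau)$, the system $S_\tau$ is non-elementary; Lemma \ref{exceptional_lemma} then forces $\mathcal{E}_i(S_\tau)=\{\infty,z\}$.

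The first key step is to show that $\mathcal{E}_i(S_\tau)$ is backward $H_i^i(S_\tau)$-invariant: if $g\in H_i^i(S_\tau)$ and $g(y)=w\in\mathcal{E}_i(S_\tau)$, then the inclusion $(H_i^i(S_\tau))^{-1}(y)\subset(H_i^i(S_\tau))^{-1}(w)$, obtained by composing preimages of $y$ with $g$, shows $y\in\mathcal{E}_i(S_\tau)$. Applying this with $w=z$ gives $g^{-1}(z)\subset \{\infty ,z\}$. Since $g$ is a polynomial, $g(\infty)=\infty\ne z$, so $g^{-1}(z)=\{z\}$ set-theoretically. Together with the totally ramified preimage $g^{-1}(\infty)=\{\infty\}$, the map $g$ has two totally ramified points in $\rs$, so after conjugation by the translation $w\mapsto w-z$ it becomes a power map; concretely,
\[
g(w)=c_g(w-z)^{d_g}+z \qquad \text{for some }c_g\in\mathbb{C}\setminus\{0\},\ d_g\ge 2.
\]

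Next I would combine Proposition \ref{selfsim_prop} with the backward separating condition to pin down the forward orbit of $z$ in the Julia sets: inductively there is a unique pair $(e_n,f_n)$ with $i(e_n)$ equal to the current vertex, $f_n\in \Gamma_{e_n}$, and $z_n:=f_n\circ \cdots \circ f_1(z)\in J_{t(e_n)}(S_\tau)$. The crucial claim is that every admissible cycle $g=h_N\circ \cdots \circ h_1\in H_i^i(S_\tau)$ coincides with this forced trajectory stepwise, i.e.\ $(e^{(\ell)},h_\ell)=(e_\ell,f_\ell)$ for $\ell=1,\dots,N$. I would prove this by induction on $\ell$: if the first deviation occurs at step $\ell$, then by the induction hypothesis $h_{\ell-1}\circ\cdots\circ h_1(z)=z_{\ell-1}\in J_{i(e^{(\ell)})}(S_\tau)$, and backward separating at the vertex $i(e^{(\ell)})$ forces $h_\ell(z_{\ell-1})\in F_{t(e^{(\ell)})}(S_\tau)$; forward invariance of the Fatou family (Lemma \ref{inv_lemma}) then propagates this through the remaining composition to yield $g(z)\in F_i(S_\tau)$, contradicting $g(z)=z\in J_i(S_\tau)$. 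By irreducibility the forced trajectory must return to $i$ at some minimal step $k$, and the stepwise claim then gives $H_i^i(S_\tau)=\{\phi^n:n\ge 1\}$ with $\phi:=f_k\circ \cdots \circ f_1$. Hence by Lemma \ref{sysJulia=semiJulia_lemma}, $J_i(S_\tau)=J(\phi)$; writing $\phi(w)=c_\phi(w-z)^{d_\phi}+z$ identifies $J(\phi)$ with the circle $\{w:|w-z|=|c_\phi|^{-1/(d_\phi-1)}\}$, which has positive radius and does not contain its center $z$. This contradicts $z\in J_i(S_\tau)$ and completes the proof.

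The main obstacle will be the stepwise rigidity claim in the third paragraph: that every cycle in $H_i^i(S_\tau)$ is forced to follow the orbit of $z$ exactly. This is where the backward separating condition enters essentially, through the interplay of Proposition \ref{selfsim_prop} (existence of a forced pair at each step), backward separating (uniqueness of that pair), and forward invariance of the Fatou family (which converts any deviation into a visible contradiction with $z\in J_i(S_\tau)$).
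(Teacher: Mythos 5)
Your argument is correct, but it follows a genuinely different route from the paper. The paper's proof splits into two cases according to whether $S_\tau$ is essentially non-deterministic. In the non-deterministic case it runs a short counting argument: picking two distinct pairs $(e_1,f_1)\neq(e_2,f_2)$ with a common initial vertex $j$ and completing each to elements $h_1,h_2\in H_j^j(S_\tau)$, the backward separating condition makes $h_1^{-1}(J_j(S_\tau))$ and $h_2^{-1}(J_j(S_\tau))$ disjoint, while the finite set $A=J_j(S_\tau)\cap\mathcal{E}_j(S_\tau)$ (of cardinality at most $2$ by Lemma \ref{exceptional_lemma}) satisfies $h_n^{-1}(A)\subset A$ with $\#h_n^{-1}(A)\geq \#A$; disjointness then forces $A=\emptyset$. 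In the remaining (deterministic) case $H_i^i(S_\tau)$ is cyclic and the paper simply cites the classical fact $J(h)\cap\mathcal{E}(h)=\emptyset$ from \cite{miln06}. Your proof instead is a single unified contradiction: from $z\in J_i(S_\tau)\cap\mathcal{E}_i(S_\tau)$ you deduce via backward invariance of $\mathcal{E}_i(S_\tau)$ that every $g\in H_i^i(S_\tau)$ satisfies $g^{-1}(z)=\{z\}$, then use Proposition \ref{selfsim_prop}, the backward separating condition, and forward invariance of the Fatou family (Lemma \ref{inv_lemma}) to show that any element of $H_i^i(S_\tau)$ deviating from the forced backward-orbit trajectory of $z$ would send $z$ into $F_i(S_\tau)$, so that $H_i^i(S_\tau)$ collapses to the iterates of a single power map $c(w-z)^{d}+z$ whose Julia set is a round circle omitting $z$, contradicting Lemma \ref{sysJulia=semiJulia_lemma}. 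I checked the delicate points — the identity $g(z)=z$ for all $g\in H_i^i(S_\tau)$ (which is what makes the deviation step a contradiction), the periodicity of the forced trajectory, and the divisibility of return times by the minimal period — and they all hold. Your approach is longer than the paper's Case 1 but avoids both the case split and the external citation, and it exposes extra structure: the only way a Julia point could be exceptional is for the whole system to degenerate to a single power map. The paper's argument is more economical; yours is more self-contained and explanatory.
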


\begin{proof}
We divide the proof into two cases.
\begin{enumerate}
\item[Case 1.] Suppose that $S_\tau$ is essentially non-deterministic.
Then there exist two edges $e_1, e_2 \in E$ with the same initial vertex $j \in V$ and two maps $f_1 \in \Gamma_{e_1}, f_2 \in \Gamma_{e_2}$ such that either  $e_1\neq e_2$ or  $f_1 \neq f_2$.
Fix any $g_n \in H_{t(e_n)}^j (S_\tau)$ and set $h_n:=g_n \circ f_n \in H_j^j (S_\tau)$  for each $n =1,2$.
Then it is easy to see that $h_1^{-1}(J_j (S_\tau)) \cap h_2^{-1}(J_j (S_\tau))  = \emptyset$.
Now we have $h_n^{-1}(J_j (S_\tau) \cap \mathcal{E}_j (S_\tau) ) \subset J_j (S_\tau) \cap \mathcal{E}_j (S_\tau) $ for each $n=1,2$ and $\# (J_j (S_\tau) \cap \mathcal{E}_j (S_\tau)) \leq 2$ by Lemma \ref{exceptional_lemma}.
Therefore, we can show $J_j (S_\tau) \cap \mathcal{E}_j (S_\tau) = \emptyset $, and hence $J_i (S_\tau) \cap \mathcal{E}_i (S_\tau) = \emptyset$ for all  $i \in V$ since $S_\tau$ is irreducible.  
\item[Case 2.] Suppose that $\# \bigcup_{i(e)=j} \Gamma_e =1$ for all $j \in V$.
In this case, it follows that $H_i^i(S_\tau)$ is a polynomial semigroup generated by a single map $h_i$.
Then the Julia set $J_i(S_\tau)$ is equal to the Julia set $J(h_i)$ of $h_i$;
the exceptional set  $\mathcal{E}_i(S_\tau)$  is equal to the exceptional set  $\mathcal{E}(h_i)$ of  $h_i$.
Here,  $J(h_i)$ and $\mathcal{E}(h_i)$ is defined for the iteration of $h_i$, which is classically well known.
By \cite[Lemma 4.9]{miln06}, we have $J_i(S_\tau) \cap \mathcal{E}_i (S_\tau) = \emptyset$.
\end{enumerate}
\end{proof}

The following theorem is one of the main theorems in this paper 
 and
gives a sufficient condition that the function $\Bbb{T}_{\infty ,\tau }$ which
represents the probability of tending to infinity varies precisely on the Julia
set $\Bbb{J}(S_{\tau })$ and the function $\Bbb{T}_{\infty ,\tau }$ is
continuous on the whole space.  

\begin{theorem}\label{theorem-Tinfty}
Suppose that $\tau $ is irreducible and the polynomial GDMS $S_{\tau }$ satisfies the backward separating  condition and satisfies that  $\# \Gamma_e < \infty$ for all $e \in E$.
If  $K_j (S_\tau) \neq \emptyset$ for some $j \in V$, then the Julia set $J_i(S_\tau)$ at $i$ is equal to  the set of all points where $\mathbb{T}_{\infty, \tau} (\cdot, i)$  is not locally constant for all  $i \in V$.
Moreover, if, in addition to the assumption above, $S_\tau$ is essentially non-deterministic, then $\Bbb{T}_{\infty ,\tau }$ is continuous on $\mathbb{Y}$ and $\Bbb{T}_{\infty ,\tau } (J_i (S_\tau)\times \{ i \}) =[0,1]$ for all  $i \in V$, and hence $T_{\infty, \tau}$ is continuous on $\rs$.
\end{theorem}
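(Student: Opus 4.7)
The plan is to split the argument by whether $S_{\tau }$ is essentially deterministic, dispose of the deterministic case directly, and handle the essentially non-deterministic case by an iterative ``unique-branch'' argument built on the backward separating condition. First I would gather the preliminary reductions. Since each $\Gamma _{e}$ is a compact subset of $\poly $, Lemma \ref{lem-attract} gives $\infty \in F_{i}(S_{\tau })$ for every $i$, so $J_{i}(S_{\tau })\neq \rs $ (cf.\ Corollary \ref{cor-cpt}); combined with Proposition \ref{prop-intEmpty} this yields $\mathrm{int}(J_{i}(S_{\tau }))=\emptyset $, whence $J_{i}(S_{\tau })=\partial F_{i}(S_{\tau })$. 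A direct check shows that $(K_{i}(S_{\tau }))_{i}$ is forward $S_{\tau }$-invariant, so by irreducibility and Lemma \ref{inv_lemma}(iii) the hypothesis $K_{j}(S_{\tau })\neq \emptyset $ upgrades to $K_{i}(S_{\tau })\neq \emptyset $ for every $i$; by Lemma \ref{lem-filledEquiv} there exists $z_{i}$ with $\mathbb{T}_{\infty ,\tau }(z_{i},i)=0$. The inclusion $\supseteq $ of the set identity is exactly Lemma \ref{locconstonF_lemma}. If $S_{\tau }$ is essentially deterministic (i.e.\ the ``moreover'' hypothesis fails), each vertex has a unique outgoing pair $(e,f)$, so the dynamics from $i$ is a single non-autonomous orbit $\xi ^{\ast }\in X_{i}(S_{\tau })$ with $H_{i}(S_{\tau })=\{\gamma ^{\ast }_{N,1}\}_{N}$; then $\mathbb{T}_{\infty ,\tau }(\cdot ,i)=\1 _{A_{\xi ^{\ast }}}$, whose non-local-constancy set is $\partial A_{\xi ^{\ast }}=J_{\xi ^{\ast }}=J_{i}(S_{\tau })$ by Lemma \ref{prop-JEquiv}, settling this sub-case.

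From here I assume $S_{\tau }$ is essentially non-deterministic. Lemma \ref{sepcond_lemma} together with irreducibility gives $\mathbb{J}_{\ker }(S_{\tau })=\emptyset $, and Proposition \ref{prop-tInfty}(iii) then delivers continuity of $\mathbb{T}_{\infty ,\tau }$ on $\mathbb{Y}$ (and, by positivity of the stationary vector, of $T_{\infty ,\tau }$ on $\rs $), proving the continuity clause of the ``moreover'' part. For the remaining inclusion $\subseteq $, suppose for contradiction that $z\in J_{i}(S_{\tau })$ and $\mathbb{T}_{\infty ,\tau }(\cdot ,i)\equiv c$ on an open neighborhood $U$ of $z$. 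Writing $M_{\tau }\mathbb{T}_{\infty ,\tau }=\mathbb{T}_{\infty ,\tau }$ (Proposition \ref{prop-tInfty}(ii)) in the discrete form
\[
  c=\sum _{(e,f):\, i(e)=i,\, f\in \Gamma _{e}}\tau _{e}(\{f\})\,\mathbb{T}_{\infty ,\tau }(f(w),t(e)),\qquad w\in U,
\]
and invoking Proposition \ref{selfsim_prop} together with the backward separating condition, I pick the unique pair $(e_{1},f_{1})$ with $f_{1}(z)\in J_{t(e_{1})}(S_{\tau })$. Every other branch sends $z$, and after shrinking $U$ every $w\in U$, into a Fatou component of $F_{t(e')}(S_{\tau })$ on which $\mathbb{T}_{\infty ,\tau }$ is locally constant by Lemma \ref{locconstonF_lemma}. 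Subtracting these constant terms forces $\mathbb{T}_{\infty ,\tau }(f_{1}(\cdot ),t(e_{1}))$ to be locally constant near $z$, and openness of the polynomial $f_{1}$ propagates this to $\mathbb{T}_{\infty ,\tau }(\cdot ,t(e_{1}))\equiv c_{1}$ on an open neighborhood of $z_{1}:=f_{1}(z)\in J_{t(e_{1})}(S_{\tau })$. Iterating produces an admissible $\xi ^{\ast }=(f_{n},e_{n})\in X_{i}(S_{\tau })$, an orbit $z_{n}=f_{n}\circ \cdots \circ f_{1}(z)\in J_{t(e_{n})}(S_{\tau })$, and constants $c_{n}$.

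I then rule out $c\in \{0,1\}$. If $c=0$, non-negativity forces $\mathbb{T}_{\infty ,\tau }(f'(w),t(e'))=0$ for every $(e',f')$ with $i(e')=i$ and every $w\in U$, hence $\mathbb{T}_{\infty ,\tau }(\cdot ,t(e'))\equiv 0$ on the open set $f'(U)$; iterating over all admissible words from $i$ to $j$ gives $\mathbb{T}_{\infty ,\tau }(\cdot ,j)\equiv 0$ on the open set $\bigcup _{h\in H_{i}^{j}(S_{\tau })}h(U)$, which by Lemma \ref{lem-backwrdOrbit}(i) combined with Lemma \ref{cor-NoException} contains $J_{j}(S_{\tau })$. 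Because $J_{j}(S_{\tau })=\partial F_{j}(S_{\tau })$, continuity of $\mathbb{T}_{\infty ,\tau }$ together with its constancy on Fatou components propagates the value $0$ across every Fatou component, giving $\mathbb{T}_{\infty ,\tau }(\cdot ,j)\equiv 0$ on $\rs $ and contradicting $\mathbb{T}_{\infty ,\tau }(\infty ,j)=1$. The case $c=1$ is symmetric and contradicts $\mathbb{T}_{\infty ,\tau }(z_{j},j)=0$. Applying the same argument at each $z_{n}$ forces $c_{n}\in (0,1)$. The hardest part is thus excluding $c\in (0,1)$: the iteration then produces a bounded orbit $(z_{n})\subset \bigcup _{j}J_{j}(S_{\tau })\subset \mathbb{C}$ with $c_{n}\in (0,1)$ and no one-step contradiction. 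My approach is to extract a subsequence $z_{n_{k}}\to z^{\ast }$ with $t(e_{n_{k}})=j^{\ast }$ fixed, use continuity to obtain $c_{n_{k}}\to c^{\ast }\in [0,1]$, and then exploit density of backward orbits of non-exceptional Julia points (Lemma \ref{lem-backwrdOrbit} and Lemma \ref{cor-NoException}) to locate, arbitrarily close to $z$ inside $U$, preimages of points where $\mathbb{T}_{\infty ,\tau }(\cdot ,j^{\ast })$ differs from $c$ (such points exist because $\{0,1\}\subset \mathbb{T}_{\infty ,\tau }(\rs \times \{j^{\ast }\})$), and propagate the discrepancy through the unique-branch chain back to $U$ to contradict local constancy there.

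For the final image statement $\mathbb{T}_{\infty ,\tau }(J_{i}(S_{\tau })\times \{i\})=[0,1]$: continuity and connectedness of $\rs $ give $\mathbb{T}_{\infty ,\tau }(\rs \times \{i\})=[0,1]$ since the values $0$ and $1$ are attained; since $F_{i}(S_{\tau })$ has countably many connected components and $\mathbb{T}_{\infty ,\tau }(\cdot ,i)$ is constant on each, $\mathbb{T}_{\infty ,\tau }(F_{i}(S_{\tau })\times \{i\})$ is at most countable. Hence $\mathbb{T}_{\infty ,\tau }(J_{i}(S_{\tau })\times \{i\})$ is dense in $[0,1]$, and, being the continuous image of the compact set $J_{i}(S_{\tau })$, is compact and so equal to $[0,1]$.
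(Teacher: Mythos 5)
Your preliminary reductions, your treatment of the essentially deterministic case, the continuity clause via Lemma \ref{sepcond_lemma} and Proposition \ref{prop-tInfty}(iii), the exclusion of $c\in\{0,1\}$, and the final image argument are all sound (the countability-of-Fatou-components route to $\mathbb{T}_{\infty,\tau}(J_i(S_\tau)\times\{i\})=[0,1]$ is a valid alternative to the paper's). The gap is the case $c\in(0,1)$, which is the heart of the theorem and which you leave as an unexecuted sketch. The strategy you describe --- locate $u\in U$ and $h\in H_i^{j^*}(S_\tau)$ with $\mathbb{T}_{\infty,\tau}(h(u),j^*)\neq c$ and ``propagate the discrepancy back through the unique-branch chain'' --- does not work as stated: the $N$-step functional equation expresses $\mathbb{T}_{\infty,\tau}(u,i)$ as a weighted average over \emph{all} admissible words of length $N$, so a deviant value along the single branch $h$ can be compensated by the other branches and need not force $\mathbb{T}_{\infty,\tau}(u,i)\neq c$. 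Your forward iteration only yields local constancy along one orbit $(z_n)$, which need not cover $J_i(S_\tau)$, so no contradiction is reachable from it alone.

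The paper closes exactly this gap, uniformly in $c$ and with no case analysis, by running the propagation \emph{backward}. Assuming $\mathbb{T}_{\infty,\tau}(\cdot,i)$ is constant on a neighborhood $U_0$ of one point $z_0\in J_i(S_\tau)$, fix an \emph{arbitrary} $z\in J_i(S_\tau)$; by Lemma \ref{cor-NoException} and Lemma \ref{lem-backwrdOrbit} there exist $z'\in U_0$ and $h=\alpha_N\circ\cdots\circ\alpha_1\in H_i^i(S_\tau)$ with $h(z')=z$. Backward invariance keeps the partial orbit of $z'$ along the word of $h$ inside the Julia sets, so the backward separating condition sends $z'$ (and, after shrinking to $U'\subset U_0$, every nearby point) into the Fatou set under every \emph{other} admissible word of length $N$, where $\mathbb{T}_{\infty,\tau}$ is locally constant by Lemma \ref{locconstonF_lemma}. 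Writing $M_\tau^N\mathbb{T}_{\infty,\tau}=\mathbb{T}_{\infty,\tau}$ and letting $\tilde{A}$ be the cylinder over the word of $h$ (so $\tilde{\tau}_i(\tilde{A})>0$ because $\#\Gamma_e<\infty$), one gets for $z_1'\in U'$
$$\mathbb{T}_{\infty,\tau}(h(z_1'),i)=\bigl(\tilde{\tau}_i(\tilde{A})\bigr)^{-1}\Bigl(\mathbb{T}_{\infty,\tau}(z_1',i)-\text{(locally constant in }z_1'\text{)}\Bigr),$$
which is constant on $U'$; since $h$ is open, $\mathbb{T}_{\infty,\tau}(\cdot,i)$ is constant on the neighborhood $h(U')$ of $z$. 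As $z$ was arbitrary, $\mathbb{T}_{\infty,\tau}(\cdot,i)$ is locally constant on all of $\rs$, hence constant by connectedness, contradicting $K_i(S_\tau)\neq\emptyset$ together with $\mathbb{T}_{\infty,\tau}(\infty,i)=1$. You should replace your $c\in(0,1)$ sketch with this backward-density argument; it also subsumes your $c\in\{0,1\}$ cases.
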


\begin{proof}
First consider the case where $\# \bigcup_{i(e)=j} \Gamma_e =1$ for all $j \in V$.
Then it follows that $H_i^i(S_\tau)$ is a polynomial semigroup generated by a single map $h_i$, and hence
the smallest filled-in Julia set $K_i(S_\tau)$ is equal to the filled-in Julia set $K(h_i)$ of $h_i$, which is classically well known.
By definition, the function $\mathbb{T}_{\infty , \tau} (\cdot , i)$ is $0$ on  $K(h_i)$ and $1$ outside of $K(h_i)$.
Thus $\partial K(h_i) =J(h_i)=J_i(S_\tau)$ is equal to  the set of all points where $\mathbb{T}_{\infty, \tau}$  is not locally constant.
For the classical iteration theory, see \cite[\S 9]{miln06}.
(Remark: We denote by  $K(h)$ the set of all  $z \in \mathbb{C}$ for  which the orbit of $z$ under $h$ is bounded.
This set is called {\it filled} Julia set in \cite{miln06}.)

We next consider the case where there exist two edges  $e_1, e_2 \in E$ with the same initial vertex and two maps $f_1 \in \Gamma_{e_1}, f_2 \in \Gamma_{e_2}$ such that either  $e_1\neq e_2$ or  $f_1 \neq f_2$.
The proof is by contradiction.
Let $i \in V$ and 
suppose that  $\mathbb{T}_{\infty , \tau} (\cdot , i)$ is constant on a neighborhood  $U_0$  of  $z_0 \in J_i (S_\tau)$ in $\rs$.
Fix any  $z \in J_i (S_\tau)$.
By Lemma \ref{cor-NoException} and Lemma \ref{lem-backwrdOrbit}, we have  $J_i (S_\tau) = \overline{(H_i^i (S_\tau))^{-1}(z)}$. Thus there exists  $z' \in U_0 \cap (H_i^i (S_\tau))^{-1}(z)$, and hence there exists  $h \in H_i^i (S_\tau)$ such that  $h(z')=z$.
This $h$ can be written as $h= \alpha_N \circ \cdots \circ \alpha_1$, where  $( \alpha_n ,e_n)_{n=1}^{N}$ is an admissible finite sequence.
Set $\tilde{A}:= \prod_{n=1}^N ( \{ \alpha_n \} \times \{ e_n \}  ) \times \prod_{N+1}^{\infty} (\poly \times E)$.
Since  $\# \Gamma_e < \infty$, it follows that $\tilde{\tau}_i(\tilde{A}) >0$.
Now, for all $ \xi = (\gamma_n , e_n )_{n \in \nn} \in X_i (S_\tau) \setminus \tilde{A} =\tilde{A}^c$, we have $\xi _{N,1}(z' , i) \in \mathbb{F}(S_\tau)$ by the backward separating condition.
By Proposition \ref{prop-tInfty} and Lemma \ref{transop_lemma},
\begin{align*}
\mathbb{T}_{\infty,\tau} (z',i)&= \int_{X_i (S_\tau)} \mathbb{T}_{\infty,\tau}( \xi _{N,1}(z' , i) )\, {\rm d} \tilde{\tau}_i \\
& = \int_{\tilde{A}} \mathbb{T}_{\infty,\tau}( \xi _{N,1}(z' , i) )\, {\rm d} \tilde{\tau}_i +\int_{\tilde{A}^c}  \mathbb{T}_{\infty,\tau}( \xi _{N,1}(z' , i) )\, {\rm d} \tilde{\tau}_i \\
&=  \mathbb{T}_{\infty,\tau} (z , i)  \tilde{\tau}_i( \tilde{A}) + \int_{\tilde{A}^c}\mathbb{T}_{\infty,\tau}( \xi _{N,1}(z' , i) )\, {\rm d} \tilde{\tau}_i.
\end{align*}

We take a small neighborhood  $U' \subset U_0$ of $z'$ in $\rs$ so that  $\xi(z', i)$ and  $\xi(z_1', i)$ are contained in the same connected component of the Fatou set $\mathbb{F} (S_\tau)$ for all $z_1' \in U'$ and for all admissible sequences $\xi \neq ( \alpha_n ,e_n)_{n=1}^{N}$ with lengh  $N$. 
Then $h(U')$ is a neighborhood of  $h(z') =z$.
Take any $z_1 \in h(U')$ and take $z_1' \in U'$ so that  $h(z_1') =z_1$.
By our assumption that  $\mathbb{T}_{\infty , \tau} (\cdot , i)$ is constant on $ U_0$,
we have $\mathbb{T}_{\infty,\tau} (z',i) =\mathbb{T}_\infty (z_1',i)$.
Since $\mathbb{T}_{\infty,\tau}(\cdot,i)$ is constant on each connected component of $F_i (S_\tau)$ by Lemma \ref{locconstonF_lemma}, it follows  that 
\begin{align*}
\mathbb{T}_{\infty,\tau} (z,i) 
&= (\tilde{\tau}_i( \tilde{A}) )^{-1} \left( \mathbb{T}_{\infty,\tau} (z',i) - \int_{\tilde{A}^c}\mathbb{T}_{\infty,\tau}( \xi _{N,1}(z' , i) )\, {\rm d} \tilde{\tau}_i \right) \\
&= (\tilde{\tau}_i( \tilde{A}) )^{-1} \left( \mathbb{T}_{\infty,\tau} (z_1',i) - \int_{\tilde{A}^c}\mathbb{T}_{\infty,\tau}( \xi _{N,1}(z_1' , i) )\, {\rm d} \tilde{\tau}_i \right) \\
&=\mathbb{T}_{\infty,\tau} (z_1,i).
\end{align*}
Namely, $\mathbb{T}_{\infty,\tau} (\cdot,i)$ is constant on the neighborhood  $h(U')$  of  $z \in J_i (S_\tau)$.
It follows that $\mathbb{T}_{\infty, \tau}( \cdot , i)$ is locally constant at any point of $J_i(S_\tau)$.
However,  combining this with Lemma \ref{locconstonF_lemma}, we obtain that $\Bbb{T}_{\infty, \tau }(\cdot , i)$ is constant on $\hat{\Bbb{C}}$, which contradicts Lemma \ref{lem-filledEquiv}, irreducibility of $\tau $ and  the assumption that $K_j (S_\tau) \neq \emptyset$.
Thus $J_i(S_\tau)$ is equal to  the set of all points where $\mathbb{T}_{\infty, \tau}$  is not locally constant.

Moreover, the function $\Bbb{T}_{\infty ,\tau }$ is continuous on $\mathbb{Y}$
and $T_{\infty, \tau}$ is continuous on $\rs$ by Corollary \ref{cor-conti}.
Since
$\Bbb{T}_{\infty ,\tau }|_{K_{i}(S_\tau) \times \{ i\} }=0$ and
$\Bbb{T}_{\infty ,\tau }|_{\{\infty \}\times \{ i\}}=1$,
it follows that $\Bbb{T}_{\infty ,\tau }(\hat{\Bbb{C}}\times \{ i\} )=[0,1]$.
Further, since $\Bbb{T}_{\infty ,\tau }$ is locally constant on $F_{i}(S_{\tau })\times \{ i \} $, thus it follows that
$\Bbb{T}_{\infty ,\tau }(J_{i}(S_\tau)\times \{ i\} )=[0,1]$.
\end{proof}

Now we apply the results  to the following  random dynamical systems.
This is an immediate consequence of  Theorem \ref{theorem-Tinfty}, Corollary \ref{cor-cpt} and Proposition \ref{prop-intEmpty}.

 \begin{cor}\label{cor-foeExample}
For a given  $m \in \nn$, given $f_1, \dots,f_m \in \poly$ and a given irreducible stochastic matrix $P=(p_{ij})_{i,j = 1, \dots,m}$, we define  $\tau_{ij}$ as the  measure $p_{ij} \delta_{f_i}$, where $\delta_{f_i}$ denotes the Dirac measure at $f_i$.
Suppose that the polynomial GDMS $S_\tau$ induced by  $\tau = (\tau_{ij})$ satisfies
$K_i (S_\tau) \neq \emptyset$ for some  $i\in V$ and  $J_i (S_\tau)\cap J_j(S_\tau) = \emptyset$ for all  $i,j \in V$ with $i\neq j$.
Then  ${\rm int} (J(S_{\tau }))=\emptyset $ and the Julia set $J(S_\tau)$ is equal to  the set of all points where ${T}_{\infty, \tau}$  is not locally constant.
If, in addition to the assumption above, there exist $i,j,k \in \{1, \dots , m \}$ with $j\neq k$ such that $p_{ij} >0$ and $p_{ik} >0$ , 
then $T_{\infty,\tau}$ is continuous on $\rs$.
\end{cor}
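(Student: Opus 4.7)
The plan is to verify that $S_{\tau }$ falls into the scope of Corollary \ref{cor-cpt}, Proposition \ref{prop-intEmpty}, and Theorem \ref{theorem-Tinfty}, and then to promote the per-vertex conclusions of those results to statements about the global Julia set $J(S_{\tau })$ and the averaged function $T_{\infty ,\tau }$ by exploiting the disjointness hypothesis $J_{i}(S_{\tau })\cap J_{j}(S_{\tau })=\emptyset $.

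First I would check the hypotheses. Since $\tau _{ij}=p_{ij}\delta _{f_{i}}$, we have $\Gamma _{e}=\{ f_{i(e)}\} $ whenever $p_{ij}>0$, so $\#\Gamma _{e}=1$ and $S_{\tau }$ is irreducible because $P$ is. For the backward separating condition, given two edges $e_{1},e_{2}$ with common initial vertex $i$, the only available maps are $f_{1}=f_{2}=f_{i}$, so I need $f_{i}^{-1}(J_{t(e_{1})}(S_{\tau }))\cap f_{i}^{-1}(J_{t(e_{2})}(S_{\tau }))=\emptyset $ when $e_{1}\neq e_{2}$; this reduces to $J_{t(e_{1})}(S_{\tau })\cap J_{t(e_{2})}(S_{\tau })=\emptyset $, which is exactly our disjointness hypothesis. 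Corollary \ref{cor-cpt} then gives that each $J_{i}(S_{\tau })$ is a compact subset of $\mathbb{C}$, in particular $J_{i}(S_{\tau })\neq \rs $; combined with Proposition \ref{prop-intEmpty} this forces $\mathrm{int}(J_{i}(S_{\tau }))=\emptyset $ for every $i$. Because the $J_{i}(S_{\tau })$ are pairwise disjoint closed sets, a short topological argument (any open set inside the union and meeting $J_{i_{0}}$ can be shrunk by removing the finitely many closed sets $J_{k}$, $k\neq i_{0}$) gives $\mathrm{int}(J(S_{\tau }))=\bigcup _{i}\mathrm{int}(J_{i}(S_{\tau }))=\emptyset $.

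Next, using that $K_{i}(S_{\tau })\neq \emptyset $ for some $i$, Theorem \ref{theorem-Tinfty} yields that $J_{i}(S_{\tau })$ is precisely the set of points where $\mathbb{T}_{\infty ,\tau }(\cdot ,i)$ is not locally constant, for every $i\in V$. To transfer this to $T_{\infty ,\tau }=\sum _{k}p_{k}\mathbb{T}_{\infty ,\tau }(\cdot ,k)$, I would argue both inclusions. If $z\notin J(S_{\tau })$, then $z\in F(S_{\tau })=\bigcap _{k}F_{k}(S_{\tau })$ and Lemma \ref{locconstonF_lemma} makes each $\mathbb{T}_{\infty ,\tau }(\cdot ,k)$, hence $T_{\infty ,\tau }$, locally constant at $z$. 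Conversely, if $z\in J_{i}(S_{\tau })$ for some $i$, then by the disjointness hypothesis $z\in F_{k}(S_{\tau })$ for every $k\neq i$, so Lemma \ref{locconstonF_lemma} makes every $\mathbb{T}_{\infty ,\tau }(\cdot ,k)$ with $k\neq i$ locally constant at $z$; in a small neighborhood of $z$ one has $T_{\infty ,\tau }=p_{i}\,\mathbb{T}_{\infty ,\tau }(\cdot ,i)+c$ for some constant $c$. Since $p_{i}>0$ and $\mathbb{T}_{\infty ,\tau }(\cdot ,i)$ is not locally constant at $z$ by Theorem \ref{theorem-Tinfty}, neither is $T_{\infty ,\tau }$.

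Finally, for the continuity clause: the extra assumption produces indices $i,j,k$ with $j\neq k$ and $p_{ij},p_{ik}>0$, giving two distinct edges $(i,j)\neq (i,k)$ with common initial vertex $i$. By definition this makes $S_{\tau }$ essentially non-deterministic, and so the last assertion of Theorem \ref{theorem-Tinfty} delivers continuity of $T_{\infty ,\tau }$ on $\rs $. The only non-routine step is the passage from $\mathbb{T}_{\infty ,\tau }(\cdot ,i)$ to $T_{\infty ,\tau }$ via disjointness; the remainder is a matter of checking that every hypothesis of the quoted results is satisfied in this concrete setup.
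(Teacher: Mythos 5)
Your proposal is correct and follows essentially the same route as the paper: the paper's proof is a one-line citation of Corollary \ref{cor-cpt}, Proposition \ref{prop-intEmpty} and Theorem \ref{theorem-Tinfty}, and you have simply (and accurately) filled in the hypothesis checks --- in particular the reduction of the backward separating condition to the disjointness of the $J_{t(e_1)}(S_\tau)$, $J_{t(e_2)}(S_\tau)$ --- together with the transfer from $\mathbb{T}_{\infty,\tau}(\cdot,i)$ to $T_{\infty,\tau}$ via Lemma \ref{locconstonF_lemma} and $p_i>0$. No gaps.
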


\begin{example}\label{ex-main}
Let  $g_1(z) =z^2 -1,g_2(z)=z^2/4$ and set 
  $$m=2, \, f_i =g_i \circ g_i \, (i=1,2) \text{ and }  P =
\left(
    \begin{array}{cc}
      p_{11} & p_{12}  \\
      p_{21} & p_{22} \\
    \end{array}
  \right) = 
 \left(
    \begin{array}{cc}
      \frac{1}{2} &  \frac{1}{2} \\
      1 & 0 \\  
    \end{array}       \right).$$
Define $\tau_{ij}=p_{ij} \delta_{f_i}$ and $\tau = (\tau_{ij})$.
The polynomial GDMS  $S_\tau$ satisfies all the assumption of Corollary \ref{cor-foeExample}.

Figure \ref{fig-side} illustrates the grapf of $1-T_{\infty,\tau}$, which represents the probability of NOT tending to $\infty$.
The function $1-T_{\infty,\tau}$ is continuous on $\rs$ and varies on the Julia set  $J(S_\tau)$. 
Figure \ref{fig-top} illustrates the image of Figure \ref{fig-side} viewed from the top.
The Julia set  $J(S_\tau)$ is illustrated as the set of all points where the color varies.
\begin{figure}[h]
\center
\includegraphics[width=8cm]{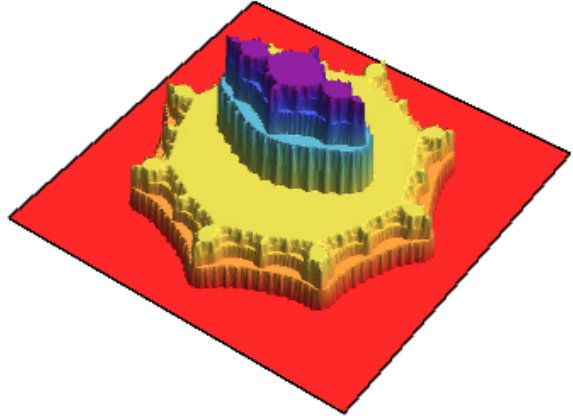}
 \caption{The graph of $1-T_{\infty, \tau}$ in Example \ref{ex-main}.
}
\label{fig-side}
\end{figure}

Figure \ref{ex-Julia} illustrates the Julia set $J (S_\tau)$.
The Julia set  contains neither  isolated points nor  interior points.

\begin{figure}[htbp]
\begin{minipage}{80truemm}
\begin{center}
\includegraphics[width=6cm, angle=90]{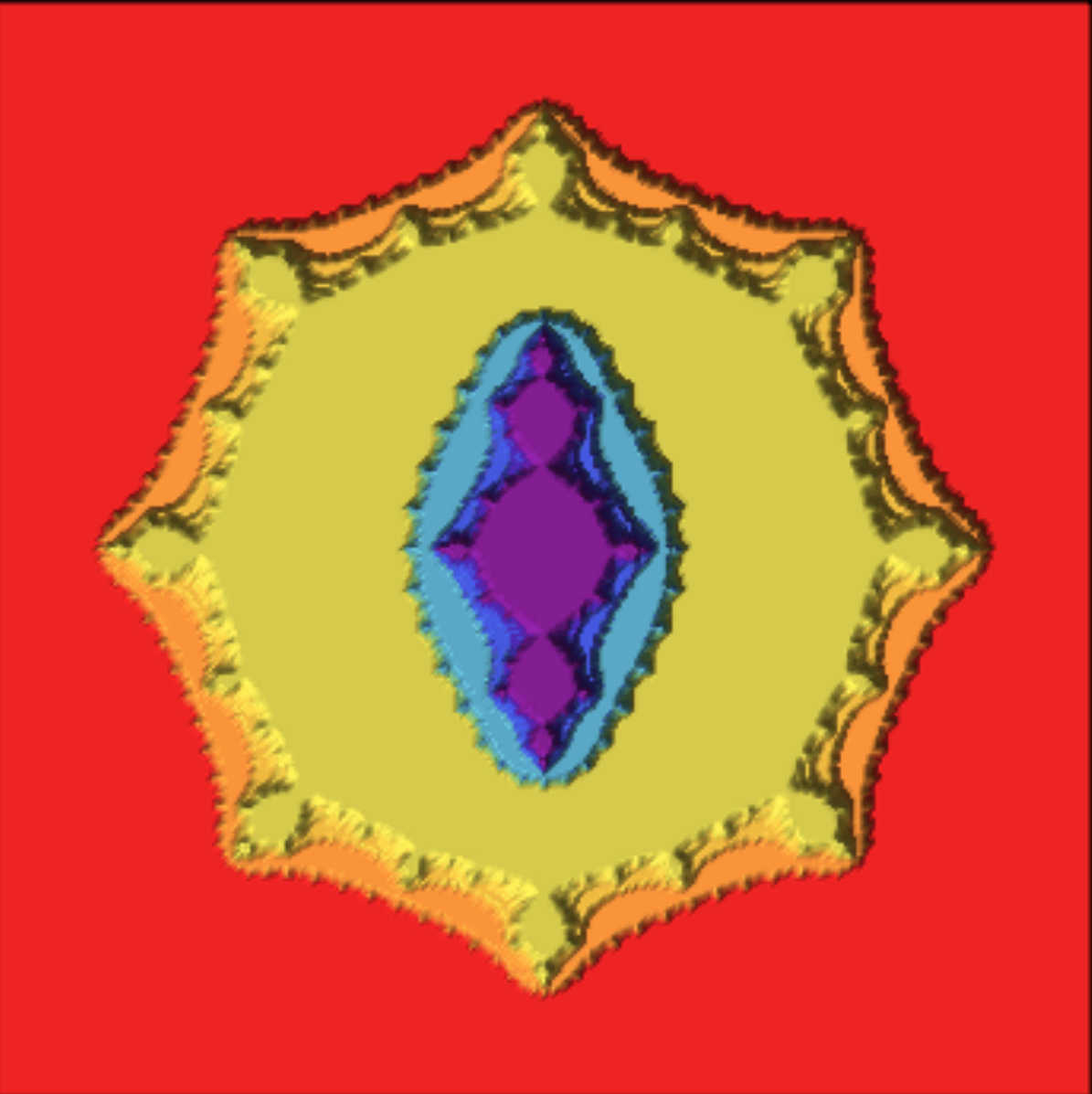}
\end{center}
 \caption{The graph of $1-T_{\infty, \tau}$ in Example \ref{ex-main}  viewed from above.
}
\label{fig-top}
\end{minipage}
\begin{minipage}{80truemm}
  \begin{center}
 \includegraphics[width=5cm]{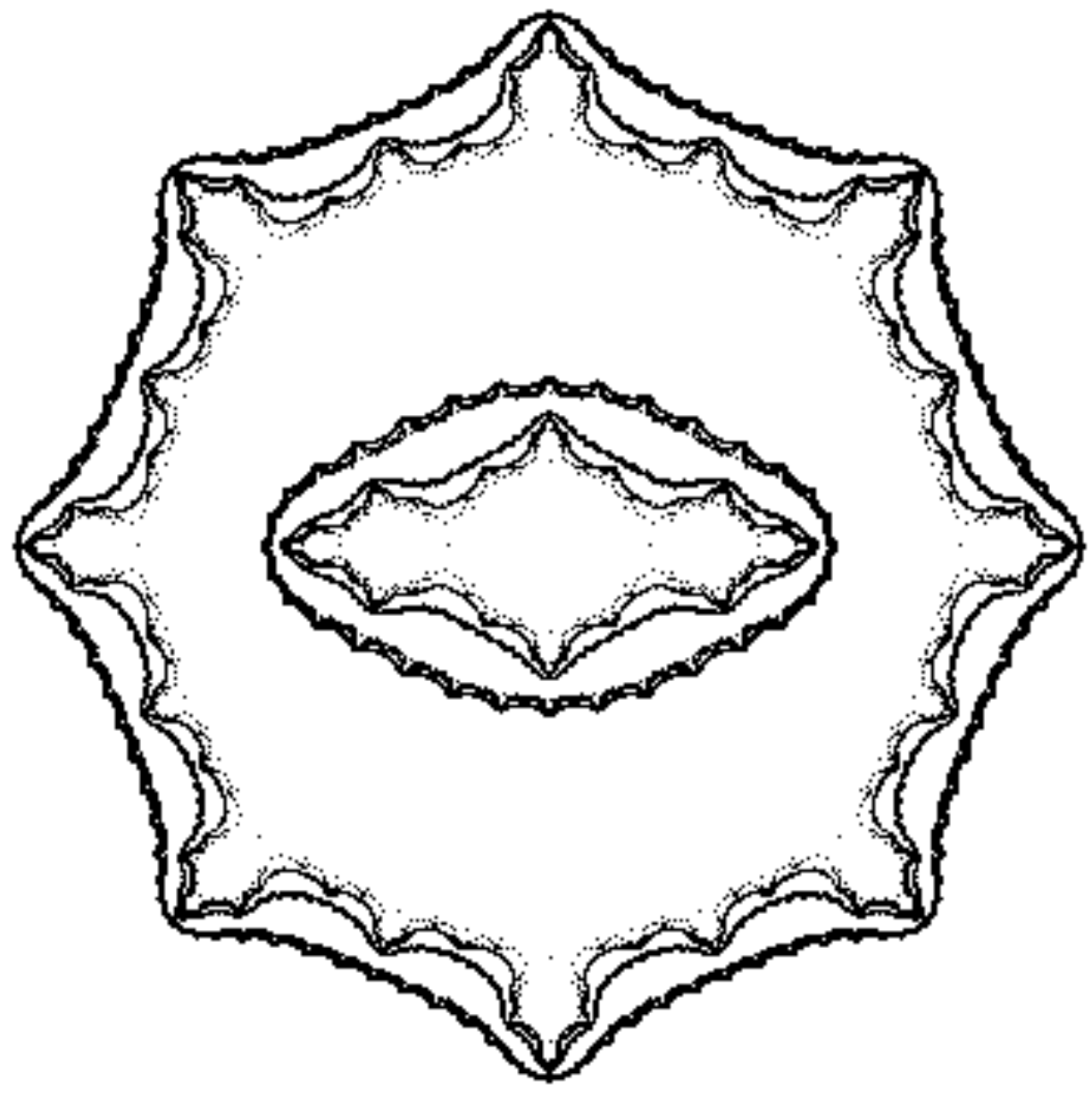}
 \end{center}
 \caption{The Julia set $J(S_\tau)$ for Example \ref{ex-main}.}
   \label{ex-Julia}
  \end{minipage}
\end{figure}

It follows from \cite[Example 6.2]{sumi11} that the Hausdorff dimension of the Julia set $J(S_\tau)$ is strictly less than $2$ for this example.
\end{example}

\vspace{30mm}

\end{document}